\theoremstyle{plain}
\newtheorem{corollary}{Corollary}
\newtheorem{lemma}{Lemma}
\newtheorem{notation}{Notation}
\newtheorem{proposition}{Proposition}
\newtheorem{remark}{Remark}
\numberwithin{equation}{section}
\begin{document}
\title{Operator inequalities and characterizations}
\author{A. Seddik}
\address{Department of Mathematics, Faculty of Mathematics and Computer
Science, University of Batna 2, Batna, Algeria.}
\email{a.seddik@univ-batna2.dz}
\urladdr{http://staff.univ-batna2.dz/seddik\_ameur/home?admin\_panel=}
\subjclass{47A30, 47A05, 47B15.}
\keywords{Unitary operator, normal operator, selfadjoint operator,
arithmetic-geometric mean inequality.}

\begin{abstract}
Let $\mathfrak{B}(H)$ be the C*-algebra of all bounded linear operators
acting on a complex separable Hilbert space $H$. In this survey, we shall
present characterizations of some distinguished classes of $\mathfrak{B}(H)$
(namely, normal operators, selfadjoint operators, and unitary operators) in
terms of operator inequalities related to the arithmetic-geometric mean
inequality.

For the class of all normal operators, we shall present new four general
characterizations given as follows: {}

$%
\begin{array}{ccc}
(i) & \left\vert S^{2}\right\vert =\left\vert S\right\vert ^{2},\ \left\vert
S^{\ast 2}\right\vert =\left\vert S^{\ast }\right\vert ^{2}, & (S\in 
\mathfrak{B}(H)), \\ 
(ii) & \left\vert S^{2}\right\vert ^{2}\geq \left\vert S\right\vert ^{4},\
\left\vert S^{\ast 2}\right\vert ^{2}\geq \left\vert S^{\ast }\right\vert
^{4}, & (S\in \mathfrak{B}(H)), \\ 
(iii) & S\ \text{and\ }S^{\ast }\text{ belong to class }\mathbf{A,} & (S\in 
\mathfrak{B}(H)), \\ 
(iv) & S\ \text{and\ }S^{\ast }\text{ are paranormal,} & (S\in \mathfrak{B}%
(H)).%
\end{array}%
$

Note that the characterization $(iv)$ was given by Ando (see $[1]$) but with
the kernel assumption $\ker S=\ker S^{\ast }$.
\end{abstract}

\maketitle

\section{\protect\bigskip Introduction and preliminaries}

Let $\mathfrak{B}(H)$ be the C*-algebra of all bounded linear operators
acting on a complex separable Hilbert space $H$.

We denote by:

\begin{enumerate}
\item[$\bullet $] $u\otimes v$ (where $u,\ v\in H)$, the operator of rank
less or equal to one on $H$ defined by $\left( u\otimes v\right)
x=\left\langle x,v\right\rangle u$, for every $x\in H,$

\item[$\bullet $] $\mathcal{F}_{_{1}}(H)=\left\{ x\otimes y:x,\ y\in
H\right\} $, the set of all operators of rank less or equal to one on $H,$

\item[$\bullet $] $\left\vert S\right\vert ,$ the positive square root of
the positive operator $S^{\ast }S$ (where $S\in \mathfrak{B}(H)),$

\item[$\bullet $] $\left\{ S\right\} ^{^{\prime }}$and$\ \left\{ S\right\}
^{^{\prime \prime }}$, the commutant\ and the bicommutant of $S,$
respectively (where $S\in \mathfrak{B}(H)),$

\item[$\bullet $] $\left( M\right) _{_{1}}=\left\{ x\in M:\left\Vert
x\right\Vert =1\right\} $, for $M$ be a subset of some normed space,

\item[$\bullet $] $K\circ L=\left\{ \sum_{i=1}^{n}\alpha _{_{i}}\beta
_{_{i}}:(\alpha _{_{1}},...,\alpha _{_{n}})\in K,\ (\beta _{_{1}},...,\beta
_{_{n}})\in L\right\} $, for $L,\ K\subset \mathbb{C}^{n}$, $n\geq 1$,

\item[$\bullet $] $\left\vert \Gamma \right\vert =\underset{\gamma \in
\Gamma }{\sup }\left\vert \gamma \right\vert $, where $\Gamma $ is a bounded
subset of the field of scalars,

\item[$\bullet $] $\Gamma M=\left\{ \lambda m:\lambda \in \Gamma ,\ m\in
M\right\} $, where $M$ is a subspace of some vector space, and $\Gamma $ is
a subset of the field of scalars.
\end{enumerate}

For $S,$\ $T\in \mathfrak{B}(H)$:

\begin{enumerate}
\item[$\bullet $] we say that $S$ and $T$ are unitarily equivalent, if there
exists a unitary operator $U\in \mathfrak{B}(H)$ such that $S=U^{\ast }TU$,

\item[$\bullet $] $S$ is paranormal if, $\left\Vert x\right\Vert \left\Vert
S^{2}x\right\Vert \geq \left\Vert Sx\right\Vert ^{2}$, for every $x\in H$,

\item[$\bullet $] we say that $S$ belongs to class $\mathbf{A}$, if $%
\left\vert S^{2}\right\vert \geq \left\vert S\right\vert ^{2},$

\item[$\bullet $] if $S\geq 0,\ T\geq 0$, and $S\geq T$, then $S^{\alpha
}\geq T^{\alpha },$ for every $\alpha \in \lbrack 0,1]$ (L\u{o}wner-Heinz
inequality ($[8]$),

\item[$\bullet $] if $S$ belongs to class $\mathbf{A,}$ then it is
paranormal (see $[6]$).
\end{enumerate}

If $\mathcal{A}$ is a (real or complex) unital normed algebra, and $A\in 
\mathcal{A}$, then

\begin{enumerate}
\item[$\bullet $] we denote by $\sigma (A)$ and $r(A)$, the spectrum and the
spectral radius of $A$, respectively,

\item[$\bullet $] we denote by $V(A)$ and $w(A)$, the algebraic numerical
range and the numerical radius of $A$, respectively,

\item[$\bullet $] $A$ is called convexoid if $V(A)=co\sigma (A)$,

\item[$\bullet $] if $\mathcal{A}=\mathfrak{B}(H)$, then $V(A)=\overline{W(A)%
}$ (where $\overline{W(A)}$ is the closure of the usual numerical range of $%
A $).
\end{enumerate}

For $S\in \mathfrak{B}(H)$, let $R(S)$ and $\ker S$ denote the range and the
kernel of $S$, respectively.

It is known that for $S\in \mathfrak{B}(H)$,

\begin{enumerate}
\item[$\bullet $] then $S$ is of closed range if and only if there exits an
operator $S^{+}\in \mathcal{R}(H)$ satisfying the four following equations%
\begin{equation*}
SS^{+}S=S,\ S^{+}SS^{+}=S^{+},\ (SS^{+})^{\ast }=SS^{+},\ (S^{+}S)^{\ast
}=S^{+}S,
\end{equation*}

\item[$\bullet $] the operator $S^{+}$ if exists is unique, and it is called
the Moore-Penrose inverse of $S$, and it satisfies that $SS^{+}$ and $S^{+}S$
are orthogonal projections onto $R(S)$ and $R(S^{\ast })$, respectively,

\item[$\bullet $] if $S$ is invertible, then $S^{+}=S^{-1}$, and if $S\in 
\mathfrak{B}(H)$ is a surjective operator (resp. injective with closed
range), then $SS^{+}=I$ (rep. $S^{+}S=I).$
\end{enumerate}

For every $S$ in $\mathfrak{B}(H)$ with closed range:

\begin{enumerate}
\item[$\bullet $] we associate the $2\times 2$ matrix representation $S=%
\left[ 
\begin{array}{cc}
S_{_{1}} & S_{_{2}} \\ 
0 & 0%
\end{array}%
\right] $ with respect to the orthogonal direct sum $H=R(S)\oplus \ker
S^{\ast }$,

\item[$\bullet $] the operator $S$ is called an EP operator if $R(S^{\ast
})=R(S)$, or equivalently $S_{_{2}}=0$ and $S_{_{1}}$ is invertible; in this
case $S^{+}=\left[ 
\begin{array}{cc}
S_{_{1}}^{-1} & 0 \\ 
0 & 0%
\end{array}%
\right] $,

\item[$\bullet $] if $S$ is normal, then it is an EP operator.
\end{enumerate}

My main purpose of this survey paper is to present our characterizations
(presented in several papers) of some distinguished classes of $\mathfrak{B}%
(H)$, namely, the selfadjoint operators, the normal operators, and the
unitary operators, in terms of operator inequalities. Our idea is to make a
connection between some operator inequalities related to the known
arithmetic-geometric mean inequality and some remarkable classes of
operators in $\mathfrak{B}(H)$.

\textbf{Part 1. Selfadjoint operators and }$\mathbf{S-AGMI}$\textbf{.}

In $[8]$, Heinz proved that for every two positive operators $P$ and $Q$ in $%
\mathfrak{B}(H)$, and for every $\alpha \in \lbrack 0,1]$, the following
operator inequality holds%
\begin{equation}
\forall X\in \mathfrak{B}(H),\ \left\Vert PX+XQ\right\Vert \geq \left\Vert
P^{\alpha }XQ^{1-\alpha }+P^{1-\alpha }XQ^{\alpha }\right\Vert .  \tag{$HI$}
\end{equation}

As a particular case of this, for $\alpha =\frac{1}{2}$, is the well known
arithmetic-geometric mean inequality given by 
\begin{equation}
\forall A,B,X\in \mathfrak{B}(H),\ \left\Vert A^{\ast }AX+XBB^{\ast
}\right\Vert \geq 2\left\Vert AXB\right\Vert .  \tag{$S-AGMI$}
\end{equation}

Note that the proof of $(HI)$ given by Heinz is somewhat complicated. For
this reason, McIntosh $[11]$ with an elegant proof, proved that the operator
inequality $(S-AGMI)$ holds, and deduced from it the Heinz inequality.

Independently of the work of Heinz and McIntosh, Corach et al. proved in $%
[4] $, that for every invertible selfadjoint operator $S$ in $\mathfrak{B}%
(H) $, the following inequality holds%
\begin{equation}
\forall X\in \mathfrak{B}(H),\ \left\Vert SXS^{-1}+S^{-1}XS\right\Vert \geq
2\left\Vert X\right\Vert ,  \tag{$S1$}
\end{equation}

In $[5,\ 1993]$, Fujji et al. had proved that the three above inequalities
are mutually equivalent, and in $[3,2018]$, it was presented other operator
inequalities that are also equivalent to $(S-AGMI),$ and here we cite two of
them given by:%
\begin{equation}
\forall X\in \mathfrak{B}(H),\ \left\Vert SXS^{+}+S^{+}XS\right\Vert \geq
2\left\Vert SS^{+}XS^{+}S\right\Vert ,  \tag{$S2$}
\end{equation}%
for every selfadjoint operator with closed range $S\in $ $\mathfrak{B}(H),$%
\begin{equation}
\forall X\in \mathfrak{B}(H),\ \left\Vert S^{2}X+XS^{2}\right\Vert \geq
2\left\Vert SXS\right\Vert ,  \tag{$S3$}
\end{equation}%
for every selfadjoint operator $S\in $ $\mathfrak{B}(H).$

Note that $(S2)$ (resp. $(S3)$) is a general form of the Corach-Porta-Recht
inequality from the invertible case to closed range case (resp. the general
situation). This first family of operator inequalities $(S1),\ (S2),\ $and $%
(S3)$ that are equivalent to $(S-AGMI)$ is generated by a selfadjoint
operator (invertible, with closed range, and any).

In $[5]$, Fujji et al. had shown that $\left( S1\right) $ holds with an easy
proof, this gives us an easier proof of Heinz inequality. Without forgetting
that these three inequalities have been generalized from the usual norm to
unitarily invariant norms. For us, we have followed a different way in
introducing in $[12,2001]$ our first characterization of the class of all
invertible operators $S\in \mathfrak{B}(H)$ satisfying the operator
inequality $(S1)$. We have showed that this class is exactly the class of
all invertible selfadjoint operators in $\mathfrak{B}(H)$ multiplied by
nonzero scalars.

Following this kind of problem from the invertible case to the closed range
case, and to the general situation, we have asked to finding the class:

$\mathbf{(i)}\ $of all operators with closed ranges $S\in \mathfrak{B}(H)$
satisfying $(S2),$

$\mathbf{(ii)}$ of all operators $S\in \mathfrak{B}(H)$ satisfying $(S3)$.

We have showed that the class

\begin{enumerate}
\item[$\bullet $]  $\mathbf{(i)}$ is exactly the class of all selfadjoint
operators with closed ranges in $\mathfrak{B}(H)$ multiplied by nonzero
scalars (see $[18,2015]$),

\item[$\bullet $] $\mathbf{(ii)}$ is exactly the class of all selfadjoint
operators in $\mathfrak{B}(H)$ multiplied by nonzero scalars (see $[20,2019]$%
).
\end{enumerate}

\textbf{Part 2. Normal operators and }$\mathbf{N-AGMI}$\textbf{.}

We consider a\ second version of the arithmetic-geometric mean inequality
which follows immediately from $(S-AGMI)$ given as follows:%
\begin{equation}
\forall A,B,X\in \mathfrak{B}(H),\ \left\Vert A^{\ast }AX\right\Vert
+\left\Vert XBB^{\ast }\right\Vert \geq 2\left\Vert AXB\right\Vert 
\tag{$N-AGMI$}
\end{equation}

In $[3,2018]$, we have showed that $(N-AGMI)$ is equivalent to each of the
three following inequalities:

\begin{equation}
\forall X\in \mathfrak{B}(H),\ \left\Vert SXS^{-1}\right\Vert +\left\Vert
S^{-1}XS\right\Vert \geq 2\left\Vert X\right\Vert .  \tag{$N1$}
\end{equation}%
for every invertible normal operator $S\in \mathfrak{B}(H),$

\begin{equation}
\forall X\in \mathfrak{B}(H),\ \left\Vert SXS^{+}\right\Vert +\left\Vert
S^{+}XS\right\Vert \geq 2\left\Vert SS^{+}XS^{+}S\right\Vert ,  \tag{$N2$}
\end{equation}%
for every normal operator with closed range $S\in \mathfrak{B}(H),$%
\begin{equation}
\forall X\in \mathfrak{B}(H),\ \left\Vert S^{2}X\right\Vert +\left\Vert
XS^{2}\right\Vert \geq 2\left\Vert SXS\right\Vert ,  \tag{$N3$}
\end{equation}%
for every normal operators $S\in \mathfrak{B}(H).$

Note that the inequality $(N2)$ (resp. $(N3)$) is a general form of $(N1)$
from the invertible case to the closed range case (resp. the general
situation). In $[3,2018]$, we have proved $(N-AGMI)$ independently to $%
(S-AGMI)$.

This second family of operator inequalities $(N1),\ (N2),\ $and $(N3)$ that
are equivalent to $(N-AGMI)$ is generated by a normal operator (invertible,
with closed range, and any).

As we have done for the characterizations with the above first family, also
it is interesting to describe the largest class

$\mathbf{(i)}\ $of all invertible operators $S\in \mathfrak{B}(H)$
satisfying $(N1),$

$\mathbf{(ii)}\ $of all operators with closed ranges $S\in \mathfrak{B}(H)$
satisfying $(N2),$

$\mathbf{(iii)}$ of all operators $S\in \mathfrak{B}(H)$ satisfying $(N3)$.

We have showed that the class

\begin{enumerate}
\item[$\bullet $] $\mathbf{(i)}$ is exactly the class of all invertible
normal operators in $\mathfrak{B}(H)$ (see $[15,2009]$),

\item[$\bullet $] $\mathbf{(ii)}$ is exactly the class of all normal
operators with closed ranges in $\mathfrak{B}(H)$ (see $[18,2015]$),

\item[$\bullet $] $\mathbf{(iii)}$ is exactly the class of all normal
operators in $\mathfrak{B}(H)$ (see $[20,2019]$).
\end{enumerate}

In this work, we shall present four new general characterizations of the
class of all normal operators in $\mathfrak{B}(H),$ that are given as
follows:

$\mathbf{(i)\ }\left\vert S^{2}\right\vert =\left\vert S\right\vert ^{2},\
\left\vert S^{\ast 2}\right\vert =\left\vert S^{\ast }\right\vert ^{2},\ \
(S\in \mathfrak{B}(H)),$

$\mathbf{(ii)}\ \left\vert S^{2}\right\vert ^{2}\geq \left\vert S\right\vert
^{4},\ \left\vert S^{\ast 2}\right\vert ^{2}\geq \left\vert S^{\ast
}\right\vert ^{4},\ \ \ (S\in \mathfrak{B}(H)),$

$\mathbf{(iii)}\ S$ and $S^{\ast }$ belong to class $\mathbf{A,}$ \ \ $(S\in 
\mathfrak{B}(H)),$

$\mathbf{(iv)}\ S$ and $S^{\ast }$ are paranormal, \ \ $(S\in \mathfrak{B}%
(H)).$

Note that the characterization $\mathbf{(iv)}$ was given by T. Ando $[1,\
1972]$, but with the kernel assumption $\ker S=\ker S^{\ast }$.

\textbf{Part 3. Unitary operators.}

Let $S$ be an invertible operator in $\mathfrak{B}(H)$.

It is clear that 
\begin{equation*}
\underset{\left\Vert X\right\Vert =1}{\inf }\left\Vert
SXS^{-1}+S^{-1}XS\right\Vert \leq 2\leq \underset{\left\Vert X\right\Vert =1}%
{\sup }\left\Vert SXS^{-1}+S^{-1}XS\right\Vert .
\end{equation*}

Then from the above first part with the invertible case, the above infimum
gets its maximal value $2$ if and only if $S$ is an invertible selfadjoint
operator in $\mathfrak{B}(H)$ multiplied by a nonzero scalar.

So, what about the operator $S$ for which the above supremum gets its
minimal value $2$? It was proved that:

$\mathbf{(i)}$ this supremum gets its minimal value $2$ if and only if $S$
is a unitary operator multiplied by a nonzero scalar (see $[16,\ 2011]$),

$\mathbf{(ii)}\ \underset{\left\Vert X\right\Vert =1=rankX}{\sup }\left\Vert
SXS^{-1}+S^{-1}XS\right\Vert \geq 2$ (see $[15,\ 2009]$),

$\mathbf{(iii)}$ this last supremum gets its minimal value $2$ if and only
if $S$ is normal and $\underset{\lambda ,\mu \in \sigma (S)}{\sup }%
\left\vert \frac{\lambda }{\mu }+\frac{\mu }{\lambda }\right\vert =2$ (see $%
[16,\ 2011]$).

From $\mathbf{(i)}$, the class of all unitary operators in $\mathfrak{B}(H)$
multiplied by nonzero scalars is exactly the class of all invertible
operators $S\in $ $\mathfrak{B}(H)$ satisfying the following operator
inequality: 
\begin{equation*}
\forall X\in \mathfrak{B}(H),\ \left\Vert SXS^{-1}+S^{-1}XS\right\Vert \leq
2\left\Vert X\right\Vert .
\end{equation*}

From $\mathbf{(iii)}$, the class of all invertible normal operators $S\in 
\mathfrak{B}(H)$ for which $\underset{\lambda ,\mu \in \sigma (S)}{\sup }%
\left\vert \frac{\lambda }{\mu }+\frac{\mu }{\lambda }\right\vert =2$ is
exactly the class of all invertible operators $S\in \mathfrak{B}(H)$
satisfying the following operator inequality: 
\begin{equation*}
\forall X\in \mathcal{F}_{_{1}}(H),\ \left\Vert SXS^{-1}+S^{-1}XS\right\Vert
\leq 2\left\Vert X\right\Vert .
\end{equation*}

This second class contains strictly the first class and contained strictly
in the class of all invertible normal operators in $\mathfrak{B}(H)$.

\textbf{Part 4. Some comments.}

Note that for the characterization concerning $(S2)$ (resp. $(N2)$) with the
closed range case is deduced from the characterization of $(S1)$ (resp. $%
(N1) $), and the characterization concerning $(S3)$ (resp. $(N3)$) with the
general situation is deduced from the characterization of $(S2)$ (resp. $%
(N2) $).

Unfortunately, after the publication of the paper $[18]$ concerning the
closed range case, we have found a mistake in $Lemma\ 1$, and all results
concerning the closed range case (for the two above parts) depend on it. So,
in the corrigendum $[19,\ 2017]$, we have presented a corrected proof of
this lemma. Note that in the proof of this corrected lemma, we have used the 
$Theorem\ 3.6$ of $[12]$, where one of the conditions of this theorem is an
equality between spectrum of two positive operators. This condition is
enough for the invertible case only, but not suffice for non-invertible case
and our lemma is for non-invertible case. But, to have a complete proof of
the lemma, we need $Theorem\ 6.3$ with inclusion between spectrum instead of
equality. We have mentioned in the proof of the corrected lemma that $%
Theorem\ 6.3$ remains true with inclusion between spectrum but without
argument. In this survey, we shall present this argument.

This work gives to the separately published results a certain harmony and
concordance. The starting point in this work will be the largest class of
normal operators, then their two subclasses of self-adjoint operators and
unitary operators, and will end with the intersection of these two last
subclasses, namely, the class of unitary reflections. We will then notice
that each class will be linked to an operator inequality and how their forms
vary from one to the other.

In section 2, we shall present a family of operator inequalities that are
equivalent to $(N-AGMI),\ $and the characterizations cited above in the part
2 concerning the normal operators.

In section 3, we shall present a family of operator inequalities that are
equivalent to $(S-AGMI),$ and the characterizations cited above in the part
1 concerning the selfadjoint operators.

In section 4, we shall present some results concerning the injective norm of
the two following operators on $\mathfrak{B}(H)$: 
\begin{equation*}
X\rightarrow SXS^{-1}+S^{-1}XS,\ \ X\rightarrow S^{\ast
}XS^{-1}+S^{-1}XS^{\ast }
\end{equation*}%
(where $S$ be an invertible operator in $\mathfrak{B}(H)$), and some
characterizations that are cited in the part 3 and others.

\section{N-Arithmetic-Geometric-Mean Inequality, Normal operators, and
Characterizations}

In this section, we shall present some characterizations of the class of all
normal operators in $\mathfrak{B}(H)$ in terms of operator inequalities, and
also its two subclasses of all invertible normal operators and all normal
operators with closed ranges in $\mathfrak{B}(H).$ These operator
inequalities are related to the N-Arithmetic-Geometric-Mean Inequality which
will be introduced in the first proposition.

We start with the following remark which contains two trivial
characterizations of the class of all normal operators in $\mathfrak{B}(H)$.

\begin{remark}
Let $S\in \mathfrak{B}(H)$. It is easy to see that the three following
properties are equivalent:

$(i)$ $S$ is normal,

$(ii)\ \forall X\in \mathfrak{B}(H),\ \left\Vert S^{\ast }X\right\Vert
=\left\Vert SX\right\Vert ,$

$(iii)\ \forall X\in \mathfrak{B}(H),\ \left\Vert XS^{\ast }\right\Vert
=\left\Vert XS\right\Vert .$
\end{remark}

In this section, we consider the N-Arithmetic-Geometric Mean Inequality
given by: 
\begin{equation}
\forall A,B,X\in \mathfrak{B}(H),\ \left\Vert A^{\ast }AX\right\Vert
+\left\Vert XBB^{\ast }\right\Vert \geq 2\left\Vert AXB\right\Vert . 
\tag{$N-AGMI$}
\end{equation}

This inequality follows immediately from the known Arithmetic-Geometric-Mean
Inequality (which is called here $\left( S-AGMI\right) $). In the next
proposition, we present a family of operator inequalities generated by
normal operators that are equivalent to the $\left( N-AGMI\right) $, and \
we shall prove $\left( N-AGMI\right) $ independently on $\left(
S-AGMI\right) $.

\begin{proposition}
$[3]\ $The following operator inequalities hold and are mutually equivalent:%
\begin{equation}
\forall X\in \mathfrak{B}(H),\ \left\Vert A^{\ast }AX\right\Vert +\left\Vert
XBB^{\ast }\right\Vert \geq 2\left\Vert AXB\right\Vert ,  \tag{$1$}
\end{equation}%
for every $A,\ B\in \mathfrak{B}(H)$, 
\begin{equation}
\forall X\in \mathfrak{B}(H),\ \left\Vert SXR^{+}\right\Vert +\left\Vert
S^{+}XR\right\Vert \geq 2\left\Vert SS^{+}XR^{+}R\right\Vert ,  \tag{$2$}
\end{equation}%
for every normal operators with closed ranges $S,R\in \mathfrak{B}(H),$ 
\begin{equation}
\forall X\in \mathfrak{B}(H),\ \left\Vert SXR^{-1}\right\Vert +\left\Vert
S^{-1}XR\right\Vert \geq 2\left\Vert X\right\Vert ,  \tag{$3$}
\end{equation}%
for every invertible normal operators $S,R\in \mathfrak{B}(H),$%
\begin{equation}
\forall X\in \mathfrak{B}(H),\left\Vert S^{2}X\right\Vert +\left\Vert
XR^{2}\right\Vert \geq 2\left\Vert SXR\right\Vert ,  \tag{$4$}
\end{equation}%
for every normal operators $S,R\in \mathfrak{B}(H),$%
\begin{equation}
\forall X\in \mathfrak{B}(H),\ \left\Vert A^{\ast }AX\right\Vert +\left\Vert
XAA^{\ast }\right\Vert \geq 2\left\Vert AXA\right\Vert ,  \tag{$1^{\prime }$}
\end{equation}%
for every $A\in \mathfrak{B}(H),$ 
\begin{equation}
\forall X\in \mathfrak{B}(H),\ \left\Vert SXS^{+}\right\Vert +\left\Vert
S^{+}XS\right\Vert \geq 2\left\Vert SS^{+}XS^{+}S\right\Vert , 
\tag{$2^{\prime }$}
\end{equation}%
for every normal operator with closed range $S\in \mathfrak{B}(H),$%
\begin{equation}
\forall X\in \mathfrak{B}(H),\ \left\Vert SXS^{-1}\right\Vert +\left\Vert
S^{-1}XS\right\Vert \geq 2\left\Vert X\right\Vert ,  \tag{$3^{\prime }$}
\end{equation}%
for every invertible normal operator $S\in \mathfrak{B}(H),$%
\begin{equation}
\forall X\in \mathfrak{B}(H),\ \left\Vert S^{2}X\right\Vert +\left\Vert
XS^{2}\right\Vert \geq 2\left\Vert SXS\right\Vert ,  \tag{$4^{\prime }$}
\end{equation}%
for every normal operator $S\in \mathfrak{B}(H),$
\end{proposition}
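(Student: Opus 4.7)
I would organize the eight inequalities into the four pairs $\{(k),(k')\}$ for $k\in\{1,2,3,4\}$ and argue in three stages: first, reduce each two-operator inequality $(k)$ to its one-operator counterpart $(k')$ via a $2\times 2$ block trick; second, prove $(1)$ directly, independently of $(S\text{-}AGMI)$; third, cycle through the chain $(1)\Leftrightarrow(4)\Leftrightarrow(3)\Leftrightarrow(2)$ using the spectral structure of normal operators. Within each pair, the unprimed obviously implies the primed by taking $R=S$; conversely, given $A,B\in\mathfrak{B}(H)$, I would apply $(1')$ to the block operators $C=\left(\begin{smallmatrix}A & 0\\ 0 & B\end{smallmatrix}\right)$ and $Y=\left(\begin{smallmatrix}0 & X\\ 0 & 0\end{smallmatrix}\right)$ on $H\oplus H$: the only nonzero blocks of $C^{\ast}CY$, $YCC^{\ast}$, $CYC$ are $A^{\ast}AX$, $XBB^{\ast}$, $AXB$ respectively, so $(1')$ applied to $C,Y$ reads exactly as $(1)$. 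The same block-diagonal construction, together with the fact that direct sums preserve normality, invertibility, and closed range, transfers $(k')$ to $(k)$ for $k=2,3,4$.

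For the direct proof of $(1)$, the $C^{\ast}$-identity applied twice (as in $\|TZ\|^{2}=\|Z^{\ast}|T|^{2}Z\|=\||T|Z\|^{2}$ and its right-hand analogue) yields the symmetry
\[
\|AXB\|=\||A|\,X\,|B^{\ast}|\|.
\]
Setting $P=A^{\ast}A$ and $Q=BB^{\ast}$, so that $P^{1/2}=|A|$ and $Q^{1/2}=|B^{\ast}|$, the inequality $(1)$ is equivalent to $\|PX\|+\|XQ\|\geq 2\|P^{1/2}XQ^{1/2}\|$. I would obtain this by applying the Hadamard three-lines lemma to the bounded analytic $\mathfrak{B}(H)$-valued function $z\mapsto P^{z}XQ^{1-z}$ on the strip $\{0\leq\mathrm{Re}\,z\leq 1\}$: its values are bounded by $\|XQ\|$ on $\mathrm{Re}\,z=0$ and by $\|PX\|$ on $\mathrm{Re}\,z=1$, yielding the interpolation estimate $\|P^{1/2}XQ^{1/2}\|^{2}\leq\|PX\|\|XQ\|$, and AM--GM then finishes the job. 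This route avoids any appeal to $(S\text{-}AGMI)$.

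For the chain of equivalences among $(1),(2),(3),(4)$, the crucial fact is the spectral identity $|T^{2}|=|T|^{2}$ valid for every normal $T$. It implies, for normal $S,R$, the equalities $\|S^{2}X\|=\|S^{\ast}SX\|$ and $\|XR^{2}\|=\|XRR^{\ast}\|$, so that $(1)$ with $A=S$, $B=R$ is exactly $(4)$; conversely, applying $(4)$ to the positive operators $S=|A|$, $R=|B^{\ast}|$ recovers $(1)$ via the symmetry above. The equivalence $(3)\Leftrightarrow(4)$ on the invertible normal case follows from the substitutions $X\leftrightarrow S^{-1}YR^{-1}$ and $X\leftrightarrow SZR$; the non-invertible case of $(4)$ is recovered from $(3)$ by perturbing $S\to S+\lambda I$, $R\to R+\mu I$ with $-\lambda\notin\sigma(S)$, $-\mu\notin\sigma(R)$ (which preserves normality) and passing to the limit $\lambda,\mu\to 0$ by norm continuity of all three terms. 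For $(2)\Leftrightarrow(3)$, I would decompose a normal closed-range $S=\mathrm{diag}(S_{1},0)$ on $R(S)\oplus\ker S^{\ast}$ with $S_{1}$ invertible normal, so that $S^{+}=\mathrm{diag}(S_{1}^{-1},0)$ and $SS^{+}=S^{+}S$ is the projection onto $R(S)$; then $(2)$ reduces to $(3)$ applied to $S_{1},R_{1}$ on the ranges.

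The principal obstacle is the direct proof of $(1)$ independent of $(S\text{-}AGMI)$: any purely $C^{\ast}$-algebraic manipulation of the norms involved tends to loop back to the classical arithmetic-geometric mean estimate, so the three-lines interpolation device (or an equivalent analytic tool) appears essential. Once $(1)$ is established, the identity $|T^{2}|=|T|^{2}$ for normal $T$ makes the remaining equivalences routine, the only technical care being the Moore--Penrose bookkeeping in $(2)$ and the perturbation argument for the non-invertible case of $(4)$.
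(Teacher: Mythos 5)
Your overall architecture overlaps substantially with the paper's: the Berberian $2\times 2$ block trick for $(1^{\prime })\Rightarrow (1)$, the identity $\Vert T^{2}X\Vert =\Vert T^{\ast }TX\Vert$ (equivalently $\vert T^{2}\vert =\vert T\vert ^{2}$) for normal $T$ to pass between $(1)$ and $(4)$, and the EP decomposition for $(2)$ are all the same moves. There is, however, one step that genuinely fails as written: recovering the non-invertible case of $(4)$ from $(3)$ by perturbing $S\rightarrow S+\lambda I$, $R\rightarrow R+\mu I$ with $-\lambda \notin \sigma (S)$ and letting $\lambda ,\mu \rightarrow 0$. If $0$ lies in the \emph{interior} of $\sigma (S)$ --- e.g. $S=$ multiplication by $z$ on $L^{2}$ of the closed unit disk, a normal operator whose spectrum is the whole disk --- then every sufficiently small $\lambda $ satisfies $-\lambda \in \sigma (S)$, so there is no admissible sequence $\lambda _{n}\rightarrow 0$ and the limiting argument cannot even start. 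The paper avoids this by perturbing not $S$ itself but its modulus: set $P=\vert S\vert $, $Q=\vert R^{\ast }\vert $; then $P+\epsilon I$ and $Q+\epsilon I$ are invertible positive (hence normal) for every $\epsilon >0$, one applies $(3)$ to them, lets $\epsilon \rightarrow 0$ to obtain $\Vert P^{2}X\Vert +\Vert XQ^{2}\Vert \geq 2\Vert PXQ\Vert $, and then uses $\Vert S^{2}X\Vert =\Vert P^{2}X\Vert $, $\Vert XR^{2}\Vert =\Vert XQ^{2}\Vert $, $\Vert SXR\Vert =\Vert PXQ\Vert $. You already hold all the ingredients for this fix --- you state the symmetry $\Vert AXB\Vert =\Vert \,\vert A\vert X\vert B^{\ast }\vert \,\Vert $ --- you simply applied the perturbation to the wrong object.

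A secondary remark on your direct proof of $(1)$: the three-lines interpolation for $z\mapsto P^{z}XQ^{1-z}$ does work, but only after regularizing $P$ and $Q$ by $\epsilon I$ so that $P^{iy}$ and $Q^{-iy}$ are genuinely unitary on the boundary lines; and your claim that a purely C*-algebraic argument must loop back to $(S\text{-}AGMI)$ is not accurate. The paper's direct proof is a one-liner: by the numerical arithmetic-geometric mean inequality and submultiplicativity,
\begin{equation*}
\tfrac{1}{2}\left( \Vert A^{\ast }AX\Vert +\Vert XBB^{\ast }\Vert \right)
\geq \sqrt{\Vert A^{\ast }AX\Vert \,\Vert BB^{\ast }X^{\ast }\Vert }\geq
\sqrt{\Vert BB^{\ast }X^{\ast }A^{\ast }AX\Vert }\geq \sqrt{r(BB^{\ast
}X^{\ast }A^{\ast }AX)}=\sqrt{r(B^{\ast }X^{\ast }A^{\ast }AXB)}=\Vert
AXB\Vert ,
\end{equation*}
using $r(UV)=r(VU)$ and $r(T^{\ast }T)=\Vert T\Vert ^{2}$. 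This route is more elementary than interpolation and carries no invertibility caveat.
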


\begin{proof}
$(1)\Rightarrow (2).$ Assume $(1)$ holds. Let $S,R$ be two normal operators
with closed ranges in $\mathfrak{B}(H)$, and let $X\in \mathfrak{B}(H)$.
Since $S^{\ast }=S^{\ast }SS^{+}$ and $R^{\ast }=R^{+}RR^{\ast }$, then from 
$(1)$ and $(Remark\ 1)$, it follows that 
\begin{eqnarray*}
\left\Vert SXR^{+}\right\Vert +\left\Vert S^{+}XR\right\Vert &=&\left\Vert
S^{\ast }S\left( S^{+}XR^{+}\right) \right\Vert +\left\Vert \left(
S^{+}XR^{+}\right) RR^{\ast }\right\Vert \\
&\geq &2\left\Vert SS^{+}XR^{+}R\right\Vert .
\end{eqnarray*}

Hence $(2)$ holds.

$(2)\Rightarrow (3).$ This implication is trivial.

$(3)\Rightarrow (4).$ Assume $(3)$ holds.

let $S,\ R,\ X\in \mathfrak{B}(H)$ such that $S,\ R$ are normal. Put $%
P=\left\vert S\right\vert ,\ \ Q=\left\vert R^{\ast }\right\vert $, and let $%
\epsilon >0$.

It is clear that the two operators $P+\epsilon I$ and $Q+\epsilon I$ are
normal and invertible. So, from $(3)$, we obtain%
\begin{equation*}
\forall \epsilon >0,\ \forall X\in \mathfrak{B}(H),\ \left\Vert \left(
P+\epsilon I\right) ^{2}X\right\Vert +\left\Vert X\left( Q+\epsilon I\right)
^{2}\right\Vert \geq 2\left\Vert \left( P+\epsilon I\right) X\left(
Q+\epsilon I\right) \right\Vert .
\end{equation*}

By letting $\epsilon \rightarrow 0$, we deduce that:%
\begin{equation*}
(\ast )\ \ \ \forall X\in \mathfrak{B}(H),\ \left\Vert P^{2}X\right\Vert
+\left\Vert XQ^{2}\right\Vert \geq 2\left\Vert PXQ\right\Vert .
\end{equation*}

So, we have: 
\begin{eqnarray*}
\left\Vert S^{2}X\right\Vert +\left\Vert XR^{2}\right\Vert &=&\left\Vert
S^{\ast }SX\right\Vert +\left\Vert XRR^{\ast }\right\Vert \text{ \ \ (from }%
Remark\ 1\text{), } \\
&=&\left\Vert P^{2}X\right\Vert +\left\Vert XQ^{2}\right\Vert \text{ \ } \\
&\geq &2\left\Vert PXQ\right\Vert \text{ \ \ \ (from }(\ast )\text{),} \\
&=&2\left\Vert SXR\right\Vert .
\end{eqnarray*}

This proves $(4)$.

$(4)\Rightarrow (1)$. Assume $(4)$ holds.

Let $A,\ B,\ X\in \mathfrak{B}(H)$. Put $P=\left\vert A\right\vert ,\
Q=\left\vert B^{\ast }\right\vert $. Then, we have:%
\begin{eqnarray*}
\left\Vert A^{\ast }AX\right\Vert +\left\Vert XBB^{\ast }\right\Vert
&=&\left\Vert P^{2}X\right\Vert +\left\Vert XQ^{2}\right\Vert \\
&\geq &2\left\Vert PXQ\right\Vert \text{ \ \ (from }(4)\text{),} \\
&=&2\left\Vert AXB\right\Vert .
\end{eqnarray*}

This proves $(1)$.

Therefore the operator inequalities $(1)-(4)$ are equivalent.

From a pair of operators to a single operator, we deduce that the operator
inequalities $(1^{\prime })-(4^{\prime })$ are also equivalent.

$(1)\Rightarrow (1^{\prime })$. This implication is trivial.

$(1^{\prime })\Rightarrow (1)$. Assume $(1^{\prime })$ holds (here we use
the Berberian technic).

Let $A,B,X\in \mathfrak{B}(H)$. Consider now, the bounded linear operators $%
C,\ Y$ defined on the Hilbert space $H\oplus H$ given by $C=\left[ 
\begin{array}{cc}
A & 0 \\ 
0 & B%
\end{array}%
\right] ,\ Y=\left[ 
\begin{array}{cc}
0 & X \\ 
0 & 0%
\end{array}%
\right] $. By a simple computation, we obtain $C^{\ast }CY=\left[ 
\begin{array}{cc}
0 & A^{\ast }AX \\ 
0 & 0%
\end{array}%
\right] ,\ YCC^{\ast }=\left[ 
\begin{array}{cc}
0 & XBB^{\ast } \\ 
0 & 0%
\end{array}%
\right] $, and $CYC=\left[ 
\begin{array}{cc}
0 & AXB \\ 
0 & 0%
\end{array}%
\right] $. Applying $(1^{\prime })$ for the Hilbert space $H\oplus H$, we
obtain $\left\Vert A^{\ast }AX\right\Vert +\left\Vert XAA^{\ast }\right\Vert
=\left\Vert C^{\ast }CY\right\Vert +\left\Vert YCC^{\ast }\right\Vert \geq
2\left\Vert CYC\right\Vert =2\left\Vert AXB\right\Vert $. This proves $(1)$.

Therefore the inequalities $(1)-(4)$, and $(1^{\prime })-(4^{\prime })$ are
mutually equivalent. It remains to prove that one of them holds. It is clear
that $(1)$ is an immediate consequence of the known Arithmetic-geometric
mean inequality $(S-AGMI)$. But here, we shall give a direct proof of $(1)$
independently of $(S-AGMI)$ by using the numerical arithmetic-geometric mean
inequality. Let $A,B,X\in \mathfrak{B}(H).$ The following inequalities hold:%
\begin{eqnarray*}
\frac{1}{2}\left( \left\Vert A^{\ast }AX\right\Vert +\left\Vert XBB^{\ast
}\right\Vert \right) &\geq &\sqrt{\left\Vert A^{\ast }AX\right\Vert
\left\Vert XBB^{\ast }\right\Vert } \\
&\geq &\sqrt{\left\Vert BB^{\ast }X^{\ast }A^{\ast }AX\right\Vert } \\
&\geq &\sqrt{r(BB^{\ast }X^{\ast }A^{\ast }AX)} \\
&=&\sqrt{r\left( B^{\ast }X^{\ast }A^{\ast }AXB\right) } \\
&=&\left\Vert AXB\right\Vert .
\end{eqnarray*}
\end{proof}

\begin{corollary}
The following operator inequalities hold and are equivalent to $(N-AGMI)$: 
\begin{equation}
\forall X\in \mathfrak{B}(H),\ \left\Vert S^{\ast }XR^{+}\right\Vert
+\left\Vert S^{+}XR^{\ast }\right\Vert \geq 2\left\Vert
SS^{+}XR^{+}R\right\Vert ,  \tag{$5$}
\end{equation}%
for every operators with closed ranges $S,R\in \mathfrak{B}(H)$,

\begin{equation}
\forall X\in \mathfrak{B}(H),\ \left\Vert S^{\ast }XR^{-1}\right\Vert
+\left\Vert S^{-1}XR^{\ast }\right\Vert \geq 2\left\Vert X\right\Vert , 
\tag{$6$}
\end{equation}%
for every invertible operators $S,R\in \mathfrak{B}(H)$,

\begin{equation}
\forall X\in \mathfrak{B}(H),\ \left\Vert S^{\ast }XS^{+}\right\Vert
+\left\Vert S^{+}XS^{\ast }\right\Vert \geq 2\left\Vert
SS^{+}XS^{+}S\right\Vert ,  \tag{$5^{\prime }$}
\end{equation}%
for every operator with closed range $S\in \mathfrak{B}(H)$,

\begin{equation}
\forall X\in \mathfrak{B}(H),\ \left\Vert S^{\ast }XS^{-1}\right\Vert
+\left\Vert S^{-1}XS^{\ast }\right\Vert \geq 2\left\Vert X\right\Vert , 
\tag{$6^{\prime }$}
\end{equation}%
for every invertible operator $S\in \mathfrak{B}(H)$.
\end{corollary}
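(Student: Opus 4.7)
The plan is to deduce the four inequalities from the master N-AGMI (namely inequality $(1)$ of Proposition 1) by a clean Moore--Penrose substitution, and then close the equivalence loop by restricting $(6')$ to normal invertible operators and invoking Remark 1.

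For the forward direction, I would focus on $(5)$; the remaining three are specializations. Given $S,R\in\mathfrak{B}(H)$ with closed ranges and $X\in\mathfrak{B}(H)$, I would apply inequality $(1)$ of Proposition 1 with the choice $A=S$, $B=R$ and with the dummy operator taken to be $S^{+}XR^{+}$. The identity $SS^{+}S=S$, together with the self-adjointness of the projection $SS^{+}$, gives $S^{*}SS^{+}=S^{*}$ after taking adjoints; similarly $R^{+}R$ is the orthogonal projection onto $\overline{R(R^{*})}$, and since every $R^{*}y$ lies in $R(R^{*})$, one has $R^{+}RR^{*}=R^{*}$. Substituting these into
\[
\|S^{*}S\,(S^{+}XR^{+})\| + \|(S^{+}XR^{+})\,RR^{*}\| \;\geq\; 2\|S(S^{+}XR^{+})R\|
\]
collapses the left side to $\|S^{*}XR^{+}\|+\|S^{+}XR^{*}\|$ and the right side to $2\|SS^{+}XR^{+}R\|$, which is exactly $(5)$. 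Then $(6)$ follows from $(5)$ by taking $S,R$ invertible (so $S^{+}=S^{-1}$, $R^{+}=R^{-1}$ and the flanking projections are $I$), $(5')$ by specializing $R=S$, and $(6')$ by combining both specializations.

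For the reverse implication, I would note that $(6')$, restricted to a normal invertible $S$, reduces via Remark 1 (which gives $\|S^{*}Y\|=\|SY\|$ and $\|YS^{*}\|=\|YS\|$ for normal $S$) to
\[
\|SXS^{-1}\| + \|S^{-1}XS\| \;\geq\; 2\|X\|,
\]
which is precisely $(3')$ of Proposition 1. Since $(3')$ is already known to be equivalent to $(N\text{-}AGMI)$, the loop is closed and all four inequalities are equivalent to $(N\text{-}AGMI)$; because $(N\text{-}AGMI)$ was established in Proposition 1, they also all hold. The main obstacle is simply choosing the correct Moore--Penrose sandwich $S^{+}XR^{+}$ so that the needed identities $S^{*}SS^{+}=S^{*}$ and $R^{+}RR^{*}=R^{*}$ apply; once the substitution is in place the rest is formal.
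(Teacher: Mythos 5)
Your proposal is correct and follows essentially the same route as the paper: inequality $(5)$ is obtained from $(N\text{-}AGMI)$ by the substitution $X\mapsto S^{+}XR^{+}$ together with the identities $S^{*}SS^{+}=S^{*}$ and $R^{+}RR^{*}=R^{*}$, the other three are specializations, and the equivalence is closed by reducing, via Remark 1, to the normal-operator inequalities of Proposition 1 (the paper records this reduction for all four, you for $(6')$ only, but the specializations make that sufficient).
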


\begin{proof}
Assume $(N-AGMI)$ holds. Prove that $(5)$ holds.

Let $S,R\in \mathcal{R}(H)$, and $X\in \mathfrak{B}(H)$. Since, $SS^{+}S=S$
and $RR^{+}R=R$, then we have 
\begin{eqnarray*}
\left\Vert S^{\ast }XR^{+}\right\Vert +\left\Vert S^{+}XR^{\ast }\right\Vert
&=&\left\Vert S^{\ast }S\left( S^{+}XR^{+}\right) \right\Vert +\left\Vert
\left( S^{+}XR^{+}\right) RR^{\ast }\right\Vert , \\
&\geq &2\left\Vert SS^{+}XR^{+}R\right\Vert \text{, \ \ \ (from }(N-AGMI)%
\text{).}
\end{eqnarray*}

This proves $(5)$.

It is clear that $(5)$ implies $(6),\ (5^{\prime }),\ (6^{\prime })$, and
using $Remark\ 1$, then $(5)$ (resp. $(6),\ (5^{\prime }),\ (6^{\prime })$)
implies $(2)$ (resp. $(3),\ (2^{\prime }),\ (3^{\prime })$).
\end{proof}

Note that the six operator inequalities $(2)-(4)$ and $(2^{^{\prime
}})-(4^{^{\prime }})$ given in the last proposition are generated by a pair
of normal operators and a single of normal operator, respectively.

We shall interest to describe the class of

$\mathbf{(i)}\ $all invertible operators $S\in \mathfrak{B}(H)$ satisfying \
the operator inequality $(3^{^{\prime }}),$

$\mathbf{(ii)}\ $all operators with closed ranges $S\in \mathfrak{B}(H)$
satisfying \ the operator inequality $(2^{^{\prime }}),$

$\mathbf{(iii)}\ $all operators $S\in \mathfrak{B}(H)$ satisfying the
operator inequality $(4^{^{\prime }}).$

We shall prove that the class

\begin{enumerate}
\item[$\bullet $] $\mathbf{(i)}$ is the class of all invertible normal
operators in $\mathfrak{B}(H),$

\item[$\bullet $] $\mathbf{(ii)}$ is the class of all normal operators with
closed ranges in $\mathfrak{B}(H),$

\item[$\bullet $] $\mathbf{(iii)}$\textbf{\ }is the class of all normal
operators in $\mathfrak{B}(H).$
\end{enumerate}

We shall present\ here all these characterizations and others. We need \ the
following lemmas.

\begin{lemma}
$[22]$. Let $A\in \mathfrak{B}(H)$. If $\left\Vert A-\lambda I\right\Vert
=r(A-\lambda I)$, for all complex$\lambda $, then $A$ is convexoid.
\end{lemma}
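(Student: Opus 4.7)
The plan is to deduce from the hypothesis that $w(A-\lambda I)=r(A-\lambda I)$ for every $\lambda\in\mathbb{C}$, and then to use this equality to show that the two compact convex sets $\overline{W(A)}$ and $co\,\sigma (A)$ coincide. Since $V(A)=\overline{W(A)}$ for $\mathcal{A}=\mathfrak{B}(H)$, this is precisely the convexoid property.

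First I would invoke the elementary chain $r(T)\leq w(T)\leq \left\Vert T\right\Vert $, valid for every $T\in \mathfrak{B}(H)$. Applied with $T=A-\lambda I$, the hypothesis collapses it into equalities, yielding in particular $w(A-\lambda I)=r(A-\lambda I)$ for every $\lambda \in \mathbb{C}$.

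Next I would rewrite both sides as suprema on $\mathbb{C}$. By the spectral mapping theorem, $r(A-\lambda I)=\sup_{\mu \in \sigma (A)}\left\vert \mu -\lambda \right\vert $; convexity of $\mu \mapsto \left\vert \mu -\lambda \right\vert $ shows that this sup is unchanged when $\sigma (A)$ is replaced by its convex hull. Similarly, the translation-covariance $\overline{W(A-\lambda I)}=\overline{W(A)}-\lambda $ gives $w(A-\lambda I)=\sup_{\mu \in \overline{W(A)}}\left\vert \mu -\lambda \right\vert $. Setting $K_{1}=\overline{W(A)}$ and $K_{2}=co\,\sigma (A)$, we obtain two compact convex subsets of $\mathbb{C}$ satisfying the standard inclusion $K_{2}\subseteq K_{1}$ together with $\sup_{\mu \in K_{1}}\left\vert \mu -\lambda \right\vert =\sup_{\mu \in K_{2}}\left\vert \mu -\lambda \right\vert $ for every $\lambda \in \mathbb{C}$.

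The main obstacle is then to deduce $K_{1}=K_{2}$ from equality of these farthest-distance functions. I would recover the support function of each $K_{i}$ as a limit: fixing a unit $\xi \in \mathbb{C}$ and taking $\lambda =-t\xi $ with $t\to +\infty $, the expansion $\left\vert \mu +t\xi \right\vert =t+\mathrm{Re}(\overline{\xi }\mu )+O(1/t)$ (uniform on bounded sets) gives
\[
\lim_{t\to +\infty }\Bigl(\sup_{\mu \in K_{i}}\left\vert \mu -\lambda \right\vert -t\Bigr)=\sup_{\mu \in K_{i}}\mathrm{Re}(\overline{\xi }\mu )=:h_{K_{i}}(\xi ).
\]
Hence $h_{K_{1}}\equiv h_{K_{2}}$ on the unit circle, and since both sets are compact and convex this forces $K_{1}=K_{2}$, so $A$ is convexoid. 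Alternatively, one could argue by a supporting-hyperplane separation: if $z_{0}\in K_{1}\setminus K_{2}$, separate it strictly from $K_{2}$, push $\lambda $ to infinity along the outward normal, and derive a contradiction from the farthest-distance equality.
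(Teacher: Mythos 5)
The paper states this lemma only with a citation to Stampfli $[22]$ and supplies no proof of its own, so there is no in-paper argument to compare yours against line by line; judged on its own, your proof is correct and complete. The chain $r(A-\lambda I)\leq w(A-\lambda I)\leq \left\Vert A-\lambda I\right\Vert$ does collapse to equalities under the hypothesis; the identifications of $r(A-\lambda I)$ and $w(A-\lambda I)$ as farthest-point distances from $\lambda$ to $co\,\sigma (A)$ and to $\overline{W(A)}$ are right (the passage from $\sigma (A)$ to its convex hull being justified by convexity of $\mu \mapsto \left\vert \mu -\lambda \right\vert$, and the use of $\overline{W(A)}$ in place of $W(A)$ being harmless for a supremum); and recovering the support functions by sending $\lambda =-t\xi$ to infinity via the uniform expansion $\left\vert \mu +t\xi \right\vert =t+\mathrm{Re}(\overline{\xi }\mu )+O(1/t)$ is a clean way of turning the family of disks into half-planes. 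Two small remarks: once the support functions are shown to coincide you do not actually need the a priori inclusion $co\,\sigma (A)\subseteq \overline{W(A)}$, and your limiting argument is in substance the classical one (the intersection of all closed disks containing a compact planar set is its convex hull), which is essentially how Stampfli's original proof proceeds. So your route is the standard one, here written out in full where the survey merely cites the result.
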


\begin{lemma}
$[12]\ $Let $P,\ Q$ be two invertible positive operators in $\mathfrak{B}(H)$
satisfying the following operator inequality%
\begin{equation*}
\forall X\in \mathfrak{B}(H),\ \left\Vert X\right\Vert +\left\Vert
PXP^{-1}\right\Vert \geq 2\left\Vert QXQ^{-1}\right\Vert .
\end{equation*}

Then, we have $\left\{ P\right\} ^{,}\subset \left\{ Q\right\} ^{,}.$
\end{lemma}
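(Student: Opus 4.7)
The first move is to plug an arbitrary $X\in\{P\}^{\prime}$ into the hypothesis. Since $PX=XP$ gives $PXP^{-1}=X$, the inequality collapses to $\|QXQ^{-1}\|\leq\|X\|$, and because $\{P\}^{\prime}$ is stable under the shift $X\mapsto X-\lambda I$, this immediately upgrades to
\begin{equation*}
\|QXQ^{-1}-\lambda I\|\leq\|X-\lambda I\|\qquad (\forall X\in\{P\}^{\prime},\ \forall\lambda\in\mathbb{C}).
\end{equation*}

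Next I would restrict to self-adjoint $X\in\{P\}^{\prime}$; such operators are plentiful, because $P=P^{\ast}$ makes $\{P\}^{\prime}$ a $*$-subalgebra of $\mathfrak{B}(H)$. For such an $X$, the operator $X-\lambda I$ is normal, so $\|X-\lambda I\|=r(X-\lambda I)$; and since $QXQ^{-1}$ is similar to $X$ it has the same spectrum, giving $r(QXQ^{-1}-\lambda I)=r(X-\lambda I)$. Combined with the previous inequality this yields
\begin{equation*}
r(QXQ^{-1}-\lambda I)\leq\|QXQ^{-1}-\lambda I\|\leq\|X-\lambda I\|=r(QXQ^{-1}-\lambda I),
\end{equation*}
so equality holds throughout. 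Lemma~1 then tells me that $QXQ^{-1}$ is convexoid, i.e.\ $V(QXQ^{-1})=\overline{co}\,\sigma(QXQ^{-1})=\overline{co}\,\sigma(X)\subset\mathbb{R}$. Since $V(A)=\overline{W(A)}\subset\mathbb{R}$ forces $A=A^{\ast}$ (from $\langle(A-A^{\ast})x,x\rangle=0$ for every $x\in H$), the operator $QXQ^{-1}$ is self-adjoint; using $Q=Q^{\ast}$ this rewrites as $QXQ^{-1}=Q^{-1}XQ$, that is, $Q^{2}X=XQ^{2}$.

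To close, I would note that every $X\in\{P\}^{\prime}$ decomposes as $X=A+iB$ with self-adjoint $A,B\in\{P\}^{\prime}$, so by the previous step $Q^{2}$ commutes with every element of $\{P\}^{\prime}$; in other words $Q^{2}\in\{P\}^{\prime\prime}$. Since $\{P\}^{\prime\prime}$ is a von Neumann algebra and $Q^{2}$ is positive, continuous functional calculus places $Q=(Q^{2})^{1/2}$ in $\{P\}^{\prime\prime}$, which is exactly the conclusion $\{P\}^{\prime}\subset\{Q\}^{\prime}$. The creative step, and the one I expect to be the main obstacle, is the passage from the purely \emph{normed} inequality on the self-adjoint slice of $\{P\}^{\prime}$ to the \emph{algebraic} identity $Q^{2}X=XQ^{2}$; this hinges on squeezing norm and spectral radius together and then invoking Lemma~1 together with the standard characterization of self-adjointness via the numerical range. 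Everything surrounding it is routine $*$-algebra and functional-calculus bookkeeping.
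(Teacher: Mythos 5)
Your proposal is correct and follows essentially the same route as the paper: substitute $X-\lambda I$ for a self-adjoint $X\in\{P\}^{\prime}$, squeeze norm against spectral radius using normality and similarity-invariance of the spectrum, invoke Lemma~1 to get that $QXQ^{-1}$ is convexoid with real numerical range and hence self-adjoint, and then pass to general $X$ by the decomposition into real and imaginary parts. The only cosmetic difference is that you defer extracting $Q$ from $Q^{2}$ to a single functional-calculus step at the end (via $Q^{2}\in\{P\}^{\prime\prime}$), whereas the paper performs the equivalent step $Q^{2}X=XQ^{2}\Rightarrow QX=XQ$ inside the self-adjoint case.
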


\begin{proof}
$(i).$ Let $X$ be a selfadjoint operator in $\mathfrak{B}(H)$ such that $%
PX=XP$, and let $\alpha $ be an arbitrary complex number. Replace $X$ by $%
X-\alpha I$ in the inequality given by the lemma, and since $X-\alpha I$ is
normal, we obtain 
\begin{equation*}
\left\Vert X-\alpha I\right\Vert \geq \left\Vert Q(X-\alpha
I)Q^{-1}\right\Vert \geq r\left( Q(X-\alpha I)Q^{-1}\right) =\left\Vert
X-\alpha I\right\Vert .
\end{equation*}

Hence, $\left\Vert QXQ^{-1}-\alpha I\right\Vert =r\left( QXQ^{-1}-\alpha
I\right) $, for all complex number $\alpha $. Using the above lemma, we
obtain that the 
\begin{eqnarray*}
V\left( QXQ^{-1}\right) &=&co\left( \sigma \left( QXQ^{-1}\right) \right) ,
\\
&=&co\sigma \left( X\right) , \\
&\subset &\mathbb{R}.
\end{eqnarray*}%
This give us that $QXQ^{-1}$ is selfadjoint. Hence, $QX=XQ$.

$(ii).$ Now, let $X$ be an arbitrary operator in $\mathfrak{B}(H)$. Put $%
X=X_{1}+iX_{2}$, where $X_{1}=\func{Re}(X)$, and $X_{2}=\func{Im}X$. Assume
that $PX=XP$. Then, $PX_{1}=X_{1}P$ and $PX_{2}=X_{2}P$.\ From $(i)$, we
deduce that, $QX_{1}=X_{1}Q$ and $QX_{2}=X_{2}Q$. Thus, $QX=XQ$. Therefore, $%
\left\{ P\right\} ^{^{,}}\subset \left\{ Q\right\} ^{^{,}}$.
\end{proof}

\begin{lemma}
$[12]\ $Let $P,\ Q$ be two invertible positive operators in $\mathfrak{B}(H)$
satisfying the following operator inequality%
\begin{equation*}
\forall X\in \mathfrak{B}(H),\ \left\Vert PXP^{-1}\right\Vert +\left\Vert
Q^{-1}XQ\right\Vert \geq 2\left\Vert X\right\Vert .
\end{equation*}

Then, we have $\left\{ P\right\} ^{^{,}}=\left\{ Q\right\} ^{^{,}}$.
\end{lemma}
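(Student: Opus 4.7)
The plan is to reduce the statement to two applications of Lemma~5, using a symmetry of the hypothesis under the swap $P\leftrightarrow Q$. Since $P$ and $Q$ are selfadjoint, taking adjoints in the given inequality and relabeling $X^{*}\mapsto X$ yields the companion inequality
\[
\|P^{-1}XP\|+\|QXQ^{-1}\|\geq 2\|X\|,\qquad\forall X\in\mathfrak{B}(H).
\]
Combined, the two inequalities are symmetric under $P\leftrightarrow Q$, so it is enough to establish the inclusion $\{P\}'\subset\{Q\}'$; the reverse inclusion will then follow by the same argument with the roles of $P$ and $Q$ exchanged.

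To bring the hypothesis into the form of Lemma~5, I would substitute $X\mapsto QYQ^{-1}$ to obtain
\[
\|Y\|+\|(PQ)Y(PQ)^{-1}\|\geq 2\|QYQ^{-1}\|,
\]
which has the shape of Lemma~5 with first operator $PQ$ (invertible, though not necessarily positive) and second operator $Q$ (positive invertible). A careful reading of the proof of Lemma~5 reveals that positivity of the first operator is used only to split an arbitrary commuting $X$ into its selfadjoint real and imaginary parts; without it, the conclusion still goes through at the selfadjoint level, giving the inclusion $\{PQ\}'_{\mathrm{sa}}\subset\{Q\}'$. The parallel substitution $X\mapsto P^{-1}YP$ produces $\|Y\|+\|(PQ)^{-1}Y(PQ)\|\geq 2\|P^{-1}YP\|$, from which Lemma~5 yields $\{PQ\}'_{\mathrm{sa}}\subset\{P\}'$.

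The hard part is then the passage from information about $\{PQ\}'$ to the target $\{P\}'=\{Q\}'$, since $Y\in\{P\}'$ is a priori strictly weaker than $Y\in\{PQ\}'$. I would close this gap by iterating the above substitutions, using $P^{m}$ and $Q^{n}$ in place of $P$ and $Q$, to obtain analogous inclusions $\{P^{m}Q^{n}\}'_{\mathrm{sa}}\subset\{P\}'\cap\{Q\}'$ for all integers $m,n$, and then invoking the continuous functional calculi for the positive invertible operators $P$ and $Q$ to pass from mixed monomials in $P,Q$ to the spectral projections of $P$ and of $Q$ individually, finally delivering $\{P\}'=\{Q\}'$. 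This bootstrapping step, which crucially relies on the positivity of $P$ and $Q$ (and, via Fuglede's theorem, on the fact that $\{P\}'$ and $\{Q\}'$ are $*$-closed, reducing the problem to its selfadjoint restriction), is where the technical heart of the proof lies.
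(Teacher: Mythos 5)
Your opening moves coincide with the paper's: the substitution $X\mapsto QYQ^{-1}$, the appeal to the preceding commutant-inclusion lemma (Lemma 2 in this paper, with hypothesis $\left\Vert X\right\Vert +\left\Vert PXP^{-1}\right\Vert \geq 2\left\Vert QXQ^{-1}\right\Vert$), and a symmetry argument for the reverse inclusion are all exactly what the paper does. But your fix for the non-positivity of $PQ$ discards the very information the proof needs. The paper does not weaken that lemma; it sets $M=\left\vert PQ\right\vert$ and uses the polar decomposition to get $\left\Vert PQY(PQ)^{-1}\right\Vert =\left\Vert MYM^{-1}\right\Vert$, so the lemma applies with the \emph{positive} operator $M$ in first position and yields $\left\{ M\right\} ^{\prime }\subset \left\{ Q\right\} ^{\prime }$. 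Since $M$ trivially lies in its own commutant, this gives $MQ=QM$, hence $M^{2}=QP^{2}Q$ commutes with $Q$, hence $P^{2}Q=QP^{2}$ and finally $PQ=QP$. With your version (first operator $PQ$, conclusion only $\left\{ PQ\right\} _{\mathrm{sa}}^{\prime }\subset \left\{ Q\right\} ^{\prime }$) this is lost: $PQ$ is not selfadjoint, $\left\vert PQ\right\vert$ need not commute with $PQ$ (that would force $PQ$ to be normal), and $\left\{ PQ\right\} _{\mathrm{sa}}^{\prime }$ is not known to contain anything useful, so no commutation relation between $P$ and $Q$ comes out.

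The second and more serious gap is your closing step. The proposed bootstrap over $\left\{ P^{m}Q^{n}\right\} _{\mathrm{sa}}^{\prime }$ is both unavailable and aimed the wrong way: the hypothesis supplies no inequality for $P^{m}$, $Q^{n}$ with $m,n\neq 1$, and even if it did, inclusions of the form $\left\{ P^{m}Q^{n}\right\} ^{\prime }\subset \left\{ P\right\} ^{\prime }\cap \left\{ Q\right\} ^{\prime }$ cannot deliver $\left\{ P\right\} ^{\prime }\subset \left\{ Q\right\} ^{\prime }$, because an operator commuting with $P$ alone need not commute with any mixed monomial $P^{m}Q^{n}$. The paper's actual finish is a second pass through the substituted inequality once $PQ=QP$ is known: for a selfadjoint $X$ with $PX=XP$ and any scalar $\alpha$ one has
\begin{equation*}
PQ(X-\alpha I)Q^{-1}P^{-1}=QP(X-\alpha I)P^{-1}Q^{-1}=Q(X-\alpha I)Q^{-1},
\end{equation*}
so the middle term collapses and the inequality reads $\left\Vert X-\alpha I\right\Vert \geq \left\Vert Q(X-\alpha I)Q^{-1}\right\Vert$ for all $\alpha$; the convexoid argument of the preceding lemma then forces $QX=XQ$, and the real/imaginary part splitting (legitimate because $P$ is selfadjoint) gives $\left\{ P\right\} ^{\prime }\subset \left\{ Q\right\} ^{\prime }$. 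This commutativity-then-collapse step is the missing idea; your reduction "suffices to prove one inclusion" and the final symmetry are fine.
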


\begin{proof}
From the inequality given in the lemma, we have%
\begin{equation*}
\forall X\in \mathfrak{B}(H),\ \left\Vert X\right\Vert +\left\Vert
PQXQ^{-1}P^{-1}\right\Vert \geq 2\left\Vert QXQ^{-1}\right\Vert \ (\ast ).
\end{equation*}

Put $M=\left\vert PQ\right\vert $. So, from this last inequality, we obtain

\begin{equation*}
\forall X\in \mathfrak{B}(H),\ \left\Vert X\right\Vert +\left\Vert
MXM^{-1}\right\Vert \geq 2\left\Vert QXQ^{-1}\right\Vert .
\end{equation*}

Hence, from the above lemma, we deduce that, $MQ=QM$. Then, $PQ=QP$.

Now, let $X$ be a selfadjoint operator in $\mathfrak{B}(H)$ such that $PX=XP$%
, and let $\alpha $ be an arbitrary complex number. Replace in $(\ast )$, $X$
by $X-\alpha I$, so we have, $\left\Vert X-\alpha I\right\Vert \geq
\left\Vert Q(X-\alpha I)Q^{-1}\right\Vert $, for every complex number $%
\alpha $. hence, $QX=XQ$, and \ thus $\left\{ P\right\} ^{^{,}}\subset
\left\{ Q\right\} ^{^{,}}$, this follows by using the same argument as used
in the proof of the above lemma.

Using again the inequality given in the lemma, we obtain also that $\left\{
Q\right\} ^{^{,}}=\left\{ Q^{-1}\right\} ^{^{,}}\subset \left\{
P^{-1}\right\} ^{\prime }=\left\{ P\right\} ^{^{,}}$. Therefore, $\left\{
P\right\} ^{^{,}}=\left\{ Q\right\} ^{^{,}}$.
\end{proof}

\begin{lemma}
Let $\epsilon >0$, and let $\alpha _{_{1}},...,\alpha _{_{n}},\ \beta
_{_{1}},...,\beta _{_{n}}\ ($where $n\geq 1)$ be positive real numbers such
that $0<\alpha _{_{1}}\leq ...\leq \alpha _{_{n}}\leq 1$, $\left\{ \alpha
_{_{1}},...,\alpha _{_{n}}\right\} \subset \left\{ \beta _{_{1}},...,\beta
_{_{n}}\right\} $, and $\frac{\alpha _{_{i}}}{\alpha _{j}}+\frac{\beta
_{_{j}}}{\beta _{i}}\geq 2-\epsilon $, for every $i,\ j$.\ Then, we have $%
\left\vert \alpha _{_{i}}-\beta _{_{i}}\right\vert \leq \epsilon $, for $%
i=1,...,n$.
\end{lemma}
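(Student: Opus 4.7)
The first step is to reduce the statement to one about a permutation. Since $\{\alpha_1,\dots,\alpha_n\}$ and $\{\beta_1,\dots,\beta_n\}$ are collections of $n$ positive reals each, the inclusion $\{\alpha_i\}\subset\{\beta_j\}$ forces equality as multisets; hence there exists a permutation $\sigma$ of $\{1,\dots,n\}$ with $\beta_i=\alpha_{\sigma(i)}$ for every $i$, and the conclusion becomes $|\alpha_i-\alpha_{\sigma(i)}|\leq \epsilon$ for each $i$.

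The central calculation specializes the hypothesis at the pair $(\sigma^{-1}(i),i)$: because $\beta_{\sigma^{-1}(i)}=\alpha_i$ and $\beta_i=\alpha_{\sigma(i)}$, clearing denominators in
\[
\frac{\alpha_{\sigma^{-1}(i)}}{\alpha_i}+\frac{\beta_i}{\beta_{\sigma^{-1}(i)}}\geq 2-\epsilon
\]
yields $\alpha_{\sigma^{-1}(i)}+\alpha_{\sigma(i)}\geq (2-\epsilon)\,\alpha_i$. When both $\alpha_{\sigma^{-1}(i)}$ and $\alpha_{\sigma(i)}$ are bounded by $\alpha_i$ (that is, when $i$ achieves a local maximum of $\alpha$ along its orbit under $\sigma$), each summand is itself $\leq \alpha_i$ and therefore must be $\geq (1-\epsilon)\alpha_i$; the normalization $\alpha_i\leq 1$ then gives $|\alpha_i-\alpha_{\sigma(i)}|\leq \epsilon\alpha_i\leq \epsilon$. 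This disposes of the maximum of $\alpha$ on every cycle of $\sigma$ (and symmetrically of its $\sigma$-preimage).

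For a general $i$ the plan is to traverse the orbit of $i$ under $\sigma$ until a local-maximum position is reached (which happens in finitely many steps since the orbit is finite) and to transport the estimate back to $i$ by applying the hypothesis at cross pairs $(l,l')$ whose positions bracket the local maximum of the cycle. The key is that the monotonicity of $\alpha$ along each arm of the cycle leading to its maximum forces one of the two summands in the hypothesis to be automatically $\leq 1$, so the other is $\geq 1-\epsilon$ and converts, via $\alpha_n\leq 1$, into the per-edge bound $\epsilon$ with no loss of constant.

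The main obstacle is the bookkeeping for this cross-pair selection on cycles of length four or more: one must identify, for each edge of the cycle, an index pair whose extraneous summand is automatically $\leq 1$. Cycles of length $\leq 3$ are immediate since every non-maximal index is adjacent to the local maximum, but for longer cycles one must pair indices sitting on opposite sides of the maximum so that the comparison of $\alpha_{\sigma(l)}$ and $\alpha_{\sigma(l')}$ supplies the required inequality without accumulating any $\epsilon$-loss along the chain.
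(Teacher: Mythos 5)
Your opening reduction is false, and everything downstream is built on it. The hypothesis $\left\{ \alpha _{1},...,\alpha _{n}\right\} \subset \left\{ \beta _{1},...,\beta _{n}\right\} $ is an inclusion of \emph{sets}, and since the $\alpha _{i}$ are only assumed non-decreasing they may repeat; when they do, the set of $\alpha $-values has fewer than $n$ elements and the $\beta _{j}$ may take values that occur among no $\alpha _{i}$. Concretely, $\alpha =(\tfrac{1}{2},\tfrac{1}{2},1)$ and $\beta =(\tfrac{1}{2},\tfrac{1}{2}+\delta ,1)$ satisfy all the hypotheses once $\epsilon \geq 2\delta $, yet the two families are not equal as multisets and no permutation $\sigma $ with $\beta _{i}=\alpha _{\sigma (i)}$ exists. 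This is not a repairable technicality: the whole point of stating the lemma with inclusion rather than equality (see the remark following it) is to cover exactly this situation, which arises in the non-invertible case where $\sigma (Q_{n})\subset \sigma (P_{n})$ may be strict. Without the permutation, the identity $\beta _{\sigma ^{-1}(i)}=\alpha _{i}$, the orbit traversal, and the cycle analysis all lose their meaning. Moreover, even granting the permutation, your argument is incomplete by your own account: the local-maximum case is handled, but the ``transport back to $i$'' along cycles of length $\geq 4$ is only announced, and the assertion that the cross-pair selection accumulates no $\epsilon $-loss is precisely what would need proof (chaining per-edge bounds naively along a cycle of length $k$ yields $k\epsilon $, not $\epsilon $).

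The paper's proof is far shorter and uses the given linear order on the $\alpha _{i}$ instead of any cycle structure. For $i<j$ one has $\alpha _{i}/\alpha _{j}\leq 1$, so the hypothesis forces $\beta _{j}/\beta _{i}\geq 1-\epsilon $ and hence $\beta _{i}-\beta _{j}\leq \epsilon \beta _{i}\leq \epsilon $ (with the normalization $\beta _{i}\leq 1$ in force in the application); this single inequality is the whole engine. Then, for each $i$ with $\alpha _{i}\neq \beta _{i}$, the set inclusion is used only to produce some index $j\neq i$ with $\beta _{j}=\alpha _{i}$ sitting on the correct side of $i$ (three cases: $i=1$, $i=n$, $1<i<n$), so that $\left\vert \alpha _{i}-\beta _{i}\right\vert $ is rewritten as a difference $\beta _{k}-\beta _{l}$ with $k<l$ and is therefore $\leq \epsilon $. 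You should redo your argument along these lines; in particular, note that the monotonicity you need is that of the given indexing of the $\alpha _{i}$, not monotonicity along orbits of a permutation.
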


\begin{proof}
From the hypothesis, we deduce easily that $\beta _{i}-\beta _{_{j}}\leq
\epsilon $, if $i<j.$

Let $i\in \left\{ 1,...,n\right\} $ such that $\alpha _{_{i}}\neq \beta
_{_{i}}$ (in the case $\alpha _{_{i}}=\beta _{_{i}}$, of course we have $%
\left\vert \alpha _{_{i}}-\beta _{_{i}}\right\vert =0\leq \epsilon $).

There are three cases: $i=1,\ i=n,\ $and $1<i<n.$

Case 1. $i=1$. There exists $j\geq 2$ such that $\beta _{_{j}}=\alpha
_{_{1}} $. So, we have, $\left\vert \alpha _{1}-\beta _{1}\right\vert =\beta
_{1}-\beta _{j}\leq \epsilon $, since $j>1$.

Case 2.$\ i=n.\ $There exists $j<n$ such that $\beta _{_{j}}=\alpha _{_{n}}$%
. Hence, $\left\vert \alpha _{n}-\beta _{n}\right\vert =\beta _{j}-\beta
_{n}\leq \epsilon $, since $j<n$.

Case 3. $1<i<n.$

If $\alpha _{_{i}}<\beta _{_{i}}$, then there exists $j>i$, such that $\beta
_{_{j}}\leq \alpha _{_{i}}$.\ Hence, $\left\vert \alpha _{_{i}}-\beta
_{_{i}}\right\vert \leq \beta _{i}-\beta _{_{j}}\leq \epsilon $, since $i<j.$

If $\alpha _{_{i}}>\beta _{_{i}}$, then there exists $j<i$, such that $\beta
_{_{j}}\geq \alpha _{_{i}}$.\ Hence, $\left\vert \alpha _{_{i}}-\beta
_{_{i}}\right\vert \leq \beta _{j}-\beta _{i}\leq \epsilon $, since $i>j.$
\end{proof}

\begin{remark}
In the original paper $[12]$, the above lemma is $Lemma\ 3.5$, but it is
given in a particular case with equality instead of inclusion, and where the
sequence $\left\{ \alpha _{_{1}},...,\alpha _{_{n}}\right\} $ is increasing
instead of non-decreasing; but the proof remains unchanged. In the
particular case, the $Lemma\ 3.5$ is needed only for invertible case.
\end{remark}

\begin{lemma}
Let $P,\ Q$ be two invertible positive operators in $\mathfrak{B}(H)$ such
that $\sigma (Q)\subset \sigma (P)$ or $\sigma (P)\subset \sigma (Q)$. Then,
the two following properties are equivalent

$(i)\ \forall X\in \mathfrak{B}(H),\ \left\Vert PXP^{-1}\right\Vert
+\left\Vert Q^{-1}XQ\right\Vert \geq 2\left\Vert X\right\Vert ,$

$(ii)\ P=Q.$
\end{lemma}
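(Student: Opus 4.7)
The implication $(ii) \Rightarrow (i)$ is immediate: with $P = Q$, the inequality $(i)$ is precisely inequality $(3')$ of Proposition 2 applied to the normal operator $P$ (positive and invertible, hence normal).

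For $(i) \Rightarrow (ii)$, the first step is to apply Lemma 7 directly to the hypothesis and obtain $\{P\}' = \{Q\}'$. In particular $PQ = QP$, and the bicommutant theorem yields $Q = f(P)$ for some positive bounded Borel function $f$ on $\sigma(P)$ whose essential range coincides with $\sigma(Q)$. The plan is then to test the inequality against rank-one operators $X = u \otimes v$, where $u \in \mathrm{Ran}\,E_P(\Delta_\lambda)$ and $v \in \mathrm{Ran}\,E_P(\Delta_\mu)$ are unit vectors supported on small intervals $\Delta_\lambda \ni \lambda$, $\Delta_\mu \ni \mu$ with $\lambda,\mu \in \sigma(P)$. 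By Luzin's theorem applied to $f$ against the scalar spectral measure of $P$, one may restrict to a Borel set $\Sigma \subseteq \sigma(P)$ of full spectral measure on which $f$ is continuous, so that $u,v$ may be chosen as simultaneous approximate eigenvectors of $P$ (with eigenvalues $\lambda,\mu$) and of $Q=f(P)$ (with eigenvalues $f(\lambda),f(\mu)$). Using the identities $\|PXP^{-1}\| = \|Pu\|\,\|P^{-1}v\|$ and $\|Q^{-1}XQ\| = \|Q^{-1}u\|\,\|Qv\|$, and shrinking $\Delta_\lambda,\Delta_\mu$ to points, the hypothesis $(i)$ yields
\begin{equation*}
\frac{\lambda}{\mu} + \frac{f(\mu)}{f(\lambda)} \geq 2 \qquad (\lambda,\mu \in \Sigma).
\end{equation*}

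The final step is to feed this into the combinatorial Lemma 8. For $\delta > 0$, pick a finite $\delta$-net $F=\{t_1,\dots,t_n\} \subseteq \Sigma$ in $\sigma(P)$; for each $k$, select $t'_k \in F$ with $|f(t_k)-t'_k|<\delta$ (possible since $f(t_k) \in \sigma(Q) \subseteq \sigma(P)$ and $F$ is $\delta$-dense). Let $\pi$ be the permutation sorting $(t'_k)$ in increasing order, and normalize $P,Q$ by uniform scaling so that $\max_k t'_k \leq 1$. Setting $\alpha_k = t'_{\pi(k)}$ and $\beta_k = t_{\pi(k)}$, one checks that $\{\alpha_k\} \subseteq F = \{\beta_k\}$, and the displayed inequality (plus the $O(\delta)$ error incurred when replacing $f(t_{\pi(i)})$ by $t'_{\pi(i)}$ in the ratio) is exactly the hypothesis of Lemma 8 with $\epsilon = O(\delta)$. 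Its conclusion $|\alpha_k - \beta_k| \leq \epsilon$ then reads $|f(t_k) - t_k| = O(\delta)$ for every $t_k \in F$. Letting $F$ become progressively denser in $\Sigma$ and $\delta \to 0$, continuity of $f|_{\Sigma}$ forces $f = \mathrm{id}$ on $\Sigma$, and since $\Sigma$ has full spectral measure, $Q = f(P) = P$.

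The main obstacle is the $f$-closure issue in the last step: a finite subset of $\sigma(P)$ is not automatically closed under $f$, so the Lemma 8 inclusion $\{\alpha_k\} \subseteq \{\beta_k\}$ cannot be arranged exactly and must be obtained up to a $\delta$-approximation. This is precisely the reason Lemma 8 is formulated with slack $\epsilon > 0$ and with set \emph{inclusion} (not equality), as highlighted in Remark 2 and in the introductory discussion of the corrected Theorem $6.3$. The hypothesis $\sigma(Q)\subseteq\sigma(P)$ is indispensable here: it guarantees that each $f(t_k)$ falls in $\sigma(P)$ and hence lies within $\delta$ of some element of the $\delta$-net $F$. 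A secondary technical point is the Luzin reduction ensuring that the approximate eigenvectors of $P$ can simultaneously be taken approximate eigenvectors of $Q$, but this is routine once $\{P\}'=\{Q\}'$ is known.
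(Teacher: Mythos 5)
Your proof is correct in substance and rests on the same two pillars as the paper's argument: the commutant lemma (the paper's Lemma~3, giving $\{P\}'=\{Q\}'$ from the hypothesis) and the combinatorial lemma with set \emph{inclusion} between the finite spectra (the paper's Lemma~4). Where you genuinely diverge is in how the problem is reduced to finite spectral data. The paper discretizes the \emph{operators}: it sets $P_n=h_n(P)$ and $Q_n=h_n(Q)$ for the step functions $h_n(\lambda)=k/n$ on $[k/n,(k+1)/n)$, so that $P_n,Q_n$ have finite spectra, lie in the common bicommutant and hence share spectral projections $E_i$, and satisfy $\sigma(Q_n)=h_n(\sigma(Q))\subset h_n(\sigma(P))=\sigma(P_n)$ automatically; testing the (slightly relaxed) inequality on $E_iXE_j$ then feeds directly into the combinatorial lemma and yields $\Vert P_n-Q_n\Vert\le\epsilon$. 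You instead discretize the \emph{spectrum}: you write $Q=f(P)$ via the bicommutant, extract the pointwise inequality $\frac{\lambda}{\mu}+\frac{f(\mu)}{f(\lambda)}\ge 2$ from rank-one tests on approximate eigenvectors, and then run a $\delta$-net through the combinatorial lemma. Both routes work and both use the spectral inclusion in the same decisive way; the paper's version gets the inclusion at the discrete level for free and avoids all measure-theoretic bookkeeping, while yours makes more transparent exactly where $\sigma(Q)\subset\sigma(P)$ enters (locating $f(t_k)$ near the net).

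The one step you should tighten is the Luzin reduction, which is less routine than you suggest. A bounded Borel $f$ with $Q=f(P)$ is determined only up to sets of spectral measure zero, so a pointwise value $f(t_k)$ need \emph{not} lie in $\sigma(Q)=\operatorname{ess\,ran}f$; and Luzin's theorem does not furnish a single full-measure set on which $f$ is continuous, only compacta $K_m$ with complement of measure less than $1/m$ on each of which $f$ restricts continuously. To close the argument you should work on $S_m=\operatorname{supp}(\mu|_{K_m})$: for $t\in S_m$, vectors in the range of $E_P(\Delta_t\cap K_m)$ are simultaneously approximate eigenvectors of $P$ at $t$ and of $Q$ at $f(t)$, and $f(t)$ is then a limit of essential values of $f$, hence genuinely in $\sigma(Q)\subset\sigma(P)$, so that a $t'_k$ within $\delta$ of $f(t_k)$ exists in your net. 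Since $\bigcup_m S_m$ has full spectral measure it is dense in $\sigma(P)$ and supplies the nets; with this repair the proof closes exactly as you describe.
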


\begin{proof}
We may assume without loss of the generality that $\left\Vert P\right\Vert
=\left\Vert Q\right\Vert =1.$

$(i)\Rightarrow (ii).\ $Assume $(i)$ holds.

Decompose $P$ and $Q$ using their spectral measure 
\begin{equation*}
P=\dint \lambda dE_{_{\lambda }},\ \ \ Q=\dint \lambda dF\lambda
\end{equation*}%
and consider 
\begin{equation*}
P_{_{n}}=\dint h_{_{n}}\left( \lambda \right) dE_{_{\lambda
}}=h_{_{n}}\left( P\right) ,\ \ \ Q_{_{n}}=\dint h_{_{n}}\left( \lambda
\right) dF\lambda =h_{_{n}}\left( Q\right) .
\end{equation*}%
$\ $where $h_{_{n}}\left( \lambda \right) $ is the function defined by 
\begin{equation*}
h_{_{n}}\left( \lambda \right) =\frac{k}{n},\text{if }\frac{k}{n}\leq
\lambda <\frac{k+1}{n},\text{ for }k=1,\ 2,\ 3,...
\end{equation*}

Case 1. $\sigma (Q)\subset \sigma (P).$ Using the spectral theorem with the
function $h_{_{n}}$, we have 
\begin{equation*}
\sigma \left( Q_{_{n}}\right) =\sigma \left( h_{_{n}}\left( Q\right) \right)
=h_{_{n}}\left( \sigma (Q)\right) \subset h_{_{n}}\left( \sigma (P)\right)
=\sigma \left( h_{_{n}}\left( P\right) \right) =\sigma \left( P_{_{n}}\right)
\end{equation*}

Then $P_{_{n}}$,$\ Q_{_{n}}$ are invertible positive operators in $\mathfrak{%
B}(H)$ with finite spectrum such that $\sigma \left( Q_{_{n}}\right) \subset
\sigma \left( P_{_{n}}\right) $, $P_{_{n}}\rightarrow P,\
Q_{_{n}}\rightarrow Q$ uniformly, and $P_{_{n}}\in \left\{ P\right\}
^{^{\prime \prime }},\ Q_{_{n}}\in \left\{ Q\right\} ^{^{\prime \prime }}$
(where $\left\{ P\right\} ^{^{\prime \prime }}=\left\{ Q\right\} ^{^{\prime
\prime }}$, from the $Lemma\ 3$).

Hence, $P_{_{n}}Q_{_{n}}=Q_{_{n}}P_{_{n}}$, for every $n\geq 1$. Then, $%
Q_{_{n}}=\sum_{i=}^{p}\alpha _{_{i}}E_{i}$, $P_{_{n}}=\sum_{i=}^{p}\beta
_{_{i}}E_{i}$, where $\sigma \left( Q_{_{n}}\right) =\left\{ \alpha
_{_{1}},..,\alpha _{_{p}}\right\} $ such that $0\,<\alpha _{_{1}}\leq
...\leq \alpha _{_{p}}\leq 1$, $\sigma \left( P_{_{n}}\right) =\left\{ \beta
_{_{1}},..,\beta _{_{p}}\right\} $, and $E_{_{1}},...,E_{_{p}}$ are
orthogonal projections in $\mathfrak{B}(H)$ such that $E_{_{i}}E_{_{j}}=0$,
if $i\neq j$, $\ \sum_{i=1}^{p}E_{_{i}}=I.$ Thus, $\left\{ \alpha
_{_{1}},..,\alpha _{_{p}}\right\} \subset \left\{ \beta _{_{1}},..,\beta
_{_{p}}\right\} .$

Let $\epsilon >0$. Then, there exists an integer $N\geq 1$ such that 
\begin{equation*}
\forall n>N,\ \forall X\in \mathfrak{B}(H),\ \left\Vert
P_{_{n}}XP_{_{n}}^{-1}\right\Vert +\left\Vert
Q_{_{n}}^{-1}XQ_{_{n}}\right\Vert \geq \left( 2-\epsilon \right) \left\Vert
X\right\Vert .
\end{equation*}

Let $n>N$, and replace $X$ by $E_{_{i}}XE_{_{j}}$ (where $X\in \mathfrak{B}%
(H)$) in this last inequality$,$ we deduce that 
\begin{equation*}
\frac{\alpha _{_{i}}}{\alpha _{j}}+\frac{\beta _{j}}{\beta _{i}}\geq
2-\epsilon ,\ \text{for }i,j=1,....,p.
\end{equation*}

From these last inequalities, and since $0\,<\alpha _{_{1}}\leq ...\leq
\alpha _{_{p}}\leq 1$, $\left\{ \alpha _{_{1}},..,\alpha _{_{p}}\right\}
\subset \left\{ \beta _{_{1}},..,\beta _{_{p}}\right\} $, and using the
above lemma, we obtain $\left\vert \alpha _{_{i}}-\beta _{i}\right\vert \leq
\epsilon $, for $i=1,....,p.$

Since, $P_{_{n}}=\sum_{i=}^{p}\alpha _{_{i}}E_{i}$, and $Q_{_{n}}=%
\sum_{i=}^{p}\beta _{_{i}}E_{i}$, then 
\begin{eqnarray*}
\left\Vert P_{_{n}}-Q_{_{n}}\right\Vert &=&\underset{1\leq i\leq p_{_{n}}}{%
\max }\left\vert \alpha _{_{i}}-\beta _{i}\right\vert \\
&\leq &\epsilon .
\end{eqnarray*}%
Therefore, $P=Q$.

Case 2. $\sigma (Q)\subset \sigma (P).$ Using the same argument as used
before, we find also $P=Q.$

The implication $(ii)\Rightarrow (i)$ follows immediately from $(N-AGMI).$
\end{proof}

\begin{remark}
The above lemma in the original paper $[12]$ is the $Theorem\ 3.6$ but with
equality between spectrum of $P$ and $Q$ instead of the inclusion. The
equality condition is enough for the invertible case. But for the non
invertible case, the lemma presented here with inclusion is needed.
\end{remark}

In the two next propositions, we shall present some characterizations of the
class of all invertible normal operators in $\mathfrak{B}(H)$.

\begin{proposition}
$[15]\ $Let $S$ be an invertible operator in $\mathfrak{B}(H)$. Then, the
following properties are equivalent:

$(i)$ $S$ is normal,

$(ii)$ $\forall X\in \mathfrak{B}(H),\ \left\Vert SXS^{-1}\right\Vert
+\left\Vert S^{-1}XS\right\Vert =\left\Vert S^{\ast }XS^{-1}\right\Vert
+\left\Vert S^{-1}XS^{\ast }\right\Vert ,$

$(iii)$ $\forall X\in \mathfrak{B}(H),\ \left\Vert SXS^{-1}\right\Vert
+\left\Vert S^{-1}XS\right\Vert \geq \left\Vert S^{\ast }XS^{-1}\right\Vert
+\left\Vert S^{-1}XS^{\ast }\right\Vert .$

$(iv)$ $\forall X\in \mathfrak{B}(H),\ \left\Vert SXS^{-1}\right\Vert
+\left\Vert S^{-1}XS\right\Vert \geq 2\left\Vert X\right\Vert .$
\end{proposition}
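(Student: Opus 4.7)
The implications $(i)\Rightarrow(ii)\Rightarrow(iii)\Rightarrow(iv)$ are short. For $(i)\Rightarrow(ii)$, I would write the polar decomposition $S=U|S|$, where $U$ is unitary because $S$ is invertible and $U|S|=|S|U$ because $S$ is normal. Then $S^{*}=U^{*}|S|$ and $S^{-1}=U^{*}|S|^{-1}$, and a direct computation using unitary invariance of the operator norm shows that each of the four summands appearing in $(ii)$ equals either $\||S|X|S|^{-1}\|$ or $\||S|^{-1}X|S|\|$, so the two sides of $(ii)$ coincide. The step $(ii)\Rightarrow(iii)$ is trivial, and $(iii)\Rightarrow(iv)$ follows by chaining the hypothesis with inequality $(6^{\prime})$ of Corollary~1.

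The heart of the proof is $(iv)\Rightarrow(i)$, and the plan is to recast $(iv)$ as the hypothesis of Lemma~5 with $P=|S|$ and $Q=|S^{*}|$. Using the elementary identities $\|AY\|=\||A|Y\|$ (immediate from $\|AYx\|^{2}=\langle|A|^{2}Yx,Yx\rangle$) and $\|YA\|=\|Y|A^{*}|\|$ (obtained by adjoining), together with $|S^{-*}|=|S|^{-1}$ and $|S^{-1}|=|S^{*}|^{-1}$, one successively rewrites
\[
\|SXS^{-1}\|=\||S|X|S|^{-1}\|,\qquad \|S^{-1}XS\|=\||S^{*}|^{-1}X|S^{*}|\|.
\]
Hypothesis $(iv)$ therefore becomes
\[
\forall X\in\mathfrak{B}(H),\qquad \|PXP^{-1}\|+\|Q^{-1}XQ\|\geq 2\|X\|,
\]
which is exactly the operator inequality appearing in Lemma~5.

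It remains to check the spectral hypothesis of Lemma~5. Since $S$ is invertible we have $SS^{*}=S(S^{*}S)S^{-1}$, so $S^{*}S$ and $SS^{*}$ are similar and share their spectrum; taking positive square roots yields $\sigma(P)=\sigma(|S|)=\sigma(|S^{*}|)=\sigma(Q)$. Lemma~5 then delivers $P=Q$, i.e.\ $|S|=|S^{*}|$, equivalently $S^{*}S=SS^{*}$, so $S$ is normal.

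The main obstacle is the bookkeeping in the rewriting of the two summands of $(iv)$ as $\|PXP^{-1}\|$ and $\|Q^{-1}XQ\|$, because $S$ and $S^{-1}$ appear on opposite sides of $X$ and one must commute through the polar factor; once this reduction is achieved, the spectral condition is automatic from invertibility and Lemma~5 closes the proof.
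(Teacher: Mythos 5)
Your proposal is correct and follows essentially the same route as the paper: the easy chain $(i)\Rightarrow(ii)\Rightarrow(iii)\Rightarrow(iv)$ via Remark~1 and inequality $(6^{\prime})$ of Corollary~1, and then $(iv)\Rightarrow(i)$ by reducing to $\left\Vert PXP^{-1}\right\Vert +\left\Vert Q^{-1}XQ\right\Vert \geq 2\left\Vert X\right\Vert$ with $P=\left\vert S\right\vert$, $Q=\left\vert S^{\ast}\right\vert$, verifying $\sigma(P)=\sigma(Q)$ from the similarity of $S^{\ast}S$ and $SS^{\ast}$, and invoking Lemma~5 to get $P=Q$. Your rewriting of the two summands is a correct, slightly more explicit version of the paper's appeal to the polar decompositions $S=UP$, $S^{\ast}=U^{\ast}Q$.
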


\begin{proof}
$(i)\Rightarrow (ii).$ This follows from $Remark\ 1$.

$(ii)\Rightarrow (iii).$ This implication is trivial.

$(iii)\Rightarrow (iv).$ This follows immediately from $Corollary\
1.6^{\prime }.$

$(iv)\Rightarrow (i).$ Assume $(iv)$ holds.

Let $S=UP$, $S^{\ast }=U^{\ast }Q$ be the polar decompositions of $S$ and $%
S^{\ast }$, and let $X\in \mathfrak{B}(H)$. Then, from $(iv)$, it follows
that:%
\begin{equation*}
\forall X\in \mathfrak{B}(H),\ \left\Vert PXP^{-1}\right\Vert +\left\Vert
Q^{-1}XQ\right\Vert \geq 2\left\Vert X\right\Vert .
\end{equation*}

Since, $\sigma (P^{2})=\sigma (S^{\ast }S)=\sigma (SS^{\ast })=\sigma
(Q^{2}),$ so from the Spectral Theorem, $\sigma (P)=\sigma (Q).$ Using the
last Lemma, we obtain $P=Q.$ Therefore, $S$ is normal.
\end{proof}

\begin{proposition}
Let $S$ be an invertible operator in $\mathfrak{B}(H)$. The following
properties are equivalent:

$(i)$ $S$ is normal,

$(ii)$ $\forall X\in \mathcal{F}_{_{1}}(H),\ \left\Vert SXS^{-1}\right\Vert
+\left\Vert S^{-1}XS\right\Vert =\left\Vert S^{\ast }XS^{-1}\right\Vert
+\left\Vert S^{-1}XS^{\ast }\right\Vert ,$

$(iii)$ $\forall X\in \mathcal{F}_{_{1}}(H),\ \left\Vert SXS^{-1}\right\Vert
+\left\Vert S^{-1}XS\right\Vert \geq \left\Vert S^{\ast }XS^{-1}\right\Vert
+\left\Vert S^{-1}XS^{\ast }\right\Vert ,$

$(iv)\ \forall X\in \mathcal{F}_{_{1}}(H),\ \left\Vert SXS^{-1}\right\Vert
+\left\Vert S^{-1}XS\right\Vert \geq 2\left\Vert X\right\Vert ,$

$(v)$ $\forall X\in \mathfrak{B}(H),\left\Vert SXS^{-1}\right\Vert
+\left\Vert S^{-1}XS\right\Vert \leq \left\Vert S^{\ast }XS^{-1}\right\Vert
+\left\Vert S^{-1}XS^{\ast }\right\Vert ,$

$(vi)$ $\forall X\in \mathcal{F}_{_{1}}(H),\left\Vert SXS^{-1}\right\Vert
+\left\Vert S^{-1}XS\right\Vert \leq \left\Vert S^{\ast }XS^{-1}\right\Vert
+\left\Vert S^{-1}XS^{\ast }\right\Vert .$
\end{proposition}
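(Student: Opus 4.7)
I plan to prove the equivalences along two cycles: $(i) \Rightarrow (ii) \Rightarrow (iii) \Rightarrow (iv) \Rightarrow (i)$ and $(i) \Rightarrow (v) \Rightarrow (vi) \Rightarrow (i)$. For $(i) \Rightarrow (ii)$ and $(i) \Rightarrow (v)$ I invoke Remark 1: normality of $S$ gives $\|SY\| = \|S^{*}Y\|$ and $\|YS\| = \|YS^{*}\|$ for every $Y$, and the choices $Y = XS^{-1}$ and $Y = S^{-1}X$ produce $\|SXS^{-1}\| = \|S^{*}XS^{-1}\|$ and $\|S^{-1}XS\| = \|S^{-1}XS^{*}\|$ for every $X \in \mathfrak{B}(H)$, in particular on $\mathcal{F}_{1}(H)$, which is exactly the equality claimed by (ii) and (v). The steps $(ii) \Rightarrow (iii)$ and $(v) \Rightarrow (vi)$ are trivial (equality and restriction), and $(iii) \Rightarrow (iv)$ follows by chaining (iii) with the rank-one instance of inequality $(6')$ of Corollary 2.

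For the core implication $(iv) \Rightarrow (i)$ I mirror the proof of the previous Proposition but restricted to rank-one $X$. Writing the polar decompositions $S = UP$ and $S^{*} = U^{*}Q$, the same substitution used there rewrites the rank-one inequality from (iv) as $\|PXP^{-1}\| + \|Q^{-1}XQ\| \geq 2\|X\|$ for every $X \in \mathcal{F}_{1}(H)$. Since $\sigma(P^{2}) = \sigma(S^{*}S) = \sigma(SS^{*}) = \sigma(Q^{2})$, the spectral theorem gives $\sigma(P) = \sigma(Q)$, and I intend to run the proof of the last Lemma above to conclude $P = Q$: the pivotal substitution $X = E_{i} X_{0} E_{j}$ in that proof can be carried out with $X_{0} = u \otimes v$ rank-one (take $u \in E_{i}H$ and $v \in E_{j}H$ of unit norm), so that $E_{i} X_{0} E_{j}$ remains in $\mathcal{F}_{1}(H)$ and the decisive estimate $\alpha_{i}/\alpha_{j} + \beta_{j}/\beta_{i} \geq 2 - \epsilon$ still goes through.

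For $(vi) \Rightarrow (i)$ I expand (vi) on $X = u \otimes v$; after cancelling common factors it reads $(\|Su\| - \|S^{*}u\|)\|S^{*-1}v\| \leq \|S^{-1}u\|(\|Sv\| - \|S^{*}v\|)$ for all $u,v \in H$. Interchanging $u \leftrightarrow v$ and adding produces the symmetric inequality $f(u)h(v) + f(v)h(u) \leq 0$ with $f(w) = \|Sw\| - \|S^{*}w\|$ and $h(w) = \|S^{*-1}w\| - \|S^{-1}w\|$. Combined with the identities $\|Sw\|^{2} - \|S^{*}w\|^{2} = \langle (P^{2} - Q^{2})w, w\rangle$ and its analogue for $P^{-2} - Q^{-2}$, together with the unitary equivalence $Q = UPU^{*}$, I expect this to force $P^{2} = Q^{2}$, hence $P = Q$ and $S$ normal. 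The principal obstacle in both $(iv) \Rightarrow (i)$ and $(vi) \Rightarrow (i)$ is recovering the commutativity $PQ = QP$ from a hypothesis restricted to $\mathcal{F}_{1}(H)$: the original proof of Lemma 3 leans on test operators $X - \alpha I$, which are not of rank one, so the spectral approximation step used inside the last Lemma must be rebuilt carefully on rank-one inputs --- and this is precisely where the author's announced refinement (spectral inclusion rather than equality) is needed.
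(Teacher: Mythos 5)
Your easy implications ($(i)\Rightarrow(ii)\Rightarrow(iii)\Rightarrow(iv)$ via Remark 1 and Corollary $1.6'$, and $(i)\Rightarrow(v)\Rightarrow(vi)$) match the paper. The two hard implications, however, both have genuine gaps. For $(iv)\Rightarrow(i)$ you correctly reduce to $\left\Vert PXP^{-1}\right\Vert +\left\Vert Q^{-1}XQ\right\Vert \geq 2\left\Vert X\right\Vert$ for rank-one $X$, with $P=\left\vert S\right\vert$, $Q=\left\vert S^{\ast }\right\vert$ unitarily equivalent, but then you propose to ``run the proof of Lemma 5'' on rank-one inputs while yourself conceding that the commutativity step (Lemma 3, which tests against $X-\alpha I$) cannot be carried out there. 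That concession is the whole problem: without $\left\{ P\right\}'=\left\{ Q\right\}'$ you cannot simultaneously diagonalize the spectral approximants $P_{n},Q_{n}$, so the substitution $X=E_{i}X_{0}E_{j}$ never becomes available. The ``inclusion versus equality of spectra'' refinement you point to is irrelevant here --- it addresses the non-invertible case, not the restriction to $\mathcal{F}_{_{1}}(H)$. The paper closes this gap by a different means entirely: it invokes the external result $[10,\ Theorem\ 2.1]$ (Magajna--Petkov\v{s}ek--Turn\v{s}ek), which is tailored precisely to unitarily equivalent invertible positive operators satisfying the rank-one inequality, and concludes $P=Q$ directly. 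Your plan, as written, does not produce a proof.

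For $(vi)\Rightarrow(i)$ your starting inequality $(A)$ agrees with the paper's, but your symmetrization $f(u)h(v)+f(v)h(u)\leq 0$ is too weak: on the diagonal it only yields the sign condition $\left( \left\Vert Su\right\Vert -\left\Vert S^{\ast }u\right\Vert \right) \left( \left\Vert (S^{-1})^{\ast }u\right\Vert -\left\Vert S^{-1}u\right\Vert \right) \leq 0$, and the claim that this ``forces $P^{2}=Q^{2}$'' is asserted, not proved. The paper's argument goes the other way: from $(A)$ it first extracts the dichotomy that $\left\Vert Sx\right\Vert -\left\Vert S^{\ast }x\right\Vert$ has constant sign on the unit sphere (the left factor $\left\Vert (S^{\ast })^{-1}y\right\Vert$ and right factor $\left\Vert S^{-1}x\right\Vert$ are strictly positive, so a sign change would contradict $(A)$); then, assuming say $\left\Vert Sx\right\Vert \geq \left\Vert S^{\ast }x\right\Vert$ everywhere, it bounds the two positive factors by $\frac{1}{\left\Vert S\right\Vert }$ and $\left\Vert S^{-1}\right\Vert$ to get $\left\Vert Sx\right\Vert -\left\Vert S^{\ast }x\right\Vert \leq k\left( \left\Vert Sy\right\Vert -\left\Vert S^{\ast }y\right\Vert \right)$ with $k=\left\Vert S\right\Vert \left\Vert S^{-1}\right\Vert$ uniformly in $y$, and finally takes the infimum over $y$ (which is $0$ because $\left\Vert S\right\Vert =\left\Vert S^{\ast }\right\Vert$) to force the reverse inequality and hence equality. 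You would need to supply this, or an equivalent mechanism, to complete the implication.
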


\begin{proof}
The implication $(i)\Longrightarrow (ii)$ follows immediately from $Remark\
1 $, the implication $(ii)\Longrightarrow (iii)$ is trivial, and the
implication $(iii)\Longrightarrow (iv)$ follows from $Corollary\ 1.6^{\prime
}$.

$(iv)\Longrightarrow (i)$. Assume $(iv)$ holds.

Put $P=\left\vert S\right\vert $ and $Q=\left\vert S^{\ast }\right\vert $.\
Using $(iv)$ and the polar decomposition of $S$ and $S^{\ast }$, we deduce
the following inequality:%
\begin{equation*}
\forall X\in \mathcal{F}_{_{1}}(H),\ \left\Vert PXP^{-1}\right\Vert
+\left\Vert Q^{-1}XQ\right\Vert \geq 2\left\Vert X\right\Vert .
\end{equation*}

Since $P$ and $Q$ are unitarily equivalent, using $[10,\ Theorem\ 2.1]$, we
find that $P=Q$. This proves $(i)$.

Thus, the four conditions $(i)-(iv)$ are mutually equivalent.

$(i)\Rightarrow (v).$ This implication follows immediately from $Remark\ 1.$

$(v)\Rightarrow (vi).$ This implication is trivial.

$(vi)\Rightarrow (i).$ From $(vi)$, it follows that the following inequality
holds%
\begin{equation*}
\forall x,\ y\in (H)_{_{1}},\ \left\Vert Sx\right\Vert \left\Vert (S^{\ast
})^{-1}y\right\Vert +\left\Vert S^{-1}x\right\Vert \left\Vert S^{\ast
}y\right\Vert \leq \left\Vert S^{\ast }x\right\Vert \left\Vert (S^{\ast
})^{-1}y\right\Vert +\left\Vert S^{-1}x\right\Vert \left\Vert Sy\right\Vert
\end{equation*}

Hence%
\begin{equation*}
\forall x,\ y\in (H)_{_{1}},\ \left( \left\Vert Sx\right\Vert -\left\Vert
S^{\ast }x\right\Vert \right) \left\Vert (S^{\ast })^{-1}y\right\Vert \leq
\left( \left\Vert Sy\right\Vert -\left\Vert S^{\ast }y\right\Vert \right)
\left\Vert S^{-1}x\right\Vert \ \ \ (A)
\end{equation*}

Thus 
\begin{equation*}
\left( \forall x\in (H)_{1},\ \left\Vert Sx\right\Vert \geq \left\Vert
S^{\ast }x\right\Vert \right) \vee \left( \forall x\in (H)_{1},\ \left\Vert
Sx\right\Vert \leq \left\Vert S^{\ast }x\right\Vert \right)
\end{equation*}

Assume that the inequality $\left\Vert Sx\right\Vert \geq \left\Vert S^{\ast
}x\right\Vert $ holds for every $x\in (H)_{_{1}}.$

Since the relation $\frac{1}{\left\Vert T^{-1}\right\Vert }\leq \left\Vert
Tx\right\Vert \leq \left\Vert T\right\Vert $ holds for every invertible
operator $T\in \mathfrak{B}(H)$ and for every $x\in (H)_{_{1}}$, then from $%
(A),$ it follows that:%
\begin{equation*}
\forall x,\ y\in (H)_{_{1}},\ \left\Vert Sx\right\Vert -\left\Vert S^{\ast
}x\right\Vert \leq k\left( \left\Vert Sy\right\Vert -\left\Vert S^{\ast
}y\right\Vert \right)
\end{equation*}%
where $k=\left\Vert S\right\Vert \left\Vert S^{-1}\right\Vert $. So we have:%
\begin{equation*}
\forall x,\ y\in (H)_{_{1}},\ \left\Vert Sx\right\Vert +k\left\Vert S^{\ast
}y\right\Vert \leq \left\Vert S^{\ast }x\right\Vert +k\left\Vert
Sy\right\Vert
\end{equation*}

By taking the infimum over $y\in (H)_{_{1}}$, we obtain:%
\begin{equation*}
\forall x\in H,\ \left\Vert Sx\right\Vert \leq \left\Vert S^{\ast
}x\right\Vert .
\end{equation*}

Therefore $S$ is normal.

With the second assumption and by the same argument, we find also that $S$
is normal. This proves $(i)$.
\end{proof}

\begin{remark}
(1) The equivalences between $(i),\ (v)$, and $(vi)$ was given in $[17].$

(2) For $\mathcal{R\in }\left\{ \leq ,\ \geq ,\ =\right\} $, and $\mathfrak{L%
}(H)\in \left\{ \mathfrak{B}(H),\ \mathcal{F}_{_{1}}(H)\right\} $, then the
class of all invertible normal operators in $\mathfrak{B}(H)$ is exactly the
class of all invertible operators $S\in \mathfrak{B}(H)$ satisfying each of
the two following operator inequalities:%
\begin{eqnarray*}
\forall X &\in &\mathfrak{L}(H),\ \left\Vert SXS^{-1}\right\Vert +\left\Vert
S^{-1}XS\right\Vert \ \mathcal{\geq \ }2\left\Vert X\right\Vert , \\
\forall X &\in &\mathfrak{L}(H),\ \left\Vert SXS^{-1}\right\Vert +\left\Vert
S^{-1}XS\right\Vert \ \mathcal{R\ }\left\Vert S^{\ast }XS^{-1}\right\Vert
+\left\Vert S^{-1}XS^{\ast }\right\Vert .
\end{eqnarray*}
\end{remark}

In the next proposition, we shall give a complete characterization of the
class of all normal operators in $\mathfrak{B}(H)$ in terms of operator
inequality. To prove this, we need the following results of Halmos (see $[7]$%
) that says, the set 
\begin{equation*}
\mathfrak{D}(H)=\left\{ S\in \mathfrak{B}(H):S\text{ is left invertible or
right invertible}\right\}
\end{equation*}%
is dense in $\mathfrak{B}(H)$, and from the fact that for $S\in \mathfrak{B}%
(H),$ we have:

$(i)\ S$ is left invertible if and only if $S$ is injective with closed
range,

$(ii)$ $S$ is right invertible if and only if $S$ is surjective.

\begin{proposition}
$[19,\ 20]\ $Let $S\in \mathfrak{B}(H)$. Then, the following properties are
equivalent

$(i)$ $S$ is normal,

$(ii)$ $\forall X\in \mathfrak{B}(H),\ \left\Vert S^{2}X\right\Vert
+\left\Vert XS^{2}\right\Vert \geq 2\left\Vert SXS\right\Vert .$
\end{proposition}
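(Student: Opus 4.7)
The direction $(i)\Rightarrow (ii)$ is immediate: it is the instance $(4^{\prime})$ of Proposition 1 applied to a normal $S$.

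For $(ii)\Rightarrow (i)$, I would climb the hierarchy invertible $\rightarrow$ closed range $\rightarrow$ general that already organizes the paper. When $S$ is invertible, substituting $X=S^{-1}YS^{-1}$ into $(ii)$ collapses the three terms to $SYS^{-1}$, $S^{-1}YS$ and $Y$ respectively, so the hypothesis becomes
\[
\left\Vert SYS^{-1}\right\Vert +\left\Vert S^{-1}YS\right\Vert \geq 2\left\Vert Y\right\Vert \qquad (\forall Y\in \mathfrak{B}(H)),
\]
which is condition $(iv)$ of the invertible-normal characterization above (attributed to $[15]$); hence $S$ is normal. For $S$ of closed range but not invertible, I would run the analogous substitution with the Moore--Penrose inverse $S^{+}$ in place of $S^{-1}$, so as to reduce $(ii)$ to $(2^{\prime})$ of Proposition 1, and then invoke the closed-range characterization of $[18,2015]$ (corrected in $[19]$, which relies on Lemma 6 above in the version with spectral \emph{inclusion} rather than equality).

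For arbitrary $S$, my plan is to use Halmos's density of $\mathfrak{D}(H)$ in $\mathfrak{B}(H)$, and this is where I expect the principal obstacle. Because $(ii)$ is not stable under perturbations of $S$ and $T\mapsto T^{+}$ is not norm continuous, one cannot simply transport $(ii)$ or normality along an approximating sequence $T_{n}\to S$ with $T_{n}\in \mathfrak{D}(H)$. Instead I would set $P=\left\vert S\right\vert$ and $Q=\left\vert S^{\ast }\right\vert$ (for which $\sigma (P)\setminus \{0\}=\sigma (Q)\setminus \{0\}$ from general theory, so one of the two inclusions required by Lemma 6 automatically holds), and, via the polar decomposition $S=U\left\vert S\right\vert$, rewrite $(ii)$ as an inequality on the pair $(P,Q)$ of the form handled by Lemma 6 after the spectral regularization $P_{\epsilon }=P+\epsilon I$, $Q_{\epsilon }=Q+\epsilon I$. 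Lemma 6 then yields $P_{\epsilon }=Q_{\epsilon }$ for every $\epsilon >0$, hence $\left\vert S\right\vert =\left\vert S^{\ast }\right\vert $, which is $S$ normal; the density of $\mathfrak{D}(H)$ enters through Halmos's theorem to supply enough test operators to carry through the regularization step.
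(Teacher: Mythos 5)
Your direction $(i)\Rightarrow (ii)$ and your treatment of the invertible case of the converse are fine (the substitution $X=S^{-1}YS^{-1}$ is exactly how the paper finishes once invertibility is in hand). But the two reductions you propose beyond the invertible case both break down. For the closed-range step, substituting $X=S^{+}YS^{+}$ into $(ii)$ produces $\left\Vert S^{2}S^{+}YS^{+}\right\Vert +\left\Vert S^{+}YS^{+}S^{2}\right\Vert \geq 2\left\Vert SS^{+}YS^{+}S\right\Vert$, and $S^{2}S^{+}=S\left( SS^{+}\right) $ equals $S$ only when $\ker S^{\ast }\subset \ker S$; so $(ii)$ does not collapse to $(2^{\prime })$, and in this survey the closed-range characterization is in any case \emph{deduced from} the present proposition (see the remark after Corollary 3), so leaning on it here inverts the logical order. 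The more serious gap is the general case. You cannot rewrite $(ii)$ as an inequality in $P=\left\vert S\right\vert $ and $Q=\left\vert S^{\ast }\right\vert $: one has $\left\Vert S^{2}X\right\Vert =\left\Vert \left\vert S^{2}\right\vert X\right\Vert $, not $\left\Vert P^{2}X\right\Vert $, and the identity $\left\vert S^{2}\right\vert =\left\vert S\right\vert ^{2}$ is precisely one of the normality criteria being proved (Proposition 5), so invoking it is circular. Moreover, even granting such a rewriting, the hypothesis of Lemma 6 for the regularized pair $P+\epsilon I,\ Q+\epsilon I$ cannot be extracted from $(ii)$: the inequality is assumed for the single operator $S$, and there is no monotonicity transferring it to perturbed operators (the $\epsilon$-regularization in Proposition 1 goes the other way, from a universal statement about all normal invertibles down to a limit). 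Finally, Halmos's theorem approximates $S$ itself by left- or right-invertible operators; it does not supply "test operators" $X$.

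What the paper actually does in the general case, and what your plan is missing, is a dichotomy on the function $F(X)=\left\Vert S^{2}X\right\Vert +\left\Vert XS^{2}\right\Vert -2\left\Vert SXS\right\Vert $ on the unit ball. If $F\equiv 0$, the resulting \emph{equality}, tested on rank-one operators, forces $S$ and $S^{\ast }$ to be injective and $R(S)$ to be closed, hence $S$ invertible, and one concludes by the invertible case. If $F>0$ somewhere, a uniform-approximation argument shows that for $n$ large the approximants $S_{n}\in \mathfrak{D}(H)$ satisfy $F_{n}\geq 0$ on a dense subset of the unit ball, hence everywhere; each such $S_{n}$, being left or right invertible, is then shown to be normal (this itself requires the nontrivial Steps 1--3 of the paper's Case 1, which prove that a left-invertible $S$ satisfying $(ii)$ has $\ker S^{\ast }=\left\{ 0\right\} $ via the $2\times 2$ decomposition over $H=R(S)\oplus \ker S^{\ast }$ and Lemma 6 with spectral inclusion), and normality passes to the uniform limit. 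None of this machinery is replaceable by the regularization of $\left\vert S\right\vert $ and $\left\vert S^{\ast }\right\vert $ you describe.
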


\begin{proof}
We may assume that $S\neq 0$

$(i)\Rightarrow (ii)$. Assume $(i)$ holds.

Let $X\in \mathfrak{B}(H)$.\ Then we have 
\begin{eqnarray*}
\left\Vert S^{2}X\right\Vert +\left\Vert XS^{2}\right\Vert &=&\left\Vert
S^{\ast }SX\right\Vert +\left\Vert XSS^{\ast }\right\Vert ,\ \text{(from }%
Remark\ 1\text{),} \\
&\geq &2\left\Vert SXS\right\Vert ,\text{ \ (from }(N-AGMI)\text{).}
\end{eqnarray*}

This proves $(ii)$.

$(ii)\Rightarrow (i)$. Assume $(ii)$ holds. \ 

We prove $(i)$ in three cases: $S$ injective with closed range, $S$
surjective, and $S$ arbitrary.

$Case\ 1$. Assume that $S$ is injective with closed range.

Hence, $S^{+}S=I$, $\ker P=\ker S=\left\{ 0\right\} $, and $R(P)=R(S^{\ast
}S)$ is closed (since $R(S^{\ast })$ is also closed). Thus $\ker P=\left\{
0\right\} $ and $R(P)=\left( \ker P\right) ^{\bot }=H.$ So, $P$ is
invertible.

All the $2\times 2$ matrices used in this proof are given with respect to
the orthogonal direct sum $H=R(S)\oplus \ker S^{\ast }.$ Then $S=\left[ 
\begin{array}{cc}
S_{_{1}} & S_{_{2}} \\ 
0 & 0%
\end{array}%
\right] \,$. We put $P=\left\vert S\right\vert ,\ Q=\left\vert S^{\ast
}\right\vert ,\ P_{_{1}}=\left\vert S_{_{1}}\right\vert ,\
P_{_{2}}=\left\vert S_{_{2}}\right\vert ,\ Q_{_{1}}=\left(
S_{_{1}}S_{_{1}}^{\ast }+S_{_{2}}S_{_{2}}^{\ast }\right) ^{\frac{1}{2}}.$ So
we have $S^{\ast }S=P^{2}=\left[ 
\begin{array}{cc}
P_{_{1}}^{2} & S_{_{1}}^{\ast }S_{_{2}} \\ 
S_{_{2}}^{\ast }S_{_{1}} & P_{_{2}}^{2}%
\end{array}%
\right] ,\ SS^{\ast }=Q^{2}=\left[ 
\begin{array}{cc}
Q_{_{1}}^{2} & 0 \\ 
0 & 0%
\end{array}%
\right] .$ It is clear that $Q_{1}$ is invertible and $Q^{+}=\left[ 
\begin{array}{cc}
Q_{_{1}}^{-1} & 0 \\ 
0 & 0%
\end{array}%
\right] .$

From $(ii),$ the two following inequalities hold%
\begin{equation}
\forall X\in \mathfrak{B}(H),\ \left\Vert S^{2}S^{+}XS^{+}\right\Vert
+\left\Vert S^{+}XS\right\Vert \geq 2\left\Vert SS^{+}X\right\Vert . 
\tag{$1$}
\end{equation}%
\begin{equation}
\forall X\in \mathfrak{B}(H),\ \left\Vert XS\right\Vert +\left\Vert
S^{2}XS^{+}\right\Vert \geq 2\left\Vert SX\right\Vert .  \tag{$2$}
\end{equation}

The proof is given in four steps.

Step 1. Prove that $\left( S^{2}\right) ^{+}S=S^{+}.$

It is known that $S^{+}$ is the unique solution of the following four
equations: $SXS=S,\ XSX=X,\ (XS)^{\ast }=XS,\ (SX)^{\ast }=SX.$ It is easy
to see that $\left( S^{2}\right) ^{+}S$ satisfies the first three equations.

Now we prove that $\left( S^{2}\right) ^{+}S$ also satisfies the last
equation. Since the operator $S\left( S^{2}\right) ^{+}S$ is a projection,
it suffices to prove that its norm is less than or equal to one. By taking $%
X=\left( S^{2}\right) ^{+}S$ in $(2)$, we obtain%
\begin{equation*}
2\geq \left\Vert \left( S^{2}\right) ^{+}S^{2}\right\Vert +\left\Vert
S^{2}\left( S^{2}\right) ^{+}SS^{+}\right\Vert \geq 2\left\Vert S\left(
S^{2}\right) ^{+}S\right\Vert .
\end{equation*}

Hence $\left\Vert S\left( S^{2}\right) ^{+}S\right\Vert \leq 1.$ Therefore $%
\left( S^{2}\right) ^{+}S=S^{+}.$

Step 2. Prove that $\left( S^{2}\right) ^{+}=(S^{+})^{2}$.

Since $S^{2}\left( S^{2}\right) ^{+}=SS^{+}S^{2}\left( S^{2}\right) ^{+}$,
then $S^{2}\left( S^{2}\right) ^{+}=S^{2}\left( S^{2}\right) ^{+}SS^{+}.$ So
from step 2, we obtain $S^{2}\left( S^{2}\right) ^{+}=S^{2}(S^{+})^{2}.$
Since $S^{2}$ is injective, we have $\left( S^{2}\right) ^{+}=(S^{+})^{2}.$

Step 3. Prove that $\ker S^{\ast }=\left\{ 0\right\} .$

Since $S$ is injective, then $\ker S^{\ast }=\left\{ 0\right\} $ if and only
if $S_{_{2}}=0.$ Assume that $S_{_{2}}\neq 0.$

Since $\left( S^{2}\right) ^{+}=(S^{+})^{2}$, then the two operators $%
S^{\ast }S$ and $SS^{+}$ commute (see $[2],\ [9]$ ). Thus $P^{2}=\left[ 
\begin{array}{cc}
P_{_{1}}^{2} & 0 \\ 
0 & P_{2}^{2}%
\end{array}%
\right] .$ So that $P=\left[ 
\begin{array}{cc}
P_{_{1}} & 0 \\ 
0 & P_{2}%
\end{array}%
\right] .$

Since $\ker S^{\ast }\neq \left\{ 0\right\} $, then $\sigma (Q^{2})=\sigma
(Q_{1}^{2})\cup \left\{ 0\right\} $. From the fact that $\sigma
(P^{2})=\sigma (Q^{2})-\left\{ 0\right\} $, we have $\sigma (P^{2})=\sigma
(Q_{1}^{2})$. Then $\sigma (P_{_{1}}^{2})\cup \sigma (P_{2}^{2})=\sigma
(Q_{1}^{2})$. Hence $\sigma (P_{_{1}}^{2})\subset \sigma (Q_{1}^{2})$. Thus $%
\sigma (P_{_{1}})\subset \sigma (Q_{_{1}})$.

Using the polar decomposition of $S$ and $S^{\ast }$ in the inequality $(1)$%
, we obtain the following inequality

\begin{equation*}
\forall X\in \mathfrak{B}(H),\ \left\Vert S^{2}S^{+}XP^{-1}\right\Vert
+\left\Vert Q^{+}XQ\right\Vert \geq 2\left\Vert SS^{+}X\right\Vert
\end{equation*}

By taking $X=\left[ 
\begin{array}{cc}
X_{_{1}} & 0 \\ 
0 & 0%
\end{array}%
\right] $ (resp. $X=\left[ 
\begin{array}{cc}
0 & X_{2} \\ 
0 & 0%
\end{array}%
\right] $ ), where $X_{_{1}}\in \mathfrak{B}(R(S))$ (resp. $X_{2}\in 
\mathfrak{B}(\ker S^{\ast },R(S)))$ in the last inequality and since $%
S^{2}S^{+}=\left[ 
\begin{array}{cc}
S_{_{1}} & 0 \\ 
0 & 0%
\end{array}%
\right] $, we deduce the two following inequalities%
\begin{equation}
\forall X_{_{1}}\in \mathfrak{B}(R(S)),\ \left\Vert
P_{1}X_{_{1}}P_{1}^{-1}\right\Vert +\left\Vert
Q_{_{1}}^{-1}X_{_{1}}Q_{_{1}}\right\Vert \geq 2\left\Vert X_{1}\right\Vert 
\tag{$3$}
\end{equation}

\begin{equation}
\forall X_{2}\in \mathfrak{B}(\ker S^{\ast },R(S)),\ \left\Vert
P_{1}X_{2}P_{2}^{-1}\right\Vert \geq 2\left\Vert X_{2}\right\Vert  \tag{$4$}
\end{equation}

By taking $X_{2}=x\otimes y$ (where $x\in (R(S))_{1}$, $y\in \ker S^{\ast }$%
) in $(4)$, we obtain 
\begin{equation*}
\forall x\in (R(S))_{1},\forall y\in \ker S^{\ast },\ \left\Vert
P_{1}x\right\Vert \left\Vert P_{2}^{-1}y\right\Vert \geq 2\left\Vert
y\right\Vert
\end{equation*}

So we have%
\begin{equation*}
\forall x\in (R(S))_{1},\forall y\in \left( \ker S^{\ast }\right) _{1},\
\left\Vert P_{1}x\right\Vert \geq 2\left\Vert P_{2}y\right\Vert
\end{equation*}

Thus $\left\Vert P_{2}y\right\Vert \leq \frac{k}{2}$, for every $y\in \left(
\ker S^{\ast }\right) _{1}$ (where $k=\underset{\left\Vert x\right\Vert =1}{%
\inf }\left\Vert P_{1}x\right\Vert >0$). Then $\left\langle
P_{2}^{2}y,y\right\rangle \leq \frac{k^{2}}{4}$, for every $y\in \left( \ker
S^{\ast }\right) _{1}$. So we obtain $\sigma (P_{2}^{2})\subset (0,\frac{%
k^{2}}{4}]$ and $\sigma (P_{1}^{2})\subset \lbrack k^{2},\infty ).$

Since $\sigma (P_{_{1}})\subset \sigma (Q_{_{1}})$, and $P_{_{1}}$ ,\ $%
Q_{_{1}}$ satisfy the inequality $\left( 3\right) $, then using $Lemma\ 5$,
we obtain $P_{1}=Q_{_{1}}$. Hence $\sigma (Q_{_{1}}^{2})=\sigma
(P_{1}^{2})=\sigma (P_{_{1}}^{2})\cup \sigma (P_{2}^{2})$. Then $\sigma
(P_{2}^{2})\subset $ $\sigma (P_{1}^{2})$, that is impossible since $(0,%
\frac{k^{2}}{4}]\cap \lbrack k^{2},\infty )=\varnothing $. Therefore $\ker
S^{\ast }=\left\{ 0\right\} $.

Step 4. Prove that $S$ is normal.

Since $\ker S^{\ast }=\left\{ 0\right\} $, then $R(S)=H$. So that $S$ is
invertible and satisfies the inequality $(ii)$. Hence $S$ satisfies the
following inequality%
\begin{equation*}
\forall X\in \mathfrak{B}(H),\ \left\Vert SXS^{-1}\right\Vert +\left\Vert
S^{-1}XS\right\Vert \geq 2\left\Vert X\right\Vert
\end{equation*}

Therefore $S$ is normal, by using $Proposition\ 2$.

$Case$ $2$. Assume $S$ surjective.

Then $S^{\ast }$ is injective with a closed range satisfying also the
inequality $(ii)$. So that from case 1, $S^{\ast }$ is normal. Hence $S$ is
normal.

$Case$ $3$. General situation.

We may assume without loss of generality that $\left\Vert S\right\Vert =1.$%
Then $\left\Vert S^{2}\right\Vert =\left\Vert S\right\Vert ^{2}=1.$ There
exists a sequence $\left( S_{n}\right) _{n\geq 1}$ of elements in $\mathfrak{%
D}(H)$ such that $S_{n}\rightarrow S$ uniformly.

Define the real function $F$ on the complete metric space $\left( \mathfrak{B%
}(H)\right) _{1}$ by 
\begin{equation*}
\forall X\in \left( \mathfrak{B}(H)\right) _{1},\ F(X)=\left\Vert
S^{2}X\right\Vert +\left\Vert XS^{2}\right\Vert -2\left\Vert SXS\right\Vert ,
\end{equation*}%
and for $n\geq 1,$ define the real function $F_{n}$ on $\left( \mathfrak{B}%
(H)\right) _{1}$ by 
\begin{equation*}
\forall X\in \left( \mathfrak{B}(H)\right) _{1},\ F_{n}(X)=\left\Vert
S_{n}^{2}X\right\Vert +\left\Vert XS_{n}^{2}\right\Vert -2\left\Vert
S_{n}XS_{n}\right\Vert .
\end{equation*}

Put $D=\left\{ X\in \left( \mathfrak{B}(H)\right) _{1}:F(X)>0\right\} .$
Then there are two cases, $D=\varnothing $, $D\neq \varnothing .$

$\mathbf{(1).}\ D=\varnothing .$ So, it follows that%
\begin{equation}
\forall X\in \mathfrak{B}(H),\left\Vert S^{2}X\right\Vert +\left\Vert
XS^{2}\right\Vert =2\left\Vert SXS\right\Vert .  \tag{$\ast $}
\end{equation}

From this equality, we have%
\begin{equation*}
\forall x,y\in H,\ \left\Vert S^{2}x\right\Vert \left\Vert y\right\Vert
+\left\Vert x\right\Vert \left\Vert S^{\ast 2}y\right\Vert =2\left\Vert
Sx\right\Vert \left\Vert S^{\ast }y\right\Vert .
\end{equation*}

Since $S^{2}\neq 0$, and from this last inequality, we deduce easily that $S$
and $S^{\ast }$ are injective, and then $S$ is with dense range.

Prove now that $S$ is with closed range. Let $(x_{_{n}})$ be a sequence of
vectors in $H$ such that $(Sx_{_{n}})$ converges to a vector $y\in H.$ We
may choose a vector $u\in (H)_{_{1}}$ such that $S^{\ast 2}u\neq 0.$ From
the above inequality, we obtain%
\begin{equation*}
\forall n,m\geq 1,\ \left\Vert S^{2}x_{_{n}}-S^{2}x_{_{m}}\right\Vert
+\left\Vert x_{_{n}}-x_{_{m}}\right\Vert \left\Vert S^{\ast 2}u\right\Vert
=2\left\Vert Sx_{_{n}}-Sx_{_{m}}\right\Vert \left\Vert S^{\ast }u\right\Vert
.
\end{equation*}

Hence, $(x_{_{n}})$ is a Cauchy sequence, and then it converges to some
vector $x\in H$. So that $Sx_{_{n}}\rightarrow y=Sx.$ This proves $R(S)$
closed. Then, $S$ is invertible.

So, from $(\ast )$, it follows that 
\begin{equation*}
\forall X\in \mathfrak{B}(H),\ \left\Vert SXS^{-1}\right\Vert +\left\Vert
S^{-1}XS\right\Vert =2\left\Vert X\right\Vert .
\end{equation*}

From $Proposition\ 2$, $(i)$ holds.

$\mathbf{(2).}$ $D\neq \varnothing .$ From the fact that $F$ is a positive
continuous map on $\left( \mathfrak{B}(H)\right) _{1}$, it follows that 
\begin{equation*}
\overline{D}=\overline{F^{-1}\left( \left( 0,\infty \right) \right) }%
=F^{-1}\left( [0,\infty )\right) =\left\{ X\in \left( \mathfrak{B}(H)\right)
_{1}:F(X)\geq 0\right\} =\left( \mathfrak{B}(H)\right) _{1}.
\end{equation*}

Let $X\in D$, and $\epsilon >0.$ Since $S_{n}\rightarrow S$ uniformly, then
there exists an integer $N\geq 1$ (depends only in $\epsilon $) such that 
\begin{equation*}
\forall n\geq N,\ \forall Y\in \left( \mathfrak{B}(H)\right) _{1},\
\left\vert F(Y)-F_{n}(Y)\right\vert \leq \epsilon .
\end{equation*}

If there exists $n\geq N$ such that $F_{n}(X)<0$, then using this last
inequality, we have $0\leq F(X)<\epsilon $, for every $\epsilon >0$; thus $%
F(X)=0$, leading a contradiction with $X\in D.$

From this fact, it follows that%
\begin{equation*}
\forall X\in D,\ \forall n\geq N,\ \ F_{n}(X)\geq 0.
\end{equation*}

Since each $F_{n}$ is a continuous map on $\left( \mathfrak{B}(H)\right)
_{1} $ and $L$ is dense in $\left( \mathfrak{B}(H)\right) _{1}$, then%
\begin{equation*}
\forall X\in \left( \mathfrak{B}(H)\right) _{1},\ \forall n\geq N,\
F_{n}(X)\geq 0.
\end{equation*}

So, it follows that%
\begin{equation*}
\forall X\in \mathfrak{B}(H),\ \forall n\geq N,\ \left\Vert
S_{n}^{2}X\right\Vert +\left\Vert XS_{n}^{2}\right\Vert \geq 2\left\Vert
S_{n}XS_{n}\right\Vert .
\end{equation*}%
Since for each $n\geq 1$, $S_{n}$ $\in \mathfrak{D}(H)$, then from the two
above cases, we obtain that $S_{n}$ is a normal operator, for every $n\geq N$%
. Since $S_{n}\rightarrow S$ uniformly and the class of all normal operators
in $\mathfrak{B}(H)$ is closed, then $S$ is a normal.
\end{proof}

\begin{remark}
In the above proof, the case 1 and the case 2 are given in the lemma
presented in the corrigendum $[19]$, and the general situation is presented
in $[20]$. Note that in the proof of this lemma of the corrigendum, we have
used the $Theorem$ $3.6$ of $[12]$ which is given with the condition of
equality between spectrum (that is not suffice), but the lemma is true with
the condition of inclusion instead of equality. So, we have mentioned in the
proof of the lemma that this theorem remains true with inclusion between
spectrum, but without argument. In this survey, we have present this
argument (see $Lemma$ $4$ and $Lemma$ $5$).
\end{remark}

\begin{corollary}
Let $S\in \mathfrak{B}(H)$. Then, the following properties are equivalent.

$(i)$ $S$ is normal,

$(ii)$ $\forall X\in \mathfrak{B}(H),\ \left\Vert S^{2}X\right\Vert
\left\Vert XS^{2}\right\Vert \geq \left\Vert SXS\right\Vert ^{2}.$
\end{corollary}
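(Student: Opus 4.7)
The plan is to derive this corollary directly from the preceding Proposition~5 (the characterization via $\|S^{2}X\|+\|XS^{2}\|\ge 2\|SXS\|$), combined with the numerical arithmetic--geometric mean inequality $a+b\ge 2\sqrt{ab}$ for non-negative reals $a,b$. The two directions are quite short and symmetric.

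For $(ii)\Rightarrow(i)$, I would apply AM--GM to the numerical quantities $\|S^{2}X\|$ and $\|XS^{2}\|$: assuming $\|S^{2}X\|\,\|XS^{2}\|\ge\|SXS\|^{2}$, one immediately gets
\begin{equation*}
\|S^{2}X\|+\|XS^{2}\|\ \ge\ 2\sqrt{\|S^{2}X\|\,\|XS^{2}\|}\ \ge\ 2\|SXS\|,\quad X\in\mathfrak{B}(H),
\end{equation*}
which is exactly condition $(ii)$ of Proposition~5, so $S$ is normal.

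For $(i)\Rightarrow(ii)$, I would reuse the chain of estimates already carried out in the proof of Proposition~1, observing that the intermediate step yields a multiplicative inequality. Indeed, for arbitrary $A,B,X\in\mathfrak{B}(H)$,
\begin{equation*}
\|A^{\ast}AX\|\,\|XBB^{\ast}\|\ \ge\ \|BB^{\ast}X^{\ast}A^{\ast}AX\|\ \ge\ r(BB^{\ast}X^{\ast}A^{\ast}AX)=r(B^{\ast}X^{\ast}A^{\ast}AXB)=\|AXB\|^{2}.
\end{equation*}
Specializing to $A=B=S$ gives $\|S^{\ast}SX\|\,\|XSS^{\ast}\|\ge\|SXS\|^{2}$. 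When $S$ is normal, Remark~1 yields $\|S^{\ast}SX\|=\|S\cdot SX\|=\|S^{2}X\|$ and $\|XSS^{\ast}\|=\|(XS)S^{\ast}\|=\|(XS)S\|=\|XS^{2}\|$, which finishes $(ii)$.

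The main thing to watch out for is the normality-dependent step in $(i)\Rightarrow(ii)$: the multiplicative bound $\|S^{\ast}SX\|\,\|XSS^{\ast}\|\ge\|SXS\|^{2}$ is valid for every $S$, but rewriting the left-hand side as $\|S^{2}X\|\,\|XS^{2}\|$ genuinely requires Remark~1 and therefore the normality of $S$. Apart from this identification, everything is a direct consequence of results already established in the excerpt, so I do not anticipate any serious obstacle.
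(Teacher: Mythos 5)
Your proposal is correct and matches the paper's own proof essentially line for line: the direction $(ii)\Rightarrow(i)$ uses the numerical AM--GM inequality to reduce to the additive characterization $\left\Vert S^{2}X\right\Vert +\left\Vert XS^{2}\right\Vert \geq 2\left\Vert SXS\right\Vert$, and the direction $(i)\Rightarrow(i i)$ extracts the multiplicative estimate $\left\Vert S^{\ast }SX\right\Vert \left\Vert XSS^{\ast }\right\Vert \geq \left\Vert SXS\right\Vert ^{2}$ from the spectral-radius chain in the proof of $(N-AGMI)$ and then invokes Remark 1. No gaps.
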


\begin{proof}
$(i)\Rightarrow (ii)$. Assume $(i)$ holds, and let $X\in \mathfrak{B}(H)$.\
Then we have%
\begin{eqnarray*}
\left\Vert S^{2}X\right\Vert \left\Vert XS^{2}\right\Vert &=&\left\Vert
S^{\ast }SX\right\Vert \left\Vert XSS^{\ast }\right\Vert \text{,\ \ (using }%
Remark\text{ }1\text{)} \\
&\geq &\left\Vert SXS\right\Vert ^{2}\text{, (see the proof of the }(N-AGMI)%
\text{ in }Proposition\text{ }1\text{).}
\end{eqnarray*}

$(ii)\Rightarrow (i)$. Assume $(ii)$ holds, and let $X\in \mathfrak{B}(H)$.\
Then we have:%
\begin{eqnarray*}
\frac{\left\Vert S^{2}X\right\Vert +\left\Vert XS^{2}\right\Vert }{2} &\geq &%
\sqrt{\left\Vert S^{2}X\right\Vert \left\Vert XS^{2}\right\Vert }\text{, \ \
(from the numerical AGMI),} \\
&\geq &\left\Vert SXS\right\Vert \text{, \ (using }(ii)\text{).}
\end{eqnarray*}

From the last proposition, $(i)$ holds.
\end{proof}

\begin{corollary}
$[18]\ $Let $S$ be an operator with closed range in $\mathfrak{B}(H)$. Then
the following properties are equivalent:%
\begin{equation*}
\begin{array}{ccccc}
(i) & S\text{ is normal,} &  &  &  \\ 
(ii) & \forall X\in \mathfrak{B}(H), & \left\Vert SXS^{+}\right\Vert
+\left\Vert S^{+}XS\right\Vert & = & \left\Vert S^{\ast }XS^{+}\right\Vert
+\left\Vert S^{+}XS^{\ast }\right\Vert , \\ 
(iii) & \forall X\in \mathfrak{B}(H), & \left\Vert SXS^{+}\right\Vert
+\left\Vert S^{+}XS\right\Vert & \geq & \left\Vert S^{\ast
}XS^{+}\right\Vert +\left\Vert S^{+}XS^{\ast }\right\Vert , \\ 
(iv) & \forall X\in \mathfrak{B}(H), & \left\Vert SXS^{+}\right\Vert
+\left\Vert S^{+}XS\right\Vert & \geq & 2\left\Vert SS^{+}XS^{+}S\right\Vert
,%
\end{array}%
\end{equation*}
\end{corollary}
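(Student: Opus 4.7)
The plan is to establish the cycle $(i)\Rightarrow (ii)\Rightarrow (iii)\Rightarrow (iv)\Rightarrow (i)$. The first three steps are formal; the only substantive implication is the last one.

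For $(i)\Rightarrow (ii)$, assume $S$ is normal. By Remark~$1$, $\|S^{\ast}Y\|=\|SY\|$ and $\|YS^{\ast}\|=\|YS\|$ for every $Y\in \mathfrak{B}(H)$. Specializing the first to $Y=XS^{+}$ and the second to $Y=S^{+}X$ produces $\|S^{\ast}XS^{+}\|=\|SXS^{+}\|$ and $\|S^{+}XS^{\ast}\|=\|S^{+}XS\|$, whose sum is $(ii)$. The step $(ii)\Rightarrow (iii)$ is trivial, and $(iii)\Rightarrow (iv)$ follows immediately by chaining $(iii)$ with the inequality $(5^{\prime})$ of Corollary~$1$, which holds for every closed-range operator and asserts $\|S^{\ast}XS^{+}\|+\|S^{+}XS^{\ast}\|\geq 2\|SS^{+}XS^{+}S\|$.

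The main obstacle is $(iv)\Rightarrow (i)$. My plan is to substitute $Y=SXS$ (for arbitrary $X\in \mathfrak{B}(H)$) into the inequality of $(iv)$ and exploit the defining Moore-Penrose identity $SS^{+}S=S$. Under this substitution the right-hand side collapses to $2\|SS^{+}(SXS)S^{+}S\|=2\|SXS\|$, while the left-hand side becomes
\[
\|S^{2}X(SS^{+})\|+\|(S^{+}S)XS^{2}\|.
\]
Since $SS^{+}$ and $S^{+}S$ are orthogonal projections, each of norm at most $1$, the two summands are bounded above by $\|S^{2}X\|$ and $\|XS^{2}\|$ respectively. Consequently
\[
\|S^{2}X\|+\|XS^{2}\|\;\geq\; 2\|SXS\|,\qquad \forall X\in \mathfrak{B}(H),
\]
which is exactly the operator inequality characterizing normality in the preceding Proposition~$[19,20]$. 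Hence $S$ is normal, closing the cycle. The implication $(iv)\Rightarrow (i)$ is thus reduced to a short manipulation of $SS^{+}S=S$ combined with the already-proved general characterization, so no additional spectral argument is needed here.
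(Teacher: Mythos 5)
Your proposal is correct and follows essentially the same route as the paper: $(i)\Rightarrow(ii)$ via Remark~1, $(iii)\Rightarrow(iv)$ by chaining with inequality $(5^{\prime})$ of Corollary~1, and $(iv)\Rightarrow(i)$ by substituting $SXS$ for $X$, using $SS^{+}S=S$ and $\left\Vert SS^{+}\right\Vert =\left\Vert S^{+}S\right\Vert =1$ to reduce to the inequality $\left\Vert S^{2}X\right\Vert +\left\Vert XS^{2}\right\Vert \geq 2\left\Vert SXS\right\Vert$ of the preceding proposition. No gaps.
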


\begin{proof}
We may assume that $S\neq 0.$

$(i)\Rightarrow (ii).$ This follows immediately from $Remark\ 1$.

The implication $(ii)\Rightarrow (iii)$ is trivial.

$(iii)\Rightarrow (vi).$ This follows immediately from $Corollary\
1.5^{\prime }$.

$(iv)\Rightarrow (i)$. Assume $(iv)$ holds. Then the following inequality
holds%
\begin{equation*}
\forall X\in \mathfrak{B}(H),\ \left\Vert S^{2}XSS^{+}\right\Vert
+\left\Vert S^{+}SXS^{2}\right\Vert \geq 2\left\Vert
SS^{+}SXSS^{+}S\right\Vert .
\end{equation*}

From this inequality and since $\left\Vert SS^{+}\right\Vert =$ $\left\Vert
S^{+}S\right\Vert =1,\ $and $SS^{+}S=S$, it follows that%
\begin{equation*}
\forall X\in \mathfrak{B}(H),\ \left\Vert S^{2}X\right\Vert +\left\Vert
XS^{2}\right\Vert \geq 2\left\Vert SXS\right\Vert .
\end{equation*}

Using $Proposition\ 4$, $S$ is normal.
\end{proof}

\begin{remark}
In the original paper $[18]$, the above corollary is presented before the
characterization of \ the class of all normal operators in $\mathfrak{B}(H)$
in its general situation, for this reason its proof was very strong. But, in
this survey, we have adopt a new strategy, where this corollary follows
immediately from the general situation.
\end{remark}

In $[1,\ 1972]\,$, Ando proved that for $S\in \mathfrak{B}(H)$, $S$ is
normal if and only if $S$ and $S^{\ast }$ are paranormal, and $\ker S=\ker
S^{\ast }$. In the next proposition, we present some new general
characterizations of the class of all normal operators in $\mathfrak{B}(H)$,
and we shall show that the Ando result remains true without the kernel
assumption.

\begin{proposition}
Let $S\in \mathfrak{B}(H)$. The following properties are equivalent.

$(i)$ $S$ is normal,

$(ii)$ $\left\vert S^{2}\right\vert =\left\vert S\right\vert ^{2},\
\left\vert S^{\ast 2}\right\vert =\left\vert S^{\ast }\right\vert ^{2},$

$(iii)\ \left\vert S^{2}\right\vert ^{2}\geq \left\vert S\right\vert ^{4},\
\left\vert S^{\ast 2}\right\vert ^{2}\geq \left\vert S^{\ast }\right\vert
^{4},$

$(iv)\ S$ and $S^{\ast }$ belong to class $\mathbf{A}$,

$(v)\ S$ and $S^{\ast }$ are paranormal.
\end{proposition}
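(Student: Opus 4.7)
The plan is to prove the cycle $(i) \Rightarrow (ii) \Rightarrow (iii) \Rightarrow (iv) \Rightarrow (v) \Rightarrow (i)$; only the final implication requires real work.

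The first four links are short. For $(i) \Rightarrow (ii)$, the normality $S^*S = SS^*$ gives $|S^2|^2 = S^{*2}S^2 = S^*(SS^*)S = (S^*S)^2 = |S|^4$, so uniqueness of the positive square root yields $|S^2| = |S|^2$, and the same computation applied to $S^*$ (which is also normal) gives $|S^{*2}| = |S^*|^2$. $(ii) \Rightarrow (iii)$ is trivial. For $(iii) \Rightarrow (iv)$, the Löwner--Heinz inequality with $\alpha = \tfrac{1}{2}$ applied to $|S^2|^2 \geq (|S|^2)^2$ produces $|S^2| \geq |S|^2$, and the same argument for $S^*$ places both in class $\mathbf{A}$. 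Finally $(iv) \Rightarrow (v)$ is the containment of class $\mathbf{A}$ in the paranormal class recorded in the preliminaries.

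The crux is $(v) \Rightarrow (i)$, which is precisely what strengthens Ando's theorem by dropping the hypothesis $\ker S = \ker S^*$. My strategy is to feed $S$ into the criterion of Proposition 4, namely that $S$ is normal iff $\|S^2 X\| + \|X S^2\| \geq 2 \|SXS\|$ for every $X \in \mathfrak{B}(H)$. For a rank-one test operator $X = u \otimes v$ one has $\|S^2 X\| = \|S^2 u\|\|v\|$, $\|X S^2\| = \|u\|\|S^{*2}v\|$, and $\|SXS\| = \|Su\|\|S^*v\|$. Paranormality of $S$ and of $S^*$ supplies $\|Su\|^2 \leq \|u\|\|S^2 u\|$ and $\|S^*v\|^2 \leq \|v\|\|S^{*2}v\|$; multiplying these and then applying the numerical AM--GM inequality yields
\[
\|Su\|\|S^*v\| \leq \sqrt{(\|S^2 u\|\|v\|)(\|u\|\|S^{*2}v\|)} \leq \tfrac{1}{2}\bigl(\|S^2 u\|\|v\| + \|u\|\|S^{*2}v\|\bigr),
\]
so the inequality of Proposition 4 holds for every $X \in \mathcal{F}_1(H)$.

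The main obstacle is to promote this rank-one inequality to all of $\mathfrak{B}(H)$, as Proposition 4 demands. My proposed remedy is to mirror the three-case scheme used in the proof of Proposition 4: reduce first to the case of $S$ injective with closed range by working in the block decomposition $H = R(S) \oplus \ker S^*$ with $S = \bigl[\begin{smallmatrix} S_1 & S_2 \\ 0 & 0 \end{smallmatrix}\bigr]$ and using the rank-one inequality on off-diagonal block vectors (in the spirit of Step~3, Case~1 of Proposition~4) to force $S_2 = 0$, equivalently $\ker S = \ker S^*$; treat the surjective case by passing to $S^*$; and handle the general situation through the density of $\mathfrak{D}(H)$ in $\mathfrak{B}(H)$, exactly as in Case~3 of Proposition~4. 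Once $\ker S = \ker S^*$ is established, Ando's original theorem applies directly to conclude that $S$ is normal.
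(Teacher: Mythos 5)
Your chain $(i)\Rightarrow(ii)\Rightarrow(iii)\Rightarrow(iv)\Rightarrow(v)$ and your derivation of the rank-one inequality $\left\Vert S^{2}X\right\Vert +\left\Vert XS^{2}\right\Vert \geq 2\left\Vert SXS\right\Vert$ for $X\in \mathcal{F}_{1}(H)$ from the paranormality of $S$ and $S^{\ast }$ are correct and coincide with the paper's steps. The gap is in everything after that. You first announce that you will promote the rank-one inequality to all of $\mathfrak{B}(H)$ so as to invoke Proposition 4, but you give no mechanism for this promotion, and none is apparent: the quantity $\left\Vert S^{2}X\right\Vert +\left\Vert XS^{2}\right\Vert -2\left\Vert SXS\right\Vert$ does not behave additively under rank-one decompositions of $X$. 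The substitute you propose --- running Step 3, Case 1 of the proof of Proposition 4 with only rank-one test operators to force $S_{2}=0$ --- cannot be executed either: that step depends on Steps 1 and 2 of that proof, which plug the non-rank-one operator $X=\left( S^{2}\right) ^{+}S$ into the full inequality to obtain $\left( S^{2}\right) ^{+}=(S^{+})^{2}$ and hence the block-diagonality of $\left\vert S\right\vert$, and on Lemma 5 applied to the diagonal-block inequality over all of $\mathfrak{B}(R(S))$, whose proof (through Lemmas 2--4) tests the inequality on arbitrary selfadjoint operators commuting with $P$. None of this survives restriction to $\mathcal{F}_{1}(H)$. Finally, in the general case the density of $\mathfrak{D}(H)$ transfers only the rank-one inequality to the approximants $S_{n}$, which need not be paranormal, so Ando's theorem has nothing to act on in that branch; your plan ``establish $\ker S=\ker S^{\ast }$, then apply Ando'' collapses there.

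The paper's route sidesteps all of this, and you should adopt it. For $S$ surjective, paranormality of $S$ applied to the vector $S^{+}x$ gives $\left\Vert S^{+}x\right\Vert \left\Vert Sx\right\Vert \geq \left\Vert x\right\Vert ^{2}$ (since $S^{2}S^{+}x=Sx$), so $S$ is injective, hence invertible; for $S$ injective with closed range, the same two lines applied to the surjective paranormal operator $S^{\ast }$ again yield invertibility. Once $S$ is invertible, your rank-one inequality is exactly condition $(iv)$ of Proposition 3 --- the rank-one characterization of invertible normal operators, which exists precisely so that one never has to pass from $\mathcal{F}_{1}(H)$ back to $\mathfrak{B}(H)$ --- and normality follows. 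The general case is then handled by the density argument of Case 3 of Proposition 4 run over $\mathcal{F}_{1}(H)$ together with $[10,\ Theorem\ 2.1]$, concluding that the approximants are normal and hence that $S$ is normal as a uniform limit; Ando's original theorem is never invoked.
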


\begin{proof}
The two implications $(i)\Rightarrow (ii),\ (ii)\Rightarrow (iii)$ are
trivial.

$(iii)\Rightarrow (iv)$.\ This follows from L\u{o}wner-Heinz inequality with 
$\alpha =\frac{1}{2}$.

$(vi)\Rightarrow (v)$. This follows from $[8]$.

$(v)\Rightarrow (i).$ Assume $(v)$ holds. Then, we have%
\begin{equation*}
\left\{ 
\begin{array}{c}
(1)\ \ \ \forall x\in H,\ \left\Vert x\right\Vert \left\Vert
S^{2}s\right\Vert \geq \left\Vert Sx\right\Vert ^{2}\ , \\ 
(2)\ \ \ \forall x\in H,\ \left\Vert x\right\Vert \left\Vert \left( S^{\ast
}\right) ^{2}x\right\Vert \geq \left\Vert S^{\ast }x\right\Vert ^{2}\ .%
\end{array}%
\right.
\end{equation*}

So, it follows that%
\begin{equation*}
(3)\ \ \ \forall X\in \mathcal{F}_{_{1}}(H),\ \left\Vert S^{2}X\right\Vert
+\left\Vert XS^{2}\right\Vert \geq 2\left\Vert SXS\right\Vert \ .
\end{equation*}

The proof is given in three cases.

Case 1.\ Assume that $S$ surjective.

From $(1)$, it follows that 
\begin{equation*}
\forall x\in H,\ \left\Vert S^{+}x\right\Vert \left\Vert Sx\right\Vert \geq
\left\Vert x\right\Vert ^{2}.
\end{equation*}

Then $S$ is injective. Hence $S$ is invertible. So using $(3)$, we obtain%
\begin{equation*}
\forall X\in \mathcal{F}_{_{1}}(H),\ \left\Vert SXS^{-1}\right\Vert
+\left\Vert S^{-1}XS\right\Vert \geq 2\left\Vert X\right\Vert .
\end{equation*}

So from $Proposition\ 3$, $S$ is normal.

Case 2. Assume that $S$ is injective with closed range.

Then $S^{\ast }$ is surjective. So, from $(2)$ and using the same argument
as used in the case 1, we find that $S$ invertible. So that $S$ is normal.

Case 3. General situation.

From $(3)$, and by using the same argument as used in the case 3 of the
proof of $Proposition$ $4$ (where $\mathcal{F}_{_{1}}(H)$ takes the place of 
$\mathfrak{B}(H)$), and using $[10,\ Theorem\ 2.1],$ we obtain that $S$ is
normal.
\end{proof}

\begin{remark}
The equivalences between $(i),\ (ii),\ $and $(iii)$ in the above corollary
was given in $[20],$ and follow from $Proposition\ 4$. The equivalences
between $(i),\ (iv)$, and $(v)$ are new.
\end{remark}

\section{Arithmetic-Geometric-Mean Inequality, Selfadjoint Operators, and
Characterization}

In the following proposition, we shall give a family of operator
inequalities that are equivalent to \bigskip $(S-AGMI)$ and presenting the
proof of $\left( S1\right) $ given in $[5].$

\begin{proposition}
$[3,\ 5]\ $The following operator inequalities hold and are mutually
equivalent:%
\begin{equation}
\forall X\in \mathfrak{B}(H),\ \left\Vert A^{\ast }AX+XBB^{\ast }\right\Vert
\geq 2\left\Vert AXB\right\Vert ,  \tag{$1$}
\end{equation}%
for every $A,B\in \mathfrak{B}(H),$%
\begin{equation}
\forall X\in \mathfrak{B}(H),\ \left\Vert SXR^{+}+S^{+}XR\right\Vert \geq
2\left\Vert SS^{+}XR^{+}R\right\Vert ,  \tag{$2$}
\end{equation}%
for every selfadjoint operators with closed ranges $S,R\in \mathfrak{B}(H),$%
\begin{equation}
\forall X\in \mathfrak{B}(H),\ \left\Vert SXR^{-1}+S^{-1}XR\right\Vert \geq
2\left\Vert X\right\Vert ,  \tag{$3$}
\end{equation}%
for every invertible selfadjoint operators $S,R\in \mathfrak{B}(H),$%
\begin{equation}
\forall X\in \mathfrak{B}(H),\ \left\Vert S^{2}X+XR^{2}\right\Vert \geq
2\left\Vert SXR\right\Vert ,  \tag{$4$}
\end{equation}%
for every selfadjoint operators $S,R\in \mathfrak{B}(H),$ 
\begin{equation}
X\in \mathfrak{B}(H),\ \left\Vert A^{\ast }AX+XAA^{\ast }\right\Vert \geq
2\left\Vert AXA\right\Vert ,  \tag{$1^{\prime }$}
\end{equation}%
for every $A\in \mathfrak{B}(H),$ 
\begin{equation}
\forall X\in \mathfrak{B}(H),\ \left\Vert SXS^{+}+S^{+}XS\right\Vert \geq
2\left\Vert SS^{+}XS^{+}S\right\Vert ,  \tag{$2^{\prime }$}
\end{equation}%
for every selfadjoint operator with closed range $S\in \mathfrak{B}(H),$%
\begin{equation}
\forall X\in \mathfrak{B}(H),\ \left\Vert SXS^{-1}+S^{-1}XS\right\Vert \geq
2\left\Vert X\right\Vert ,  \tag{$3^{\prime }$}
\end{equation}%
for every invertible selfadjoint operator $S\in \mathfrak{B}(H),$%
\begin{equation}
\forall X\in \mathfrak{B}(H),\ \left\Vert S^{2}X+XS^{2}\right\Vert \geq
2\left\Vert SXS\right\Vert ,  \tag{$4^{\prime }$}
\end{equation}%
for every selfadjoint operator $S\in \mathfrak{B}(H).$
\end{proposition}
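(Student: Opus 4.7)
The plan is to mirror Proposition 1's proof almost verbatim, with each normal operator replaced by a selfadjoint one and each sum of norms $\|\cdot\|+\|\cdot\|$ replaced by a norm of a sum $\|\cdot+\cdot\|$. The cyclic chain $(1)\Rightarrow(2)\Rightarrow(3)\Rightarrow(4)\Rightarrow(1)$ transfers essentially verbatim, as does the passage between the paired-operator inequalities $(1)$--$(4)$ and their single-operator counterparts $(1^{\prime})$--$(4^{\prime})$.

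For $(1)\Rightarrow(2)$, given selfadjoint $S$, $R$ with closed ranges, use $S=S^{\ast}SS^{+}$ and $R=R^{+}RR^{\ast}$ (together with $S^{\ast}=S$, $R^{\ast}=R$) and substitute $X\mapsto S^{+}XR^{+}$ into $(1)$. Step $(2)\Rightarrow(3)$ is the invertible specialization $S^{+}=S^{-1}$. For $(3)\Rightarrow(4)$, set $P=\left\vert S\right\vert $ and $Q=\left\vert R\right\vert $; then $P+\epsilon I$ and $Q+\epsilon I$ are invertible selfadjoint for every $\epsilon>0$, so applying $(3)$ with $X$ replaced by $(P+\epsilon I)Y(Q+\epsilon I)$ gives $\left\Vert (P+\epsilon I)^{2}Y+Y(Q+\epsilon I)^{2}\right\Vert \geq 2\left\Vert (P+\epsilon I)Y(Q+\epsilon I)\right\Vert $; letting $\epsilon\to 0^{+}$ and using $P^{2}=S^{2}$, $Q^{2}=R^{2}$, together with $\left\Vert PYQ\right\Vert =\left\Vert SYR\right\Vert $ (a consequence of $r(RAR)=r(\left\vert R\right\vert A\left\vert R\right\vert )$ for positive $A$, applied to $A=Y^{\ast}S^{2}Y$) yields $(4)$. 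For $(4)\Rightarrow(1)$, put $P=\left\vert A\right\vert $, $Q=\left\vert B^{\ast }\right\vert $, so that $\left\Vert A^{\ast }AX+XBB^{\ast }\right\Vert =\left\Vert P^{2}X+XQ^{2}\right\Vert \geq 2\left\Vert PXQ\right\Vert =2\left\Vert AXB\right\Vert $ by the same identity.

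The equivalences among $(1^{\prime})$--$(4^{\prime})$ follow by the identical chain applied to a single selfadjoint operator, and the bridge $(1)\Leftrightarrow(1^{\prime})$ is handled as in Proposition 1: the forward direction is trivial ($B=A$), and the reverse uses the Berberian lift to $H\oplus H$ with $C=\operatorname{diag}(A,B)$ and $Y$ the operator having $X$ as its $(1,2)$-block and zeros elsewhere.

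The final and substantive task is to prove that at least one of these inequalities actually holds. Following Fujii et al.\ $[5]$, I would target $(3^{\prime })$ directly: for every invertible selfadjoint $S\in \mathfrak{B}(H)$ and every $X\in \mathfrak{B}(H)$,
$$
\left\Vert SXS^{-1}+S^{-1}XS\right\Vert \geq 2\left\Vert X\right\Vert .
$$
A short spectral-calculus argument yields this Corach-Porta-Recht inequality, and every remaining inequality then follows from the equivalences. This is the main obstacle: the direct proof of $(N-AGMI)$ in Proposition 1 exploited that its left-hand side is a \emph{sum of two norms}, permitting a term-by-term estimate via the numerical AM-GM inequality and the spectral-radius inequality. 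That route is unavailable here, because the $(S-AGMI)$ sum lives inside a \emph{single} norm; so a genuinely different, Corach-Porta-Recht-style estimate is required to produce the first inequality.
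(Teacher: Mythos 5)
Your equivalence chain matches the paper's proof of this proposition essentially step for step: the substitution $X\mapsto S^{+}XR^{+}$ for $(1)\Rightarrow(2)$, the $\epsilon$-perturbation of $P=\left\vert S\right\vert$, $Q=\left\vert R\right\vert$ for $(3)\Rightarrow(4)$, the polar-decomposition identity $\left\Vert PXQ\right\Vert =\left\Vert SXR\right\Vert$ for $(4)\Rightarrow(1)$, and the Berberian lift for $(1^{\prime })\Rightarrow(1)$ are all exactly what the paper does (your spectral-radius justification of $\left\Vert PYQ\right\Vert =\left\Vert SYR\right\Vert$ is in fact more explicit than the paper's one-line remark). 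You have also correctly diagnosed that the only substantive content is establishing that one of the inequalities actually holds, and that the term-by-term numerical AM--GM estimate from Proposition 1 cannot work here because the sum sits inside a single norm.

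But that is precisely the step you do not carry out: ``a short spectral-calculus argument yields this Corach--Porta--Recht inequality'' is a citation, not a proof, and without it the proposition only asserts that eight unproved statements are equivalent. For the record, the paper's argument (following $[5]$) is short. Step 1: if $X$ is selfadjoint and $S$ is selfadjoint invertible, choose $\lambda \in \sigma (X)$ with $\left\vert \lambda \right\vert =\left\Vert X\right\Vert$; since $\sigma (SXS^{-1})=\sigma (X)\subset V(SXS^{-1})$, there is a state $f$ with $f(SXS^{-1})=\lambda$, and because $(SXS^{-1})^{\ast }=S^{-1}XS$ and $\lambda$ is real, also $f(S^{-1}XS)=\overline{\lambda }=\lambda$; hence $2\left\Vert X\right\Vert =\left\vert f(SXS^{-1}+S^{-1}XS)\right\vert \leq \left\Vert SXS^{-1}+S^{-1}XS\right\Vert$. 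Step 2: for arbitrary $X$, apply Step 1 on $H\oplus H$ to $M=S\oplus S$ and the selfadjoint dilation $Y$ having $X$ and $X^{\ast }$ as its off-diagonal blocks, and use $\left\Vert Y\right\Vert =\left\Vert X\right\Vert$. Supplying this (or an equivalent direct derivation of $(S-AGMI)$ itself, as in McIntosh $[11]$) is needed to close the proof.
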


\begin{proof}
$(1)\Rightarrow (2).$ Assume $(1)$ holds. Let $S,R\in \mathbb{S}_{cr}(H),\
X\in \mathfrak{B}(H)$. Since $S=S^{\ast }SS^{+}$ and $R=R^{+}RR^{\ast }$,
then from $(1)$ it follows that 
\begin{eqnarray*}
\left\Vert SXR^{+}+S^{+}XR\right\Vert &=&\left\Vert S^{\ast }S\left(
S^{+}XR^{+}\right) +\left( S^{+}XR^{+}\right) RR^{\ast }\right\Vert \\
&\geq &2\left\Vert SS^{+}XR^{+}R\right\Vert .
\end{eqnarray*}

Hence $(2)$ holds.

$(2)\Rightarrow (3).$ Trivial.

$(3)\Rightarrow (4).$ Assume $(3)$ holds. Let $S,R\in \mathbb{S}(H)$, and
put $P=\left\vert S\right\vert $, $Q=\left\vert R\right\vert $.

Let $\epsilon >0$. From (4), the following inequality holds%
\begin{equation*}
\forall X\in \mathfrak{B}(H),\ \left\Vert \left( P+\epsilon I\right)
^{2}X+X\left( Q+\epsilon I\right) ^{2}\right\Vert \geq 2\left\Vert \left(
P+\epsilon I\right) X\left( Q+\epsilon I\right) \right\Vert .
\end{equation*}

Letting $\epsilon \rightarrow \infty $, we obtain%
\begin{equation*}
\forall X\in \mathfrak{B}(H),\ \left\Vert S^{2}X+XR^{2}\right\Vert \geq
2\left\Vert SXR\right\Vert .
\end{equation*}

This proves $(4)$.

$(4)\Rightarrow (1).$ This follows immediately by using the polar
decomposition of an operator.

Hence, the equivalences $(1)-(4)$ hold.

From pair of operators to single operator, the equivalences $(1^{\prime
})-(5^{\prime })$ hold.

$(1)\Rightarrow (1^{\prime }).$ Trivial.

$(1^{\prime })\Rightarrow (1)$. This follows using Berberian technic as used
in $Proposition\ 1$.

Hence, the eight properties are mutually equivalent.

Prove now that the operator inequality $(3^{\prime })$ holds.

Step1.\ Let $S,\ X\in \mathfrak{B}(H)$ such that $S$ and $X$ are
selfadjoint, and $S$ invertible.

Then, there exists $\lambda \in \sigma (X)$ such that $\left\vert \lambda
\right\vert =\left\Vert X\right\Vert .$ Since, $\sigma (X)=\sigma
(SXS^{-1})\subset V(SXS^{-1})$, there exists a state $f$ on $\mathfrak{B}(H)$
such that $\lambda =f\left( SXS^{-1}\right) =f\left( S^{-1}XS\right) .$ This
gives us, $2\left\Vert X\right\Vert =\left\vert f\left(
SXS^{-1}+S^{-1}XS\right) \right\vert \leq \left\Vert
SXS^{-1}+S^{-1}XS\right\Vert .$

Step 2. Let $S,\ X\in \mathfrak{B}(H)$ such that $S$ is selfadjoint
invertible.

Let the two following operators on the Hilbert space $H\oplus H$ given by $M=%
\left[ 
\begin{array}{cc}
S & 0 \\ 
0 & S%
\end{array}%
\right] $ and $Y=\left[ 
\begin{array}{cc}
0 & X \\ 
X^{\ast } & 0%
\end{array}%
\right] .$ So that $M$ and $Y$ are selfadjoint operators in $\mathfrak{B}%
(H\oplus H)$ and where $M$ is invertible. Applying step 1 for this pair of
operators, so we obtain 
\begin{eqnarray*}
\left\Vert SXS^{-1}+S^{-1}XS\right\Vert &=&\left\Vert
MXM^{-1}+M^{-1}XM\right\Vert , \\
&\geq &2\left\Vert Y\right\Vert , \\
&=&2\left\Vert X\right\Vert .
\end{eqnarray*}
\end{proof}

\begin{corollary}
The following operator inequalities hold and are equivalent to $(S-AGMI):$

\begin{equation}
\forall X\in \mathfrak{B}(H),\ \left\Vert S^{\ast }XR^{+}+S^{+}XR^{\ast
}\right\Vert \geq 2\left\Vert SS^{+}XR^{+}R\right\Vert ,  \tag{$5$}
\end{equation}%
for every operators with closed ranges $S,\ R\in \mathfrak{B}(H),$

\begin{equation}
\forall X\in \mathfrak{B}(H),\ \left\Vert S^{\ast }XR^{-1}+S^{-1}XR^{\ast
}\right\Vert \geq 2\left\Vert X\right\Vert ,  \tag{$6$}
\end{equation}%
for every invertible operators $S,\ R\in \mathfrak{B}(H),$

\begin{equation}
\forall X\in \mathfrak{B}(H),\ \left\Vert S^{\ast }XS^{+}+S^{+}XS^{\ast
}\right\Vert \geq 2\left\Vert SS^{+}XS^{+}S\right\Vert ,  \tag{$5^{\prime }$}
\end{equation}%
for every operator with closed range $S\in \mathfrak{B}(H),$

\begin{equation}
\forall X\in \mathfrak{B}(H),\ \left\Vert S^{\ast }XS^{-1}+S^{-1}XS^{\ast
}\right\Vert \geq 2\left\Vert X\right\Vert ,  \tag{$6^{\prime }$}
\end{equation}%
for every invertible operator $S\in \mathfrak{B}(H).$
\end{corollary}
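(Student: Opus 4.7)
The plan is to mirror the proof of Corollary 1 (the analogous statement for $(N-AGMI)$), with the simplification that for selfadjoint operators the role played there by Remark 1 is trivialized by the identity $S^{*}=S$. Since Proposition 5 already shows that the inequalities $(2),(3),(2'),(3')$ are each equivalent to $(S-AGMI)$, it suffices to insert the new inequalities $(5),(6),(5'),(6')$ between $(S-AGMI)$ at the top and $(2),(3),(2'),(3')$ at the bottom.

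First I would derive $(5)$ directly from $(S-AGMI)$. Given $S,R\in\mathfrak{B}(H)$ with closed ranges and an arbitrary $X\in\mathfrak{B}(H)$, apply $(S-AGMI)$ with $A=S$, $B=R$, and with $X$ replaced by the operator $S^{+}XR^{+}$. Using the Moore--Penrose identities $SS^{+}S=S$ and $RR^{+}R=R$ one gets $S^{*}SS^{+}=(SS^{+}S)^{*}=S^{*}$ and $R^{+}RR^{*}=(RR^{+}R)^{*}=R^{*}$, which makes the three quantities appearing in $(S-AGMI)$ collapse to $A^{*}AX=S^{*}XR^{+}$, $XBB^{*}=S^{+}XR^{*}$, and $AXB=SS^{+}XR^{+}R$; this is exactly $(5)$. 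Then $(6)$, $(5^{\prime})$, $(6^{\prime})$ follow from $(5)$ by elementary specializations (take $S,R$ invertible for $(6)$; set $R=S$ for $(5^{\prime})$; combine the two for $(6^{\prime})$).

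To close the loop I would use the observation that when $S$ is selfadjoint with closed range one has $S^{*}=S$, and the Moore--Penrose inverse of a selfadjoint operator is again selfadjoint (directly from the four defining equations), so $(S^{+})^{*}=S^{+}$; the same holds for $R$. Consequently, restricting $(5)$ to selfadjoint closed-range pairs $S,R$ recovers $(2)$ verbatim. The same restriction shows $(6)\Rightarrow(3)$, $(5^{\prime})\Rightarrow(2^{\prime})$, and $(6^{\prime})\Rightarrow(3^{\prime})$. By Proposition 5, each of the four right-hand inequalities is equivalent to $(S-AGMI)$, which completes the circle.

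No part of this is a serious obstacle: the content of the corollary is essentially a bookkeeping extension of Proposition 5, and the one piece of real work, namely the substitution $X\mapsto S^{+}XR^{+}$ in $(S-AGMI)$, is the same device already used in Corollary 1 for the $(N-AGMI)$ version; what was provided there by Remark 1 is provided here for free by selfadjointness.
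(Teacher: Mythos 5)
Your proof is correct and follows essentially the same route as the paper: $(5)$ is obtained from $(S-AGMI)$ by the substitution $X\mapsto S^{+}XR^{+}$ together with the Moore--Penrose identities $S^{\ast }SS^{+}=S^{\ast }$ and $R^{+}RR^{\ast }=R^{\ast }$, after which $(6),\ (5^{\prime }),\ (6^{\prime })$ follow by specialization and the loop is closed by restricting to selfadjoint operators and invoking the equivalences of the preceding proposition. Your remark that selfadjointness renders the appeal to Remark 1 superfluous here is accurate.
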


\begin{proof}
Assume $(S-AGMI)$ holds. Prove that $(5)$ holds.

Let $S,R\in \mathcal{R}(H)$, and $X\in \mathfrak{B}(H)$. Since, $SS^{+}S=S$
and $RR^{+}R=R$, then we have 
\begin{eqnarray*}
\left\Vert S^{\ast }XR^{+}+S^{+}XR^{\ast }\right\Vert &=&\left\Vert S^{\ast
}S\left( S^{+}XR^{+}\right) +\left( S^{+}XR^{+}\right) RR^{\ast }\right\Vert
, \\
&\geq &2\left\Vert SS^{+}XR^{+}R\right\Vert \text{, \ \ \ (from }(S-AGMI)%
\text{).}
\end{eqnarray*}

This proves $(5)$.

It is clear that $(5)$ implies $(6),\ (5^{\prime }),\ (6^{\prime })$, and
using $Remark\ 1$, it is clear that $(5)$ (resp. $(6),\ (5^{\prime }),\
(6^{\prime })$) implies $(2)$ (resp. $(3),\ (2^{\prime }),\ (3^{\prime })$).
\end{proof}

Note that the six operator inequalities $(2)-(4)$ and $(2^{^{\prime
}})-(4^{^{\prime }})$ given in $Proposition\ 6$ are generated by a pair of
selfadjoint operators and a single selfadjoint operator, respectively.

We shall interest to describe the class of

$\mathbf{(i)}\ $all invertible operators $S\in \mathfrak{B}(H)$ satisfying \
the operator inequality $(3^{^{\prime }}),$

$\mathbf{(ii)}\ $all operators with closed ranges $S\in \mathfrak{B}(H)$
satisfying \ the operator inequality $(2^{^{\prime }}),$

$\mathbf{(iii)}\ $all operators $S\in \mathfrak{B}(H)$ satisfying \ the
operator inequality $(4^{^{\prime }}).$

We shall prove that the class

\begin{enumerate}
\item[$\bullet $] $\mathbf{(i)}$ is the class of all invertible selfadjoint
operators in $\mathfrak{B}(H)$ multiplied by nonzero scalars$,$

\item[$\bullet $] $\mathbf{(ii)}$ is the class of all selfadjoint operators
with closed ranges in $\mathfrak{B}(H)$ multiplied by nonzero scalars$,$

\item[$\bullet $] $\mathbf{(iii)}$ is the class of all selfadjoint operators
in $\mathfrak{B}(H)$ multiplied by nonzero scalars$..$
\end{enumerate}

We shall present here all these characterizations and others of the class of
all invertible selfadjoint operators multiplied by nonzero scalars, the
class of all selfadjoint operators with closed ranges multiplied by nonzero
scalars, and the class of all selfadjoint operators multiplied by nonzero
scalars,

We need the following lemma.

\begin{lemma}
$[12]\ $Let $\lambda ,\ \mu \in \mathbb{C}^{\ast }$ such that $\frac{\lambda 
}{\mu }+\frac{\mu }{\lambda }\in \mathbb{R}$, and $\left\vert \frac{\lambda 
}{\mu }+\frac{\mu }{\lambda }\right\vert \geq 2$.\ Then there exists $\theta
\in \lbrack 0,\pi )$ such that $\lambda ,\ \mu \in D_{_{\theta }}.$
\end{lemma}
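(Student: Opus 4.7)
The plan is to reduce the two-variable hypothesis to a one-variable statement by setting $z = \lambda/\mu \in \mathbb{C}^{\ast}$. The assumptions then become $z + z^{-1} \in \mathbb{R}$ together with $|z + z^{-1}| \geq 2$, and the desired conclusion becomes simply $z \in \mathbb{R}$. For then $\lambda = z\mu$ is a real scalar multiple of $\mu$, so taking $\theta \in [0, \pi)$ to be the representative of $\arg(\mu)$ modulo $\pi$ yields $\mu \in D_{\theta}$ and consequently $\lambda \in D_{\theta}$.

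To analyze the reduced problem, write $z = r e^{i\phi}$ with $r > 0$ and $\phi \in [0, 2\pi)$. A direct expansion gives
\begin{equation*}
z + z^{-1} = \bigl(r + r^{-1}\bigr)\cos\phi + i\bigl(r - r^{-1}\bigr)\sin\phi.
\end{equation*}
The reality hypothesis forces the imaginary part to vanish, so either $\sin\phi = 0$ or $r = 1$. In the first subcase $z \in \mathbb{R}$, which is exactly what we need. In the second subcase $z + z^{-1} = 2\cos\phi$, and the magnitude hypothesis yields $2 \leq |z + z^{-1}| = 2|\cos\phi| \leq 2$, forcing $|\cos\phi| = 1$, hence $\sin\phi = 0$, hence $z \in \mathbb{R}$ again.

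Thus both subcases collapse to the same conclusion $\lambda/\mu \in \mathbb{R}$, which is exactly the condition that $\lambda$ and $\mu$ lie on a common real line through the origin. There is no real obstacle here: the only subtlety is making sure that the magnitude bound $\geq 2$ is used to eliminate the spurious possibility $r = 1$ with $\phi \notin \{0, \pi\}$, which the computation above handles immediately since on the unit circle the quantity $z + z^{-1}$ is automatically real with modulus strictly less than $2$ away from $\{\pm 1\}$.
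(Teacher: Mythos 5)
Your proof is correct and follows essentially the same route as the paper's: both write the quantity in polar form, observe that the vanishing of the imaginary part $\left(r-r^{-1}\right)\sin\phi$ forces either $\sin\phi=0$ or $r=1$, and use the modulus hypothesis $\left\vert z+z^{-1}\right\vert\geq 2$ to eliminate the unit-circle case. The only cosmetic difference is that you normalize to the single variable $z=\lambda/\mu$ first, while the paper works with the two polar decompositions $\lambda=re^{i\alpha}$, $\mu=le^{i\beta}$ directly; the computation is identical.
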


\begin{proof}
Let $\lambda =re^{i\alpha },\ \mu =le^{i\beta }$ be the polar decomposition
of $\lambda ,\ \mu $.\ Then we have 
\begin{equation*}
\frac{\lambda }{\mu }+\frac{\mu }{\lambda }=\left( \frac{r}{l}+\frac{l}{r}%
\right) \cos \left( \alpha -\beta \right) +i\left( \frac{r}{l}-\frac{l}{r}%
\right) \sin \left( \alpha -\beta \right) .
\end{equation*}

Thus, $r=l$ or $\alpha -\beta \equiv 0\ (\func{mod}.\pi )$.\ The case $r=l$
also gives $\alpha -\beta \equiv 0\ (\func{mod}.\pi )$. Hence, the prof is
completed.
\end{proof}

\begin{proposition}
$[12]\ $Let $S$ be an invertible operator in $\mathfrak{B}(H)$. Then the
following properties are equivalent:

$(i)\ S$ is a selfadjoint operator multiplied by a nonzero scalar,

$(ii)\ \forall X\in \mathfrak{B}(H),\ \left\Vert
SXS^{-1}+S^{-1}XS\right\Vert =\left\Vert S^{\ast }XS^{-1}+S^{-1}XS^{\ast
}\right\Vert ,$

$(iii)\ \forall X\in \mathfrak{B}(H),\ \left\Vert
SXS^{-1}+S^{-1}XS\right\Vert \geq \left\Vert S^{\ast }XS^{-1}+S^{-1}XS^{\ast
}\right\Vert ,$

$(iv)\ \forall X\in \mathfrak{B}(H),\ \left\Vert
SXS^{-1}+S^{-1}XS\right\Vert \geq 2\left\Vert X\right\Vert .$
\end{proposition}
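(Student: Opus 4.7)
I would establish the cycle $(i)\Rightarrow(ii)\Rightarrow(iii)\Rightarrow(iv)\Rightarrow(i)$, with only the last implication requiring real work.

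For $(i)\Rightarrow(ii)$, write $S=\alpha T$ with $\alpha\in\mathbb{C}^{*}$ and $T=T^{*}$. A direct substitution yields
\begin{equation*}
SXS^{-1}+S^{-1}XS=TXT^{-1}+T^{-1}XT,\qquad S^{*}XS^{-1}+S^{-1}XS^{*}=\tfrac{\bar\alpha}{\alpha}\bigl(TXT^{-1}+T^{-1}XT\bigr),
\end{equation*}
so both norms coincide because $|\bar\alpha/\alpha|=1$. The step $(ii)\Rightarrow(iii)$ is trivial, and $(iii)\Rightarrow(iv)$ follows by combining the hypothesis with inequality $(6^{\prime})$ of the preceding corollary, which gives $\|S^{*}XS^{-1}+S^{-1}XS^{*}\|\geq 2\|X\|$ for every invertible $S$.

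The substance lies in $(iv)\Rightarrow(i)$, which I would treat in two stages. The first stage is to show that $S$ is normal. Applying the triangle inequality to $(iv)$ yields
\begin{equation*}
\forall X\in\mathfrak{B}(H),\quad \|SXS^{-1}\|+\|S^{-1}XS\|\geq \|SXS^{-1}+S^{-1}XS\|\geq 2\|X\|,
\end{equation*}
which is precisely hypothesis $(iv)$ of Proposition $2$. Hence $S$ is normal.

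The second stage exploits normality via the spectral calculus. Realize $S$ through its spectral measure, so that on the eigen-decomposition of $S$ the map $\phi(X):=SXS^{-1}+S^{-1}XS$ becomes the Schur multiplier whose value at the pair $(\lambda,\mu)\in\sigma(S)\times\sigma(S)$ is $m(\lambda,\mu):=\lambda/\mu+\mu/\lambda$. Testing $(iv)$ against rank-one matrix units $E_{ij}$ supported on two-dimensional spectral subspaces yields $|m(\lambda,\mu)|\geq 2$ for all $\lambda,\mu\in\sigma(S)$. More delicate tests, built from non-trivial combinations of $E_{ii},E_{jj},E_{ij},E_{ji}$ on a two-dimensional spectral block, produce a strictly smaller lower ratio for the Schur multiplier whenever $\operatorname{Im} m(\lambda,\mu)\neq 0$, and thus force $m(\lambda,\mu)\in\mathbb{R}$. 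With both conditions of Lemma $8$ satisfied, one obtains $\theta\in[0,\pi)$ with $\sigma(S)\subset D_{\theta}$. Then $e^{-i\theta}S$ is normal with real spectrum, hence selfadjoint, and $(i)$ follows.

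The principal obstacle is the Schur-multiplier step that forces $m(\lambda,\mu)\in\mathbb{R}$: the elementary tests $X=E_{ij}$ and $X=I+t(E_{ij}+E_{ji})$ yield only $|m|\geq 2$ and $|\mathrm{Re}\,m|\geq 2$, which are not yet enough to rule out a nonzero imaginary part. What is needed is to recognise $(iv)$ as the statement that the Schur reciprocal multiplier (with matrix $1/M_{ij}$) has operator norm at most $1/2$ and to verify directly, on a $2\times 2$ spectral block, that this fails whenever $\operatorname{Im} m \neq 0$ through a carefully chosen rank-one witness. A Berberian-type reduction can then be invoked to transfer the argument from pure point spectra to the general case.
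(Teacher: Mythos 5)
Your chain $(i)\Rightarrow(ii)\Rightarrow(iii)\Rightarrow(iv)$ and the first stage of $(iv)\Rightarrow(i)$ (normality of $S$ via the triangle inequality, $\left\Vert SXS^{-1}\right\Vert +\left\Vert S^{-1}XS\right\Vert \geq \left\Vert SXS^{-1}+S^{-1}XS\right\Vert \geq 2\left\Vert X\right\Vert$, and Proposition $2$) are correct and coincide with the paper's proof. The gap is in the second stage, and you have flagged it yourself: the assertion that ``more delicate tests \dots force $m(\lambda ,\mu )\in \mathbb{R}$'' is precisely the point that has to be proved, and no such test is exhibited. As you concede, the elementary choices $X=E_{ij}$ and $X=I+t(E_{ij}+E_{ji})$ only control $\left\vert m\right\vert$ and $\func{Re}m$, and Lemma $8$ needs \emph{both} $\left\vert \lambda /\mu +\mu /\lambda \right\vert \geq 2$ and $\lambda /\mu +\mu /\lambda \in \mathbb{R}$ before it can place $\lambda ,\mu$ on a common line $D_{_{\theta }}$. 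Until a concrete witness killing $\func{Im}m$ is produced, $(iv)\Rightarrow (i)$ is not established.

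For comparison, the paper closes exactly this hole. It first approximates $S$ uniformly by invertible normal operators $S_{n}$ with finite spectrum (via the spectral measure), so that $(iv)$ holds for $S_{n}$ with constant $2-\epsilon$; restricting to a two-dimensional spectral block turns this into the Schur-multiplier bound $\left\Vert A\circ X\right\Vert \geq (2-\epsilon )\left\Vert X\right\Vert$ with $A=\left[ \begin{smallmatrix} 2 & \gamma _{n} \\ \gamma _{n} & 2 \end{smallmatrix}\right]$, $\gamma _{n}=\lambda _{n}/\mu _{n}+\mu _{n}/\lambda _{n}$, which is then inverted into the reverse bound $\left\Vert B\circ X\right\Vert \leq \left\Vert X\right\Vert /(2-\epsilon )$ for the reciprocal multiplier $B$ with diagonal entries $\frac{1}{2}$ and off-diagonal entries $1/\gamma _{n}$ --- exactly the reformulation you propose but do not carry out. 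The explicit witness is $X=\left[ \begin{smallmatrix} 1 & ia \\ ia & 1 \end{smallmatrix}\right]$ with $a>0$: expanding $\left\Vert B\circ X\right\Vert$, letting $n\rightarrow \infty$, $\epsilon \rightarrow 0$, and then varying $a$ forces $\func{Im}(\lambda /\mu +\mu /\lambda )=0$ while $X=E_{ij}$ gives $\left\vert \gamma _{n}\right\vert \geq 2-\epsilon$. Note also that the passage from possibly continuous spectrum to $2\times 2$ blocks is handled in the paper by this finite-spectrum approximation with an $\epsilon$-relaxed inequality, not by a Berberian extension; your alternative is plausible but is likewise only gestured at, so both of the two computations on which your argument rests remain to be supplied.
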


\begin{proof}
The two implications $(i)\Longrightarrow (ii)$ and $(ii)\Longrightarrow
(iii) $ are trivial.

The implication $(iii)\Longrightarrow (iv)$ follows from $Corollary$ $%
4.6^{\prime }.$

$(iv)\Rightarrow (i).$ Assume $(iv)$ holds.

So, we have%
\begin{equation*}
\forall X\in \mathfrak{B}(H),\ \left\Vert SXS^{-1}\right\Vert +\left\Vert
S^{-1}XS\right\Vert \geq 2\left\Vert X\right\Vert .
\end{equation*}

Using $Proposition\ 2$, then $S$ is normal. \ Using the spectral measure of $%
S$, there exists a sequence $(S_{_{n}})$ of invertible normal operators with
finite spectrum such that:

$(a)\ S_{_{n}}\rightarrow S$ uniformly,

$(b)$ for all $\lambda \in \sigma (S)$, there exists a sequence $(\lambda
_{_{n}})$ such that $\lambda _{_{n}}\in \sigma (S_{_{n}})$, for all $n$ and $%
\lambda _{_{n}}\rightarrow \lambda .$

Let $\lambda ,\ \mu \in \sigma (S)$ and let $\epsilon >0$. Using $(ii),\
(a),\ $and $(b)$, there exists an integer $N\geq 1$ such that 
\begin{equation*}
(1)\ \ \ \ \forall n>N,\ \forall X\in \mathfrak{B}(H),\ \left\Vert
S_{_{n}}XS_{_{n}}^{-1}+S_{_{n}}^{-1}XS_{_{n}}\right\Vert \geq (2-\epsilon
)\left\Vert X\right\Vert ,
\end{equation*}%
and there exist two sequences $(\lambda _{_{n}}),\ (\mu _{_{n}})$ such that $%
\lambda _{_{n}},\ \mu _{_{n}}\in \sigma (S_{_{n}}),\ $for all $n,$ and $%
\lambda _{_{n}}\rightarrow \lambda ,\ \mu _{_{n}}\rightarrow \mu .$

Let $n>N$ and since $S_{_{n}}$ is normal, with finite spectrum, there exist $%
p$ orthogonal projections $E_{_{1}},...,E_{_{p}}$ in $\mathfrak{B}(H)$ such
that $E_{_{i}}E_{_{j}}=0$, if $i\neq j$, $\ \sum_{i=1}^{p}E_{_{i}}=I,\
S_{_{n}}=\sum_{i=}^{p}\alpha _{_{i}}E_{i}$, where $\sigma (S_{_{n}})=\left\{
\alpha _{_{1}},...,\alpha _{_{p}}\right\} ,\ \alpha _{_{1}}=\lambda
_{_{n}},\ \alpha _{_{2}}=\mu _{_{n}}.$

Then by $(1)$, and if we put $A=\left[ 
\begin{array}{cc}
2 & \gamma _{_{n}} \\ 
\gamma _{_{n}} & 2%
\end{array}%
\right] $, where $\gamma _{_{n}}=\frac{\lambda _{_{n}}}{\mu _{_{n}}}+\frac{%
\mu _{_{n}}}{\lambda _{_{n}}}$, we obtain%
\begin{equation*}
(2)\ \ \ \ \forall X\in \mathfrak{B}(\mathbb{C}^{2}),\ \left\Vert A\circ
X\right\Vert \geq \left( 2-\epsilon \right) \left\Vert X\right\Vert .
\end{equation*}

If we put $\delta _{_{n}}=\frac{1}{\gamma _{_{n}}},$ and $B=\left[ 
\begin{array}{cc}
\frac{1}{2} & \delta _{_{n}} \\ 
\delta _{_{n}} & \frac{1}{2}%
\end{array}%
\right] $, then from the last inequality, we also have%
\begin{equation*}
(3)\ \ \ \ \forall X\in \mathfrak{B}(\mathbb{C}^{2}),\ \left\Vert B\circ
X\right\Vert \leq \frac{\left\Vert X\right\Vert }{\left( 2-\epsilon \right) }%
.
\end{equation*}

From $(2)$, we deduce $\left\vert \frac{\lambda _{_{n}}}{\mu _{_{n}}}+\frac{%
\mu _{_{n}}}{\lambda _{_{n}}}\right\vert \geq 2-\epsilon $.\ Hence, $%
\left\vert \frac{\lambda }{\mu }+\frac{\mu }{\lambda }\right\vert \geq 2$.
Put, $\beta _{_{n}}=\func{Im}\gamma _{_{n}},\ \gamma =\lim \gamma _{_{n}},\
\beta =\lim \beta _{_{n}}.$

On the other hand, if in $(3)$, we put $X=\left[ 
\begin{array}{cc}
1 & ia \\ 
ia & 1%
\end{array}%
\right] $, for $a>0$, we obtain 
\begin{equation*}
\frac{1}{4}+a^{2}\left\vert \gamma _{_{n}}\right\vert ^{2}+a\left\vert \beta
_{_{n}}\right\vert \leq \frac{1+a^{2}}{\left( 2-\epsilon \right) ^{2}}\text{.%
}
\end{equation*}

Hence,%
\begin{equation*}
\frac{1}{4}+a^{2}\left\vert \gamma \right\vert ^{2}+a\left\vert \beta
\right\vert \leq \frac{1+a^{2}}{\left( 2-\epsilon \right) ^{2}}.
\end{equation*}

Thus, $a\left\vert \gamma \right\vert ^{2}+\left\vert \beta \right\vert \leq 
\frac{a}{4}$, for every $a>0.$\ This gives us, $\func{Im}\left( \frac{%
\lambda }{\mu }+\frac{\mu }{\lambda }\right) =\beta =0$.\ So, from the above
lemma, $\lambda $ and $\mu $ belongs to a straight line through the origin.
Then there exists $\theta \in \lbrack 0,\pi )$ such that $\sigma (S)\subset
D_{_{\theta }}$. Therefore, $M=e^{-i\theta }S$ is selfadjoint, and $%
S=e^{i\theta }M$. This proves $(i)$.
\end{proof}

In the next proposition, and from the last proposition concerning the
invertible case, we conclude for the characterization of the class of all
selfadjoint operators in $\mathfrak{B}(H)$\ multiplied by nonzero scalars,

\begin{proposition}
$[20]\ $Let $S\in \mathfrak{B}(H)$. The two following properties are
equivalent

$(i)$ $S$ is a selfadjoint operator multiplied by a nonzero scalar,

$(ii)$ $\forall X\in \mathfrak{B}(H),\ \left\Vert S^{2}X+XS^{2}\right\Vert
\geq 2\left\Vert SXS\right\Vert .$
\end{proposition}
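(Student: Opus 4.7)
The plan is to mirror the structure of the proof of Proposition 4, with Proposition 8 (the invertible characterization of scalar multiples of selfadjoint operators) playing the role that Proposition 2 played there, and $(4^{\prime})$ of Proposition 6 replacing $(N\text{-}AGMI)$.

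For $(i)\Rightarrow (ii)$, writing $S=\alpha T$ with $T=T^{\ast}$ and $\alpha \in \mathbb{C}^{\ast}$ gives $S^{2}X+XS^{2}=\alpha^{2}(T^{2}X+XT^{2})$ and $SXS=\alpha^{2}TXT$, so (ii) reduces, up to the factor $|\alpha|^{2}$, to $(4^{\prime})$ of Proposition 6 applied to the selfadjoint operator $T$.

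For $(ii)\Rightarrow (i)$, the triangle inequality immediately gives
\[
\|S^{2}X\|+\|XS^{2}\|\geq \|S^{2}X+XS^{2}\|\geq 2\|SXS\|,
\]
so by Proposition 4, $S$ is normal. This is the key simplification with respect to the proof of Proposition 4: because a normal operator satisfies $\ker S=\ker S^{\ast}$, the two cases ``$S$ injective with closed range'' and ``$S$ surjective'' each reduce at once to ``$S$ invertible.'' In the invertible case, substituting $X\mapsto S^{-1}YS^{-1}$ in (ii) yields
\[
\|SYS^{-1}+S^{-1}YS\|\geq 2\|Y\|,\quad \forall Y\in \mathfrak{B}(H),
\]
and Proposition 8 then forces $S$ to be a nonzero scalar multiple of a selfadjoint operator.

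The main obstacle is the general case, where $S$ is normal but possibly non-invertible; here I would follow Case 3 of the proof of Proposition 4 almost verbatim. Introduce the continuous functional $F(X)=\|S^{2}X+XS^{2}\|-2\|SXS\|$ on $(\mathfrak{B}(H))_{1}$, approximate $S$ uniformly by a sequence $(S_{n})\subset \mathfrak{D}(H)$, and split according to whether the set $D=\{X\in (\mathfrak{B}(H))_{1}:F(X)>0\}$ is dense or empty. In the dense sub-case, the continuity of the perturbed functionals $F_{n}$ and their uniform convergence to $F$ force each $S_{n}$ (for $n$ large) to satisfy (ii); since each $S_{n}$ is left- or right-invertible, the previous step identifies it as $\alpha_{n}T_{n}$ with $T_{n}=T_{n}^{\ast}$ and $\alpha_{n}\in \mathbb{C}^{\ast}$, and one passes to the limit by normalizing $\|T_{n}\|=1$ and extracting a convergent subsequence of the unit-modulus phases $\alpha_{n}/|\alpha_{n}|\in S^{1}$. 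The most delicate step is the empty sub-case $D=\emptyset$, where the identity $\|S^{2}X+XS^{2}\|=2\|SXS\|$ must be exploited to force $S$ invertible (and so reduce once more to the invertible case); unlike the analogous step in Proposition 4, the norm of a sum of two non-parallel rank-one operators does not factor, so the direct rank-one test has to be replaced by a spectral-calculus argument exploiting the already established normality of $S$.
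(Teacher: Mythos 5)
Your proposal follows the paper's proof essentially step for step: $(i)\Rightarrow(ii)$ via $(S\text{-}AGMI)$, then for the converse the triangle inequality plus Proposition~4 to get normality, reduction of the left/right-invertible case to the invertible case (where the substitution $X\mapsto S^{-1}YS^{-1}$ and Proposition~8 finish), and the density argument with $F$, $F_n$ and the dichotomy on $D$ for the general case. Your passage to the limit in the dense sub-case (normalizing $\|T_n\|=1$ and extracting a convergent subsequence of phases) is in fact \emph{more} careful than the paper, which only invokes closedness of the class of selfadjoint operators and glosses over the scalar.

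The one place you stop short is the sub-case $D=\emptyset$, which you correctly flag as delicate but leave unresolved, suggesting a ``spectral-calculus argument'' because the norm of $S^{2}x\otimes y+x\otimes S^{\ast 2}y$ does not factor. The paper's resolution is simpler than you anticipate and still uses the rank-one test: non-factorization is harmless because the triangle inequality bounds that norm from both sides. Concretely, normalizing $\|S\|=1$ (so $\|S^{2}\|=1$ by the already-established normality), the identity $\left\Vert S^{2}x\otimes y+x\otimes S^{\ast 2}y\right\Vert =2\left\Vert Sx\right\Vert \left\Vert S^{\ast }y\right\Vert$ gives: if $S^{\ast }y=0$ then $S^{2}x\otimes y=0$ for all $x$, forcing $y=0$ since $S^{2}\neq 0$, so $S$ has dense range; and for unit vectors the triangle inequality yields $\left\Vert S^{\ast 2}y\right\Vert \leq \left\Vert S^{2}x\right\Vert +2\left\Vert Sx\right\Vert \left\Vert S^{\ast }y\right\Vert \leq 3\left\Vert Sx\right\Vert$, whence $\left\Vert Sx\right\Vert \geq \tfrac{1}{3}$ after taking the supremum over $y$. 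So $S$ is bounded below with dense range, hence invertible, and the invertible case applies. With this step filled in (either as above or by the approximate-eigenvector version of the same estimate), your argument is complete and coincides with the paper's.
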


\begin{proof}
We may assume without loss of generality that $\left\Vert S\right\Vert =1.$

$(i)\Rightarrow (ii).$ This implication follows immediately from $(S-AGMI).$

$(ii)\Rightarrow (i).$\ Assume $(ii)$ holds.

Then, we have 
\begin{equation*}
\forall X\in \mathfrak{B}(H),\ \left\Vert S^{2}X\right\Vert +\left\Vert
XS^{2}\right\Vert \geq 2\left\Vert SXS\right\Vert .
\end{equation*}

Hence, from $Proposition\ 4$, $S$ is normal. So, we prove $(i)$ in two cases.

Case 1. $S\in \mathfrak{D}(H)$..

Then, $S$ is invertible. so from $(ii)$, we obtain%
\begin{equation*}
\forall X\in \mathfrak{B}(H),\ \left\Vert SXS^{-1}+S^{-1}XS\right\Vert \geq
2\left\Vert X\right\Vert .
\end{equation*}

Using the last proposition, we deduce $(i).$

Case 2. General situation.

Applying triangular inequality in $(ii)$, we deduce that $\left\Vert
S^{2}\right\Vert =\left\Vert S\right\Vert ^{2}=1.$

Define the real function $F$ on the complete metric space $\left( \mathfrak{B%
}(H)\right) _{1}$ by 
\begin{equation*}
\forall X\in \left( \mathfrak{B}(H)\right) _{1},\ F(X)=\left\Vert
S^{2}X+XS^{2}\right\Vert -2\left\Vert SXS\right\Vert ,
\end{equation*}%
and for $n\geq 1,$ define the real function $F_{n}$ on $\left( \mathfrak{B}%
(H)\right) _{1}$ by 
\begin{equation*}
\forall X\in \mathfrak{B}(H),\ F_{n}(X)=\left\Vert
S_{n}^{2}X+XS_{n}^{2}\right\Vert -2\left\Vert S_{n}XS_{n}\right\Vert .
\end{equation*}

Put $D=\left\{ X\in \left( \mathfrak{B}(H)\right) _{1}:F(X)>0\right\} .$
Then there are two cases, $D=\varnothing $, $D\neq \varnothing .$

$\mathbf{1.}$ $D=\varnothing .$ So, it follows that%
\begin{equation}
\forall X\in \mathfrak{B}(H),\ \left\Vert S^{2}X+XS^{2}\right\Vert
=2\left\Vert SXS\right\Vert .  \tag{$\ast $}
\end{equation}

From this equality, we have%
\begin{equation*}
\forall x,y\in H,\ \left\Vert S^{2}x\otimes y+x\otimes S^{\ast
2}y\right\Vert =2\left\Vert Sx\right\Vert \left\Vert S^{\ast }y\right\Vert .
\end{equation*}

Using this last equality and since $S^{2}\neq 0$, we deduce that $\ker
S^{\ast }=\left\{ 0\right\} $. Hence, $S$ is with dense range. Using again
this last equality, we obtain the following inequality,%
\begin{equation*}
\forall x,y\in \left( H\right) _{1},\ \left\Vert S^{2}x\right\Vert
+2\left\Vert Sx\right\Vert \left\Vert S^{\ast }y\right\Vert \geq \left\Vert
S^{\ast 2}y\right\Vert .
\end{equation*}

By taking the supremum over $y\in \left( H\right) _{1}$, we obtain that $%
\left\Vert Sx\right\Vert \geq \frac{1}{3}\left\Vert x\right\Vert $, for
every $x\in H$. Thus, $S$ is bounded below with dense range. Hence, $S$ is
invertible. So, from $(\ast )$, it follows that 
\begin{equation*}
\forall X\in \mathfrak{B}(H),\ \left\Vert SXS^{-1}+S^{-1}XS\right\Vert
=2\left\Vert X\right\Vert .
\end{equation*}

Then from the last proposition, $(i)$ holds.

$\mathbf{2.}$ $D\neq \varnothing .$ From the fact that $F$ is a positive
continuous map on $\left( \mathfrak{B}(H)\right) _{1}$, it follows that 
\begin{equation*}
\overline{D}=\overline{F^{-1}\left( \left( 0,\infty \right) \right) }%
=F^{-1}\left( [0,\infty )\right) =\left\{ X\in \left( \mathfrak{B}(H)\right)
_{1}:F(X)\geq 0\right\} =\left( \mathfrak{B}(H)\right) _{1}.
\end{equation*}

Let $X\in D$, and $\epsilon >0.$ Since $S_{n}\rightarrow S$ uniformly, then
there exists an integer $N\geq 1$ (depends only in $\epsilon $) such that 
\begin{equation*}
\forall n\geq N,\ \forall Y\in \left( \mathfrak{B}(H)\right) _{1},\
\left\vert F(Y)-F_{n}(Y)\right\vert \leq \epsilon .
\end{equation*}

Using the same argument as used in $Proposition\ 4$, it follows that%
\begin{equation*}
\forall X\in D,\ \forall n\geq N,\ \ F_{n}(X)\geq 0.
\end{equation*}

Since each $F_{n}$ is a continuous map on $\left( \mathfrak{B}(H)\right)
_{1} $ and $D$ is dense in $\left( \mathfrak{B}(H)\right) _{1}$, then%
\begin{equation*}
\forall X\in \left( \mathfrak{B}(H)\right) _{1},\ \forall n\geq N,\
F_{n}(X)\geq 0.
\end{equation*}

So, it follows that%
\begin{equation*}
\forall X\in \mathfrak{B}(H),\ \forall n\geq N,\ \left\Vert
S_{n}^{2}X+XS_{n}^{2}\right\Vert \geq 2\left\Vert S_{n}XS_{n}\right\Vert .
\end{equation*}

Since for each $n\geq 1$, $S_{n}\in \mathfrak{D}(H),$ using the case 1, we
obtain that $S_{n}$ is a selfadjoint with closed range multiplied by nonzero
scalar, for every $n\geq N$. Since $S_{n}$ $\rightarrow S$ uniformly, and
the class of all selfadjoint operators in $\mathfrak{B}(H)$ is closed in $%
\mathfrak{B}(H)$, this proves $(i)$.
\end{proof}

\begin{corollary}
$[18]\ $Let $S$ be an operator with closed range in $\mathfrak{B}(H)$. Then
the following properties are equivalent:

$(i)\ S$ is a selfadjoint operator multiplied by a non zero scalar,

$(ii)\ \forall X\in \mathfrak{B}(H),\ \left\Vert SXS^{+}+S^{+}XS\right\Vert
=\left\Vert S^{\ast }XS^{+}+S^{+}XS^{\ast }\right\Vert ,$

$(iii)\ \forall X\in \mathfrak{B}(H),\ \left\Vert SXS^{+}+S^{+}XS\right\Vert
\geq \left\Vert S^{\ast }XS^{+}+S^{+}XS^{\ast }\right\Vert ,$

$(iv)\ \forall X\in \mathfrak{B}(H),\ \left\Vert SXS^{+}+S^{+}XS\right\Vert
\geq 2\left\Vert SS^{+}XS^{+}S\right\Vert .$
\end{corollary}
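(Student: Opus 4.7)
The plan is to prove the cycle (i)$\,\Rightarrow\,$(ii)$\,\Rightarrow\,$(iii)$\,\Rightarrow\,$(iv)$\,\Rightarrow\,$(i), in parallel with the analogous closed-range corollary of Section 2. For (i)$\,\Rightarrow\,$(ii): assuming without loss of generality that $S\neq 0$, write $S=\lambda M$ with $\lambda\neq 0$ and $M$ selfadjoint; then $S^{+}=\lambda^{-1}M^{+}$ and $S^{\ast}=\bar\lambda M$, so one computes $SXS^{+}+S^{+}XS=MXM^{+}+M^{+}XM$ while $S^{\ast}XS^{+}+S^{+}XS^{\ast}=(\bar\lambda/\lambda)(MXM^{+}+M^{+}XM)$; since $|\bar\lambda/\lambda|=1$, the norm equality in (ii) is immediate. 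The implication (ii)$\,\Rightarrow\,$(iii) is trivial, and (iii)$\,\Rightarrow\,$(iv) is immediate by chaining with inequality $(5')$ of the preceding corollary, namely $\|S^{\ast}XS^{+}+S^{+}XS^{\ast}\|\geq 2\|SS^{+}XS^{+}S\|$.

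For the substantive implication (iv)$\,\Rightarrow\,$(i), I would first use the triangle inequality $\|SXS^{+}\|+\|S^{+}XS\|\geq\|SXS^{+}+S^{+}XS\|$ to promote (iv) to the hypothesis (iv) of the normal closed-range corollary of Section 2. That corollary then forces $S$ to be normal, and hence an EP operator. Consequently, in the orthogonal decomposition $H=R(S)\oplus\ker S^{\ast}$, $S$ decomposes as $S_{1}\oplus 0$ with $S_{1}\in\mathfrak{B}(R(S))$ invertible; likewise $S^{+}=S_{1}^{-1}\oplus 0$, and $SS^{+}=S^{+}S$ is the orthogonal projection onto $R(S)$.

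Next I would test (iv) on block-diagonal operators $X=X_{1}\oplus 0$ with $X_{1}\in\mathfrak{B}(R(S))$. A direct computation yields $SXS^{+}+S^{+}XS=(S_{1}X_{1}S_{1}^{-1}+S_{1}^{-1}X_{1}S_{1})\oplus 0$ and $SS^{+}XS^{+}S=X_{1}\oplus 0$, so (iv) specializes to $\|S_{1}X_{1}S_{1}^{-1}+S_{1}^{-1}X_{1}S_{1}\|\geq 2\|X_{1}\|$ for every $X_{1}\in\mathfrak{B}(R(S))$. Applying Proposition 7 to the invertible operator $S_{1}$ gives that $S_{1}$ is a selfadjoint operator multiplied by a nonzero scalar $\lambda$, say $S_{1}=\lambda M_{1}$; hence $S=\lambda(M_{1}\oplus 0)$ with $M_{1}\oplus 0$ selfadjoint, proving (i).

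The main obstacle is the initial reduction to the normal case in (iv)$\,\Rightarrow\,$(i). Without first promoting (iv) to the stronger ``split-norm'' hypothesis and concluding EP, the generic matrix form of $S$ contains a nonzero off-diagonal corner $S_{2}:\ker S^{\ast}\to R(S)$ that prevents $SXS^{+}+S^{+}XS$ from decoupling into a single conjugation problem on $R(S)$; only once EP is in hand does (iv) restrict cleanly to the invertible case handled by Proposition 7.
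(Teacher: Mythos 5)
Your proposal is correct and follows essentially the same route as the paper: the trivial/chaining implications for (i)$\Rightarrow$(ii)$\Rightarrow$(iii)$\Rightarrow$(iv) via inequality $(5')$, and for (iv)$\Rightarrow$(i) the triangle inequality to invoke the normal closed-range characterization (Corollary 3), the EP block decomposition $S=S_{1}\oplus 0$, and Proposition 7 applied to $S_{1}$ on $R(S)$. The only difference is cosmetic: you spell out the computation for (i)$\Rightarrow$(ii) and the block-diagonal test operators explicitly, which the paper leaves implicit.
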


\begin{proof}
The implications $(i)\Rightarrow (ii)$ and $(ii)\Rightarrow (iii)$ are
trivial.

The implication $(iii)\Rightarrow (iv)$\ follows immediately from $%
Corollary\ 4.5^{\prime }$.

$(iv)\Rightarrow (i)$.\ Assume $(iv)$ holds. Applying the triangular
inequality in $(iv),$ we obtain from $Corollary\ 3$, that $S$ is normal
(with a closed range). So that $S$ is an EP operator satisfying $(iv)$.
Then, $S=\left[ 
\begin{array}{cc}
S_{_{1}} & 0 \\ 
0 & 0%
\end{array}%
\right] \left[ 
\begin{array}{c}
R(S) \\ 
\ker S^{\ast }%
\end{array}%
\right] ,$ where $S_{_{1}}$ is invertible on $R(S)$. Hence, we obtain the
following inequality%
\begin{equation*}
\forall X\in \mathfrak{B}(R(S)),\ \left\Vert
S_{1}XS_{1}^{-1}+S_{1}^{-1}XS_{1}\right\Vert \geq 2\left\Vert X\right\Vert .
\end{equation*}

Using $Proposition\ 7$ with the Hilbert space $R(S)$, we obtain that $S_{1}$
is a selfadjoint operator in $\mathfrak{B}(R(S))$ multiplied by a nonzero
scalar. This proves $(i)$.
\end{proof}

\section{On the injective norm of the two operators $X\longmapsto
SXS^{-1}+S^{-1}XS$ and $X\longmapsto S^{\ast }XS^{-1}+S^{-1}XS^{\ast }$,
unitary operators, and characterizations}

Let $E$ be a (real or complex) normed space, and let $\mathfrak{B}=\mathfrak{%
B}(E)$ denote the normed algebra of all bounded linear operators acting on $%
E $.

For $A=(A_{_{1}},...,A_{_{n}}),\ B=(B_{_{1}},...,B_{_{n}})$ be two n-tuples
of elements in $\mathfrak{B}$, we define the elementary operator (induced by 
$A$, $B$) $R_{_{A,B}}$ on $\mathfrak{B}$ by:%
\begin{equation*}
\forall X\in \mathfrak{B},\ R_{_{A,B}}(X)=\sum_{i=1}^{n}A_{_{i}}XB_{_{i}}.
\end{equation*}

We denote by $\mathcal{R}(\mathfrak{B})$, the vector space of all elementary
operators on $\mathfrak{B}$. We define the map $d(.):\mathcal{R}(\mathfrak{B}%
)\longrightarrow \mathbb{R}$ by: 
\begin{equation*}
\forall R\in \mathcal{R}(\mathfrak{B}),\ d(R)=\underset{\left\Vert
X\right\Vert =1=rankX}{\sup }\left\Vert R(X)\right\Vert .
\end{equation*}

We consider the tensor product space 
\begin{equation*}
\mathfrak{B}\otimes \mathfrak{B\ }\mathcal{=\ }\left\{
\sum_{i=1}^{n}A_{_{i}}\otimes B_{_{i}}:n\geq 1,\ A_{_{i}},\ B_{_{i}}\in 
\mathfrak{B},\ i=1,...,n\right\} .
\end{equation*}

We denote by $\left\Vert .\right\Vert _{_{\lambda }}$ be the injective norm
on $\mathfrak{B}\otimes \mathfrak{B}$ given by: 
\begin{equation*}
\left\Vert \sum_{i=1}^{n}A_{_{i}}\otimes B_{_{i}}\right\Vert _{_{\lambda }}=%
\underset{f,g\in (\mathfrak{B}^{^{,}})_{_{1}}}{\sup }\left\vert
\sum_{i=1}^{n}f(A_{_{i}})g(B_{_{i}})\right\vert .
\end{equation*}

For $S$ be an invertible operator in $\mathfrak{B}(H)$, we define the two
particular elementary operators $\varphi _{_{S}},\ \psi _{_{S}}$ on $%
\mathfrak{B}(H)$ by%
\begin{equation*}
\left\{ 
\begin{array}{c}
\forall X\in \mathfrak{B}(H),\ \varphi _{_{S}}(X)=SXS^{^{-1}}+S^{^{-1}}XS,
\\ 
\forall X\in \mathfrak{B}(H),\ \psi _{_{S}}(X)=S^{^{\ast
}}XS^{^{-1}}+S^{^{-1}}XS^{^{\ast }}.%
\end{array}%
\right.
\end{equation*}

\begin{notation}
For $A=(A_{_{1}},...,A_{_{n}})$ be an n-tuple of commuting operators in $%
\mathfrak{B}(H)$, we denote by:

1. $\Gamma _{_{A}}$, the set of all multiplicative functional acting on the
maximal commutative Banach algebra that contains the operators $%
A_{_{1}},...,A_{_{n}},$

2. $\sigma (A)=\left\{ \left( \varphi (A_{_{1}}),...,\varphi
(A_{_{n}})\right) :\varphi \in \Gamma _{_{A}}\right\} $, the joint spectrum
of $A$.
\end{notation}

In this section, we shall prove that $\left\Vert
\sum_{i=1}^{n}A_{_{i}}\otimes B_{_{i}}\right\Vert _{_{\lambda }}=d\left(
R_{_{A,B}}\right) $, and where $A=(A_{_{1}},...,A_{_{n}}),\
B=(B_{_{1}},...,B_{_{n}})\in \mathfrak{B}^{n}$, and then the map $d(.)$ is a
norm on the vector space $\mathcal{R}(\mathfrak{B})$, and the two normed
spaces $(\mathcal{R}(\mathfrak{B}),\ d(.))$ and $(\mathfrak{B}\otimes 
\mathfrak{B,\ }\left\Vert .\right\Vert _{_{\lambda }})$ are isometrically
isomorph.

Concerning the characterizations given in section 4 for the invertible case,
they are generated by $\varphi _{_{S}},$ and by $\varphi _{_{S}}$, $\psi
_{_{S}}$ (where $S$ is an invertible operator in $\mathfrak{B}(H)$). It was
proved that the class of all invertible selfadjoint operators in $\mathfrak{B%
}(H)$\ multiplied by nonzero scalars is the class of all invertible
operators $S\in \mathfrak{B}(H)$ satisfying each of the three following
inequalities:%
\begin{equation*}
\begin{array}{cccc}
\mathbf{(i)}\ \forall X\in \mathfrak{B}(H), & \left\Vert \varphi
_{_{S}}(X)\right\Vert & \geq & 2\left\Vert X\right\Vert , \\ 
\mathbf{(ii)}\ \forall X\in \mathfrak{B}(H), & \left\Vert \varphi
_{_{S}}(X)\right\Vert & = & \left\Vert \psi _{_{S}}(X)\right\Vert , \\ 
\mathbf{(iii)}\ \forall X\in \mathfrak{B}(H), & \left\Vert \varphi
_{_{S}}(X)\right\Vert & \geq & \left\Vert \psi _{_{S}}(X)\right\Vert .%
\end{array}%
\end{equation*}

In this section, we consider an invertible operator $S$ in $\mathfrak{B}(H)$.

It is clear that:%
\begin{equation*}
\underset{\left\Vert X\right\Vert =1}{\inf }\left\Vert \varphi
_{_{S}}(X)\right\Vert \leq 2\leq \underset{\left\Vert X\right\Vert =1}{\sup }%
\left\Vert \varphi _{_{S}}(X)\right\Vert .
\end{equation*}

Then, the above infimum gets its maximal value $2$ if and only if $S$
satisfies the condition $\mathbf{(i)}$. So, $\underset{\left\Vert
X\right\Vert =1}{\inf }\left\Vert \varphi _{_{S}}(X)\right\Vert =2$ if and
only if $S$ is an invertible selfadjoint operator\ multiplied by a nonzero
scalar.

In this section, we aboard the problem when the above supremum gets its
minimal value $2.$ We shall show that:

$\mathbf{(iv)}$ this supremum ( that is $\left\Vert \varphi
_{_{S}}\right\Vert $) gets it minimal value $2$ if and only if $S$ is a
unitary operator multiplied by a nonzero scalar,

$\mathbf{(v)}\ \underset{\left\Vert X\right\Vert =1=rankX}{\sup }\left\Vert
\varphi _{_{S}}(X)\right\Vert =\left\Vert S\otimes
S^{^{-1}}+S^{^{-1}}\otimes S\right\Vert _{\lambda }\geq 2,$

$\mathbf{(vi)}$ this last supremum (that is the injective norm of $\varphi
_{_{S}}$) gets its minimal value $2$ if and only if $S$ is normal and $%
\underset{\lambda ,\mu \in \sigma (S)}{\sup }\left\vert \frac{\lambda }{\mu }%
+\frac{\mu }{\lambda }\right\vert =2.$

From above, the following conditions are equivalent:

$\left( \cdot \right) $ $\underset{\left\Vert X\right\Vert =1}{\inf }%
\left\Vert \varphi _{_{S}}(X)\right\Vert =2=\underset{\left\Vert
X\right\Vert =1}{\sup }\left\Vert \varphi _{_{S}}(X)\right\Vert ,$

$\left( \cdot \right) \ \ \forall X\in \mathfrak{B}(H),\ \left\Vert
SXS^{-1}+S^{-1}XS\right\Vert =2\left\Vert X\right\Vert ,$

$\left( \cdot \right) \ S$ is a unitary reflection operator multiplied by a
nonzero scalar.

\begin{proposition}
$[14].\ $For $A=(A_{_{1}},...,A_{_{n}}),\ B=(B_{_{1}},...,B_{_{n}})$ be two
n-tuples of elements in $\mathfrak{B}$, the following equalities hold%
\begin{eqnarray*}
d\left( R_{_{A,B}}\right) &=&\underset{f,g\in (\mathfrak{B}^{^{,}})_{_{1}}}{%
\sup }\left\vert \sum_{i=1}^{n}f(A_{_{i}})g\left( B_{_{i}}\right)
\right\vert , \\
&=&\underset{f\in (\mathfrak{B}^{^{,}})_{_{1}}}{\sup }\left\Vert
\sum_{i=1}^{n}f(B_{_{i}})A_{_{i}}\right\Vert , \\
&=&\underset{f\in (\mathfrak{B}^{^{,}})_{_{1}}}{\sup }\left\Vert
\sum_{i=1}^{n}f(A_{_{i}})B_{_{i}}\right\Vert .
\end{eqnarray*}
\end{proposition}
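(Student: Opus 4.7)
The plan is to first verify that the three supremum expressions on the right-hand side are mutually equal by a Hahn--Banach argument, and then separately to match one of them to $d(R_{A,B})$ by exploiting the canonical correspondence between norm-one rank-one operators on $E$ and the ``evaluation'' functionals $T \mapsto v^{\ast}(Tu)$ on $\mathfrak{B}(E)$, where $\|u\|=\|v^{\ast}\|=1$.

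For the mutual equality of the three right-hand sides, I would observe that for any $f \in (\mathfrak{B}^{\ast})_{1}$ the element $T_f := \sum_i f(A_i) B_i$ lies in $\mathfrak{B}(E)$, so Hahn--Banach gives $\|T_f\| = \sup_{g \in (\mathfrak{B}^{\ast})_{1}} |g(T_f)| = \sup_g |\sum_i f(A_i) g(B_i)|$; taking the supremum over $f$ identifies the first and third right-hand expressions, and the same argument applied to $\sum_i g(B_i) A_i$ gives the middle one.

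For the main identity $d(R_{A,B}) = \sup_{f,g}|\sum_i f(A_i) g(B_i)|$, I would write an arbitrary norm-one rank-one operator as $X = u \otimes v^{\ast}$ with $\|u\|=\|v^{\ast}\|=1$ and compute $(R_{A,B}(X))(z) = \sum_i v^{\ast}(B_i z)\, A_i u$. Two successive applications of Hahn--Banach inside $E$ then give
\[
\|R_{A,B}(X)\| \;=\; \sup_{\|z\|=\|\phi\|=1}\Bigl|\sum_i v^{\ast}(B_i z)\,\phi(A_i u)\Bigr| \;=\; \sup\Bigl|\sum_i f(A_i) g(B_i)\Bigr|,
\]
where $f(T) = \phi(Tu)$ and $g(T) = v^{\ast}(Tz)$ lie in the unit ball of $\mathfrak{B}^{\ast}$; taking the sup over $u,v^{\ast}$ yields $d(R_{A,B}) \leq \sup_{f,g \in (\mathfrak{B}^{\ast})_{1}}|\sum_i f(A_i) g(B_i)|$ immediately.

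The delicate step is the reverse inequality, which requires reducing the sup over all unit-ball functionals to the sup over the restricted set $\Omega$ of evaluation functionals $T \mapsto \psi(Ty)$ with $\|y\|=\|\psi\|=1$. The key observation is that $\|T\| = \sup_{\|y\|=\|\psi\|=1}|\psi(Ty)|$ for every $T \in \mathfrak{B}(E)$, so $\Omega$ is a norming subset of $(\mathfrak{B}^{\ast})_{1}$; this gives $\|\sum_i f(A_i) B_i\| = \sup_{g \in \Omega}|\sum_i f(A_i) g(B_i)|$. Swapping the two suprema and then repeating this norming argument to restrict $f$ to $\Omega$ as well, I obtain $\sup_{f,g \in (\mathfrak{B}^{\ast})_{1}}|\sum_i f(A_i) g(B_i)| = \sup_{f,g \in \Omega}|\sum_i f(A_i) g(B_i)| = d(R_{A,B})$. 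The main obstacle is exactly this reduction to $\Omega$: once the norming property is invoked the Fubini-type swap of suprema is routine, but one must be careful to apply the reduction in both variables separately, since $\Omega$ is not a linear subspace.
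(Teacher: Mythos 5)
Your proposal is correct and follows essentially the same route as the paper: both arguments rest on the rank-one identity $R_{A,B}(x\otimes h)y=\sum_{i}h(B_{i}y)A_{i}x$ together with Hahn--Banach, i.e.\ the fact that the evaluation functionals $T\mapsto h(Ty)$ with $\Vert x\Vert=\Vert h\Vert=1$ norm $\mathfrak{B}(E)$. The paper merely packages the two-sided comparison as a single cyclic chain $d(R_{A,B})\geq k_{1}\geq k_{2}\geq k_{3}\geq d(R_{A,B})$ instead of proving the equality of the three suprema and the two inequalities for $d(R_{A,B})$ separately.
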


\begin{proof}
We denote by $k_{_{1}},\ k_{_{2}}\ $and $k_{_{3}}$ be the supremum cited in
the theorem in the same order. Let $x,y\in \left( E\right) _{_{1}},\ h\in
\left( E^{^{\prime }}\right) _{1}$, and let $f,g\in (\mathfrak{B}%
^{^{,}})_{_{1}}$. So, we have%
\begin{eqnarray*}
d\left( R_{_{A,B}}\right) &\geq &\left\Vert \sum_{i=1}^{n}A_{_{i}}\left(
x\otimes h\right) B_{_{i}}y\right\Vert , \\
&=&\left\Vert \left( \sum_{i=1}^{n}h\left( B_{_{i}}y\right) A_{_{i}}\right)
x\right\Vert .
\end{eqnarray*}

By taking the supremum over $x\in (E)_{_{1}}$, we have, $d\left(
R_{_{A,B}}\right) \geq $ $\left\Vert \sum_{i=1}^{n}h\left( B_{_{i}}y\right)
A_{_{i}}\right\Vert $. Thus,%
\begin{eqnarray*}
d\left( R_{_{A,B}}\right) &\geq &\left\vert \sum_{i=1}^{n}f\left(
A_{_{i}}\right) h\left( B_{_{i}}y\right) \right\vert , \\
&=&\left\vert h\left( \sum_{i=1}^{n}f\left( A_{_{i}}\right) B_{_{i}}y\right)
\right\vert .
\end{eqnarray*}%
By taking the supremum over $h\in \left( E^{^{\prime }}\right) _{1}$ and
over $y\in \left( E\right) _{_{1}}$, we obtain 
\begin{equation*}
d\left( R_{_{A,B}}\right) \geq \left\Vert \sum_{i=1}^{n}f\left(
A_{_{i}}\right) B_{_{i}}\right\Vert .
\end{equation*}%
Then, 
\begin{equation*}
d\left( R_{_{A,B}}\right) \geq \left\vert \sum_{i=1}^{n}f\left(
A_{_{i}}\right) g\left( B_{_{i}}\right) \right\vert .
\end{equation*}%
So, we have $d\left( R_{_{A,B}}\right) \geq $ $k_{_{1}}.$ Since, $%
k_{_{1}}\geq \left\vert g\left( \sum_{i=1}^{n}f\left( B_{_{i}}\right)
A_{_{i}}\right) \right\vert $, then $k_{_{1}}\geq \left\Vert
\sum_{i=1}^{n}f\left( B_{_{i}}\right) A_{_{i}}\right\Vert $. This gives us
that $k_{_{1}},\geq k_{_{2}}$. It is clear that $k_{_{2}}\geq \left\vert
g\left( \sum_{i=1}^{n}f\left( A_{_{i}}\right) B_{_{i}}\right) \right\vert $,
then $k_{_{2}}\geq k_{3}$. Since, $k_{3}\geq \left\vert
\sum_{i=1}^{n}f(A_{_{i}})h\left( B_{_{i}}y\right) \right\vert =\left\vert
f\left( \sum_{i=1}^{n}h\left( B_{_{i}}y\right) A_{_{i}}\right) \right\vert $%
, so we have, 
\begin{eqnarray*}
k_{3} &\geq &\left\Vert \sum_{i=1}^{n}h\left( B_{_{i}}y\right)
A_{_{i}}\right\Vert , \\
&\geq &\left\vert \sum_{i=1}^{n}h\left( B_{_{i}}y\right)
A_{_{i}}x\right\vert , \\
&=&\left\Vert \left( \sum_{i=1}^{n}A_{_{i}}\left( x\otimes h\right)
B_{_{i}}\right) y\right\Vert .
\end{eqnarray*}%
Thus, $k_{3}\geq \left\Vert \sum_{i=1}^{n}A_{_{i}}\left( x\otimes h\right)
B_{_{i}}\right\Vert $. Therefore, $k_{3}\geq d\left( R_{_{A,B}}\right) $.
This complete the proof.
\end{proof}

\begin{proposition}
The map $d(.):\mathcal{R}\left( \mathfrak{B}\right) \rightarrow \mathbb{R},\
R\mapsto d(R)$ is a norm on $\mathcal{R}\left( \mathfrak{B}\right) .$
\end{proposition}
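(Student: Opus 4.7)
The plan is to lean entirely on the previous proposition, which identifies
$$d(R_{_{A,B}})=\underset{f,g\in (\mathfrak{B}^{,})_{_{1}}}{\sup}\Bigl|\sum_{i=1}^{n}f(A_{_{i}})g(B_{_{i}})\Bigr|.$$
This formula expresses $d$ as (a disguised form of) the injective tensor norm on $\mathfrak{B}\otimes\mathfrak{B}$, so all seminorm properties drop out immediately and only the separation axiom requires real work.

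First, I would check that $d$ is a seminorm. Non-negativity is built in, and finiteness $d(R)<\infty$ comes from the original definition $d(R)=\sup_{\|X\|=1=\text{rank}X}\|R(X)\|\leq\|R\|$, since every elementary operator is bounded on $\mathfrak{B}$. Absolute homogeneity $d(\lambda R)=|\lambda|d(R)$ and the triangle inequality $d(R_{_{1}}+R_{_{2}})\leq d(R_{_{1}})+d(R_{_{2}})$ are then routine from the supremum formula above, after observing that the sum $R_{_{1}}+R_{_{2}}$ is represented by concatenating the two defining $n$-tuples.

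The main obstacle, and the only non-trivial step, is definiteness: $d(R)=0\Rightarrow R=0$ in $\mathcal{R}(\mathfrak{B})$. Write $R=R_{_{A,B}}$ with $A=(A_{_{1}},\ldots,A_{_{n}})$, $B=(B_{_{1}},\ldots,B_{_{n}})$. The hypothesis gives $\sum_{i}f(A_{_{i}})g(B_{_{i}})=0$ for all $f,g\in\mathfrak{B}^{,}$. I would exploit this by extracting a maximal linearly independent subfamily $B_{_{i_{1}}},\ldots,B_{_{i_{m}}}$ of $\{B_{_{1}},\ldots,B_{_{n}}\}$, rewriting each remaining $B_{_{j}}$ as a linear combination of the selected ones, and collecting terms so that $\sum_{i}A_{_{i}}\otimes B_{_{i}}=\sum_{k=1}^{m}A_{_{k}}'\otimes B_{_{i_{k}}}$ in the algebraic tensor product. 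By Hahn–Banach there exist $g_{_{1}},\ldots,g_{_{m}}\in\mathfrak{B}^{,}$ with $g_{_{k}}(B_{_{i_{\ell}}})=\delta_{_{k\ell}}$; evaluating the vanishing bilinear form at $(f,g_{_{k}})$ for arbitrary $f$ yields $f(A_{_{k}}')=0$ for all $f$, hence $A_{_{k}}'=0$ for every $k$, and so $\sum_{i}A_{_{i}}\otimes B_{_{i}}=0$ in $\mathfrak{B}\otimes\mathfrak{B}$.

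To conclude, I would invoke the universal property of the algebraic tensor product: the bilinear map $(A,B)\mapsto(X\mapsto AXB)$ factors through a well-defined linear map $\Phi:\mathfrak{B}\otimes\mathfrak{B}\to\mathcal{R}(\mathfrak{B})$, and by construction $R_{_{A,B}}=\Phi\!\left(\sum_{i}A_{_{i}}\otimes B_{_{i}}\right)$. Thus the vanishing of the tensor forces $R_{_{A,B}}=0$, i.e.\ $R(X)=0$ for every $X\in\mathfrak{B}$, not merely on rank one elements. Combined with the seminorm step, this proves $d$ is a norm on $\mathcal{R}(\mathfrak{B})$, and the identification $\Phi$ is in fact an isometric isomorphism $(\mathcal{R}(\mathfrak{B}),d(\cdot))\cong(\mathfrak{B}\otimes\mathfrak{B},\|\cdot\|_{_{\lambda}})$ as advertised in the remarks preceding the proposition.
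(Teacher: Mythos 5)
Your proof is correct and follows essentially the same route as the paper: the seminorm axioms are dismissed as immediate, and for definiteness both arguments pass to a maximal linearly independent subfamily of the $B_i$, apply the preceding proposition to conclude that every continuous functional annihilates the corresponding $A$-side coefficients, and hence that those coefficients vanish. Your explicit dual functionals $g_k$ with $g_k(B_{i_\ell})=\delta_{k\ell}$ and the appeal to the tensor-product universal property are just a slightly more explicit packaging of the paper's direct use of linear independence in the identity $\sum_i f(C_i)B_i=0$.
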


\begin{proof}
It is clear that 
\begin{equation*}
d(R)\geq 0,\ d(\lambda R)=\left\vert \lambda \right\vert d(R),\ d(R+S)\leq
d(R)+d(S),
\end{equation*}%
for every scalar $\lambda $, and for every $R,S\in \mathcal{R}\left( 
\mathfrak{B}\right) $. So, it remains to prove that if $d(R)=0$, then $R=0$,
for every $R\in \mathcal{R}\left( \mathfrak{B}\right) $.

Now, let $A=(A_{_{1}},...,A_{_{n}}),\ B=(B_{_{1}},...,B_{_{n}})$ be two
n-tuples of elements in $\mathfrak{B}$ such that $d\left( R_{_{A,B}}\right)
=0$.

We may assume that $B_{_{1}},...,B_{_{m}}$ (where $m\leq n$) form a maximal
linearly independent subset of $B_{_{1}},...,B_{_{n}}$. There exist $m$
operators $C_{_{1}},...,C_{m}\in sp\left\{ A_{_{1}},...,A_{_{n}}\right\} $
such that $R_{_{A,B}}=R_{_{C},_{D}}$, where $C=\left(
C_{_{1}},...,C_{_{m}}\right) ,\ D=\left( B_{_{1}},...,B_{_{m}}\right) $. So,
using the above proposition, we obtain $\sum_{i=1}^{m}f(C_{_{i}})B_{_{i}}=0$%
, for every $f\in \mathfrak{B}^{^{,}}$. Since $B_{_{1}},...,B_{_{n}}$ are
linearly independent, then $f(C_{_{i}})=0$, for $i=1,...,m$, and for every $%
f\in \mathfrak{B}^{^{,}}$. This proves, $C_{_{i}}=0$, for $i=1,...,m$.
Hence, $R_{_{A,B}}=R_{_{C},_{D}}=0.$
\end{proof}

\begin{corollary}
The two normed spaces $\left( \mathcal{R}\left( \mathfrak{B}\right)
,d(.)\right) $ and $\left( \mathfrak{B}\otimes \mathfrak{B},\left\Vert
.\right\Vert _{_{\lambda }}\right) $ are isometrically isomorph.
\end{corollary}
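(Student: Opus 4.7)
The plan is to exhibit the natural map
\[
\Phi:\mathfrak{B}\otimes\mathfrak{B}\longrightarrow \mathcal{R}(\mathfrak{B}),\qquad \Phi\!\left(\sum_{i=1}^{n}A_{i}\otimes B_{i}\right)=R_{(A_{1},\ldots,A_{n}),(B_{1},\ldots,B_{n})},
\]
and show that it is a linear bijection which is isometric from $\left\Vert \cdot\right\Vert _{\lambda}$ to $d(\cdot)$. Well-definedness on the algebraic tensor product is the first thing I would verify: the assignment $(A,B)\mapsto[X\mapsto AXB]$ is bilinear in $(A,B)$, so by the universal property of the tensor product it factors through $\mathfrak{B}\otimes\mathfrak{B}$, showing that the value of $\Phi$ does not depend on the chosen representation of a tensor as a finite sum $\sum A_{i}\otimes B_{i}$.

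Next I would handle linearity (which is built into the universal property) and surjectivity, which is immediate from the very definition of $\mathcal{R}(\mathfrak{B})$: every elementary operator is, by construction, of the form $R_{A,B}$ for some finite tuples $A,B$, hence lies in the image of $\Phi$.

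The isometry is the heart of the argument, and it is already done: by Proposition~9 we have
\[
d(R_{A,B})=\underset{f,g\in(\mathfrak{B}')_{1}}{\sup}\left\vert\sum_{i=1}^{n}f(A_{i})\,g(B_{i})\right\vert,
\]
while by definition
\[
\left\Vert\sum_{i=1}^{n}A_{i}\otimes B_{i}\right\Vert_{\lambda}=\underset{f,g\in(\mathfrak{B}')_{1}}{\sup}\left\vert\sum_{i=1}^{n}f(A_{i})\,g(B_{i})\right\vert.
\]
These two expressions agree, so $d(\Phi(u))=\Vert u\Vert_{\lambda}$ for every $u\in\mathfrak{B}\otimes\mathfrak{B}$.

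Finally, injectivity of $\Phi$ is a free consequence of the isometric identity: if $\Phi(u)=0$ then $d(\Phi(u))=0$, hence $\Vert u\Vert_{\lambda}=0$, so $u=0$ (the injective norm is a genuine norm on the algebraic tensor product). Thus $\Phi$ is a linear, surjective, isometric map, i.e.\ an isometric isomorphism, which is exactly the assertion. I do not foresee a genuine obstacle: once Proposition~9 is in hand, the only delicate point is checking well-definedness of $\Phi$ on the tensor product (as opposed to on formal sums), and this is taken care of by bilinearity of $(A,B)\mapsto R_{(A),(B)}$.
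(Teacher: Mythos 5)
Your proof is correct and follows essentially the same route as the paper: both exhibit the natural map $\sum A_i\otimes B_i\mapsto R_{A,B}$ and derive the isometry (and hence injectivity) directly from Proposition~9, with surjectivity immediate from the definition of $\mathcal{R}(\mathfrak{B})$. The only cosmetic difference is that you justify well-definedness via the universal property of the tensor product, while the paper extracts it from Proposition~9 together with the fact that $d$ is a norm; both are valid.
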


\begin{proof}
Let the map

\begin{equation*}
\begin{array}{ccc}
\Gamma :\left( \mathfrak{B}\otimes \mathfrak{B},\left\Vert .\right\Vert
_{_{\lambda }}\right) & \rightarrow & \left( \mathcal{R}\left( \mathfrak{B}%
\right) ,d(.)\right) , \\ 
\sum_{i=1}^{n}A_{_{i}}\otimes B_{_{i}} & \longmapsto & R_{_{A,B}},%
\end{array}%
\end{equation*}%
where $A=(A_{_{1}},...,A_{_{n}}),\ B=(B_{_{1}},...,B_{_{n}})\in \mathfrak{B}%
^{n}$.

From $Proposition\ 9$, the map $\Gamma $ is well-defined and injective. It
is clear that $\Gamma $ is linear and surjective. Using again $Proposition\
9 $, we deduce that $\Gamma $ is an isometry.
\end{proof}

\begin{notation}
According to the above identification, and for $R\in \mathcal{R}\left( 
\mathfrak{B}\right) $, we use the notation $\left\Vert R\right\Vert
_{_{\lambda }}$ instead of $d(R)$, and we say it is the injective norm of $R$
\end{notation}

\begin{corollary}
$[15].\ $Let $A=(A_{_{1}},...,A_{_{n}}),\ B=(B_{_{1}},...,B_{_{n}})$ be two
n-tuples of commuting operators in $\mathfrak{B}(H)$. Then $\left\Vert
R_{_{A,B}}\right\Vert _{_{\lambda }}\geq \left\vert \sigma (A)\circ \sigma
(B)\right\vert $; and this inequality becomes an equality, if all $A_{_{i}}$
and $B_{_{i}}$ are normal operators.
\end{corollary}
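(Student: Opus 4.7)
The plan is to extract the statement from the injective-norm formulas already proved in Proposition~9 and combine them with the Gelfand theory for joint spectra. Proposition~9 gives three equivalent expressions for $\|R_{A,B}\|_{\lambda}=d(R_{A,B})$; I shall use them freely.

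For the inequality, I would argue as follows. Fix $\varphi\in\Gamma_{A}$ and $\psi\in\Gamma_{B}$. Each $\varphi$ is a character on the maximal commutative Banach algebra containing $A_{1},\dots,A_{n}$, so it is a unital bounded linear functional of norm $1$ on that subalgebra; by Hahn--Banach it extends to some $\tilde\varphi\in(\mathfrak{B}(H)')_{1}$, and similarly $\psi$ extends to $\tilde\psi\in(\mathfrak{B}(H)')_{1}$. Plugging these pairs into the first expression of Proposition~9 gives
\begin{equation*}
\|R_{A,B}\|_{\lambda}\;\geq\;\Bigl|\sum_{i=1}^{n}\tilde\varphi(A_{i})\tilde\psi(B_{i})\Bigr|\;=\;\Bigl|\sum_{i=1}^{n}\varphi(A_{i})\psi(B_{i})\Bigr|.
\end{equation*}
Taking the supremum over all characters $\varphi,\psi$ yields $\|R_{A,B}\|_{\lambda}\geq|\sigma(A)\circ\sigma(B)|$, which is the asserted inequality.

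For the equality case, assume all $A_{i}$ and all $B_{i}$ are normal. Since $A_{1},\dots,A_{n}$ commute and are normal, they generate a commutative C*-algebra; by Gelfand--Naimark its Gelfand transform is an isometric *-isomorphism onto $C(\sigma(A))$, and in particular for any scalars $\mu_{1},\dots,\mu_{n}$ one has
\begin{equation*}
\Bigl\|\sum_{i=1}^{n}\mu_{i}A_{i}\Bigr\|\;=\;\sup_{\alpha\in\sigma(A)}\Bigl|\sum_{i=1}^{n}\mu_{i}\alpha_{i}\Bigr|.
\end{equation*}
An identical statement holds for the family $B$. Now take the second expression of Proposition~9, namely
\begin{equation*}
\|R_{A,B}\|_{\lambda}\;=\;\sup_{f\in(\mathfrak{B}(H)')_{1}}\Bigl\|\sum_{i=1}^{n}f(B_{i})A_{i}\Bigr\|.
\end{equation*}
Applying the above spectral identity to the family $A$ with scalars $\mu_{i}=f(B_{i})$, interchanging the two suprema, and then applying it to the family $B$ with scalars $\alpha_{i}$ (recognising that $\sup_{f\in(\mathfrak{B}(H)')_{1}}|f(\sum\alpha_{i}B_{i})|=\|\sum\alpha_{i}B_{i}\|$), I obtain
\begin{equation*}
\|R_{A,B}\|_{\lambda}\;=\;\sup_{\alpha\in\sigma(A)}\Bigl\|\sum_{i=1}^{n}\alpha_{i}B_{i}\Bigr\|\;=\;\sup_{\alpha\in\sigma(A)}\sup_{\beta\in\sigma(B)}\Bigl|\sum_{i=1}^{n}\alpha_{i}\beta_{i}\Bigr|\;=\;|\sigma(A)\circ\sigma(B)|.
\end{equation*}
Combined with the general inequality, this yields equality.

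The only delicate point, and therefore the step I expect to require the most care, is the normal--commuting spectral identity $\|\sum\mu_{i}A_{i}\|=\sup_{\alpha\in\sigma(A)}|\sum\mu_{i}\alpha_{i}|$: one must justify that $\sigma(A)$, defined here via characters of a maximal commutative Banach subalgebra, coincides with the joint spectrum coming from the generated C*-algebra, so that the Gelfand transform of $\sum\mu_{i}A_{i}$ is the coordinate polynomial $(\alpha_{i})\mapsto\sum\mu_{i}\alpha_{i}$ on $\sigma(A)$. Once this identification is in place (standard for commuting normals via the continuous functional calculus), the rest of the argument is a clean application of Proposition~9 together with Hahn--Banach.
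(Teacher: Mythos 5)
Your proof is correct and follows essentially the same route as the paper: the lower bound comes from extending the characters by Hahn--Banach and feeding them into the first formula of $Proposition\ 9$, and the upper bound from the second formula of $Proposition\ 9$ together with the fact (via Fuglede) that linear combinations of commuting normal operators are normal, so that their norm is computed by the characters. The paper phrases this last step as norm $=$ spectral radius applied to the normal operators $\sum_{i}\varphi(A_{i})B_{i}$ and $\sum_{i}g(B_{i})A_{i}$ rather than through the Gelfand transform of the generated C*-algebra, but the content is identical.
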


\begin{proof}
Let $(\varphi ,\psi )$ be an arbitrary pair in $\Gamma _{_{A}}\times \Gamma
_{_{B}}$. Using the Hahn-Banach theorem, we may extend $\varphi $ and $\psi $
to unit functional $f$ and $g$ on $\mathfrak{B}(H)$, respectively. So from $%
Proposition\ 9$, it follows that $\left\Vert R_{_{A,B}}\right\Vert
_{_{\lambda }}\geq \left\vert
\sum_{i=1}^{n}f(A_{_{i}})g(B_{_{i}})\right\vert =\left\vert
\sum_{i=1}^{n}\varphi (A_{_{i}})\psi (B_{_{i}})\right\vert $. Therefore $%
\left\Vert R_{_{A,B}}\right\Vert _{_{\lambda }}\geq \left\vert \sigma
(A)\circ \sigma (B)\right\vert $.

Now, suppose all $A_{_{i}}$ and $B_{_{i}}$ are normal operators. It suffice
to prove $\left\Vert R_{_{A,B}}\right\Vert _{_{\lambda }}\leq \left\vert
\sigma (A)\circ \sigma (B)\right\vert $. Let $f,\ g$ be two arbitrary unit
functional on $\mathfrak{B}(H)$, and let $(\varphi ,\psi )$ be an arbitrary
pair in $\Gamma _{_{A}}\times \Gamma _{_{B}}$. Since $\left\vert \sigma
(A)\circ \sigma (B)\right\vert \geq \left\vert \psi \left(
\sum_{i=1}^{n}\varphi (A_{_{i}})B_{_{i}}\right) \right\vert $, and $%
\sum_{i=1}^{n}\varphi (A_{_{i}})B_{_{i}}$ is normal (from Putnam-Fuglede),
then $\left\vert \sigma (A)\circ \sigma (B)\right\vert \geq \left\Vert
\sum_{i=1}^{n}\varphi (A_{_{i}})B_{_{i}}\right\Vert $. So that $\left\vert
\sigma (A)\circ \sigma (B)\right\vert \geq \left\Vert \sum_{i=1}^{n}\varphi
(A_{_{i}})g\left( B_{_{i}}\right) \right\Vert =\left\Vert \varphi \left(
\sum_{i=1}^{n}g\left( B_{_{i}}\right) A_{_{i}}\right) \right\Vert $. Using
the same argument as used with $B_{_{i}}$, we deduce that $\left\vert \sigma
(A)\circ \sigma (B)\right\vert \geq \left\Vert \sum_{i=1}^{n}g\left(
B_{_{i}}\right) A_{_{i}}\right\Vert $. From $Proposition\ 9$, it follows
that $\left\vert \sigma (A)\circ \sigma (B)\right\vert \geq $ $\left\Vert
R_{_{A,B}}\right\Vert _{_{\lambda }}.$
\end{proof}

\begin{lemma}
We have $\left\Vert \psi _{_{S}}\right\Vert _{_{\lambda }}=\left\Vert
\varphi _{_{P}}\right\Vert _{_{\lambda }}$, where $P=\left\vert S\right\vert
.$
\end{lemma}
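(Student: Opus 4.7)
The plan is to combine the polar decomposition of $S$ with the isometric identification of $(\mathcal{R}(\mathfrak{B}(H)),d(\cdot))$ with the injective tensor norm on $\mathfrak{B}(H)\otimes \mathfrak{B}(H)$ established in the corollary just above. Write $S=UP$ with $U$ unitary and $P=|S|$; because $S$ is invertible, both $P$ and $U$ are invertible, and one has $S^{\ast}=PU^{\ast}$ and $S^{-1}=P^{-1}U^{\ast}$.

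Under the isomorphism $\Gamma$, the elementary operator $\psi_{_{S}}$ corresponds to the tensor $S^{\ast}\otimes S^{-1}+S^{-1}\otimes S^{\ast}$ and $\varphi_{_{P}}$ to $P\otimes P^{-1}+P^{-1}\otimes P$. Substituting the polar factors and using the multiplication rule $(A\otimes B)(C\otimes D)=AC\otimes BD$ on $\mathfrak{B}(H)\otimes \mathfrak{B}(H)$, a direct computation gives
\[
S^{\ast}\otimes S^{-1}+S^{-1}\otimes S^{\ast}=\bigl(P\otimes P^{-1}+P^{-1}\otimes P\bigr)\bigl(U^{\ast}\otimes U^{\ast}\bigr).
\]

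The only nontrivial point, and the main obstacle of the proof, is to verify that right multiplication by $U^{\ast}\otimes U^{\ast}$ is a $\|\cdot\|_{_{\lambda}}$-isometry of $\mathfrak{B}(H)\otimes \mathfrak{B}(H)$. I would establish this via Proposition 9: for every $t\in \mathfrak{B}(H)\otimes \mathfrak{B}(H)$ one has $\|t\|_{_{\lambda}}=\sup_{f,g\in (\mathfrak{B}^{^{\prime}})_{_{1}}}|(f\otimes g)(t)|$, and a short calculation yields $(f\otimes g)(t(U^{\ast}\otimes U^{\ast}))=(f^{\prime}\otimes g^{\prime})(t)$, where $f^{\prime}(A)=f(AU^{\ast})$ and $g^{\prime}(B)=g(BU^{\ast})$. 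Since $U^{\ast}$ is unitary, right multiplication by $U^{\ast}$ is an isometric bijection of $\mathfrak{B}(H)$, so $f\mapsto f^{\prime}$ and $g\mapsto g^{\prime}$ are norm-preserving bijections of the unit ball $(\mathfrak{B}^{^{\prime}})_{_{1}}$. Taking suprema gives $\|t(U^{\ast}\otimes U^{\ast})\|_{_{\lambda}}=\|t\|_{_{\lambda}}$. Specializing to $t=P\otimes P^{-1}+P^{-1}\otimes P$ yields $\|\psi_{_{S}}\|_{_{\lambda}}=\|\varphi_{_{P}}\|_{_{\lambda}}$, completing the argument.
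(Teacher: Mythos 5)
Your proof is correct. It rests on the same two pillars as the paper's argument: the polar decomposition $S=UP$ (so that $S^{\ast }=PU^{\ast }$ and $S^{-1}=P^{-1}U^{\ast }$) and unitary invariance of the relevant norm; the difference lies in which of the equivalent formulas of $Proposition\ 9$ is used to absorb the unitary. The paper works with $d(\cdot )$, the supremum over norm-one rank-one $X$: it writes $\psi _{_{S}}(X)=\bigl( P(U^{\ast }X)P^{-1}+P^{-1}(U^{\ast }X)P\bigr) U^{\ast }$, drops the trailing $U^{\ast }$ by unitary invariance of the operator norm, and notes that $X\mapsto U^{\ast }X$ permutes the norm-one rank-one operators, so the supremum is unchanged. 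You instead pass to the tensor picture, factor the tensor of $\psi _{_{S}}$ as that of $\varphi _{_{P}}$ times $U^{\ast }\otimes U^{\ast }$, and prove via the functional formula that right multiplication by $U^{\ast }\otimes U^{\ast }$ is a $\left\Vert \cdot \right\Vert _{_{\lambda }}$-isometry. Your route costs one extra (easy) lemma but yields a reusable general fact, namely $\left\Vert t(V\otimes W)\right\Vert _{_{\lambda }}=\left\Vert t\right\Vert _{_{\lambda }}$ for unitaries $V,W$, whereas the paper's change of variables is shorter and stays entirely at the level of rank-one test operators. One cosmetic caveat: the product rule $(A\otimes B)(C\otimes D)=AC\otimes BD$ is the tensor-algebra product, which under the identification $\Gamma $ does \emph{not} correspond to composition of elementary operators; you only use it as bookkeeping for the identity of tensors $S^{\ast }\otimes S^{-1}+S^{-1}\otimes S^{\ast }=PU^{\ast }\otimes P^{-1}U^{\ast }+P^{-1}U^{\ast }\otimes PU^{\ast }$, which is all the argument needs, so no harm is done, but it would be cleaner to state that identity directly rather than suggest $\Gamma $ is an algebra isomorphism.
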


\begin{proof}
Let $S=UP$, be the polar decomposition of $S$.\ From the fact that 
\begin{equation*}
\left\{ X\in \mathfrak{B}(H):\left\Vert X\right\Vert =1=rankX\right\}
=\left\{ U^{\ast }X:X\in \mathfrak{B}(H),\ \left\Vert X\right\Vert
=1=rankX\right\} ,
\end{equation*}%
it follows that%
\begin{eqnarray*}
\left\Vert \psi _{_{S}}\right\Vert _{_{\lambda }} &=&\underset{\left\Vert
X\right\Vert =1=rankX}{\sup }\left\Vert S^{^{\ast }}XS^{-1}+S^{-1}XS^{^{\ast
}}\right\Vert \\
&=&\underset{\left\Vert X\right\Vert =1=rankX}{\sup }\left\Vert PU^{^{\ast
}}XP^{-1}U^{^{\ast }}+P^{-1}U^{^{\ast }}XPU^{^{\ast }}\right\Vert \\
&=&\underset{\left\Vert X\right\Vert =1=rankX}{\sup }\left\Vert P\left(
U^{^{\ast }}X\right) P^{-1}+P^{-1}\left( U^{^{\ast }}X\right) P\right\Vert \\
&=&\left\Vert \varphi _{_{P}}\right\Vert _{_{\lambda }}.
\end{eqnarray*}
\end{proof}

\begin{proposition}
$[15].\ $The following properties hold

\begin{equation*}
(i).\left\Vert \varphi _{_{S}}\right\Vert _{_{\lambda }}\geq \underset{%
\lambda ,\mu \in \sigma (S)}{\sup }\left\vert \frac{\lambda }{\mu }+\frac{%
\mu }{\lambda }\right\vert ,
\end{equation*}

$(ii).\ $if $S$ is normal, the above inequality becomes an equality,

$(iii).\ $if $S$ is normal,, the following holds%
\begin{equation*}
\left\Vert \psi _{_{S}}\right\Vert _{_{\lambda }}=\underset{\lambda ,\mu \in
\sigma (S)}{\sup }\left( \left\vert \frac{\lambda }{\mu }\right\vert
+\left\vert \frac{\mu }{\lambda }\right\vert \right) .
\end{equation*}
\end{proposition}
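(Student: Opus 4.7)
The plan is to apply the preceding corollary on the injective norm of elementary operators induced by tuples of commuting (normal) operators, since $\varphi_{_{S}}$ and $\psi_{_{S}}$ are precisely such operators for appropriate choices of tuples.

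For part $(i)$, I would write $\varphi_{_{S}}=R_{_{A,B}}$ with the 2-tuples $A=(S,S^{-1})$ and $B=(S^{-1},S)$. Each of these tuples consists of two commuting operators (they lie in the commutative subalgebra generated by $S$), so the previous corollary applies and gives
\[
\left\Vert \varphi_{_{S}}\right\Vert _{_{\lambda }}=\left\Vert R_{_{A,B}}\right\Vert _{_{\lambda }}\geq \left\vert \sigma(A)\circ \sigma(B)\right\vert .
\]
By the joint spectral mapping theorem applied to the rational functions $z\mapsto z$ and $z\mapsto z^{-1}$, one has $\sigma(A)=\{(\lambda ,\lambda ^{-1}):\lambda \in \sigma(S)\}$ and $\sigma(B)=\{(\mu ^{-1},\mu ):\mu \in \sigma(S)\}$. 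Computing the circle product termwise gives $\sigma(A)\circ \sigma(B)=\{\frac{\lambda }{\mu }+\frac{\mu }{\lambda }:\lambda ,\mu \in \sigma(S)\}$, whose sup-modulus is exactly the right-hand side.

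For part $(ii)$, when $S$ is normal, so is $S^{-1}$; hence every coordinate of $A$ and $B$ is normal. The second half of the previous corollary then upgrades the inequality to equality, giving $\left\Vert \varphi_{_{S}}\right\Vert _{_{\lambda }}=\sup _{\lambda ,\mu \in \sigma(S)}\left\vert \frac{\lambda }{\mu }+\frac{\mu }{\lambda }\right\vert $.

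For part $(iii)$, I would invoke the lemma already proved above, which identifies $\left\Vert \psi_{_{S}}\right\Vert _{_{\lambda }}=\left\Vert \varphi_{_{P}}\right\Vert _{_{\lambda }}$ where $P=\left\vert S\right\vert $. Since $P$ is positive and in particular normal, part $(ii)$ applied to $P$ yields $\left\Vert \varphi_{_{P}}\right\Vert _{_{\lambda }}=\sup _{a,b\in \sigma(P)}\left\vert \frac{a}{b}+\frac{b}{a}\right\vert $. The spectral theorem for the normal operator $S$ gives $\sigma(P)=\sigma(\left\vert S\right\vert )=\{\left\vert \lambda \right\vert :\lambda \in \sigma(S)\}$, and since $\sigma(P)\subset (0,\infty )$ the quantity $\frac{a}{b}+\frac{b}{a}$ is already positive, so the absolute value can be dropped. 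Rewriting $\frac{\left\vert \lambda \right\vert }{\left\vert \mu \right\vert }=\left\vert \frac{\lambda }{\mu }\right\vert $ produces the claimed formula.

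The only non-routine point is the joint spectral computation in part $(i)$, i.e.\ the identification of $\sigma(A)$ and $\sigma(B)$ as graphs of $z\mapsto z^{\pm 1}$ over $\sigma(S)$. This reduces to the observation that every multiplicative functional on the (maximal commutative) subalgebra containing $S$ and $S^{-1}$ is determined by its value on $S$, and must send $S^{-1}$ to the reciprocal of that value; the rest of the argument is then a direct assembly of the previous corollary and the lemma $\left\Vert \psi_{_{S}}\right\Vert _{_{\lambda }}=\left\Vert \varphi_{_{\left\vert S\right\vert }}\right\Vert _{_{\lambda }}$.
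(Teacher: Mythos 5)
Your proof is correct and follows essentially the same route as the paper: parts $(i)$ and $(ii)$ are exactly the application of the corollary on $\left\Vert R_{_{A,B}}\right\Vert _{_{\lambda }}\geq \left\vert \sigma (A)\circ \sigma (B)\right\vert$ (with equality in the normal case) to the tuples $A=(S,S^{-1})$, $B=(S^{-1},S)$, and part $(iii)$ combines the lemma $\left\Vert \psi _{_{S}}\right\Vert _{_{\lambda }}=\left\Vert \varphi _{_{\left\vert S\right\vert }}\right\Vert _{_{\lambda }}$ with $(ii)$ applied to $\left\vert S\right\vert$, just as the paper does. The only difference is that you spell out the joint-spectrum identification that the paper leaves implicit under ``follows immediately from the corollary,'' and that detail is handled correctly.
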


\begin{proof}
$(i)$ and $(ii)$ follows immediately from $Corollary$ $7$.

$(iii).$ Assume $S$ normal, and let $UP$ be its polar decomposition.

Since $S$ is invertible and normal , then $\sigma (P)=\left\{ \left\vert
\lambda \right\vert :\lambda \in \sigma (S)\right\} $. So from the above
lemma and $(ii)$, we obtain $\left\Vert \psi _{_{S}}\right\Vert _{_{\lambda
}}=\left\Vert \varphi _{_{P}}\right\Vert _{_{\lambda }}=$ $\underset{\lambda
,\mu \in \sigma (S)}{\sup }\left( \left\vert \frac{\lambda }{\mu }%
\right\vert +\left\vert \frac{\mu }{\lambda }\right\vert \right) .$
\end{proof}

\begin{corollary}
$(i).$ We have $\left\Vert \varphi _{_{S}}\right\Vert _{_{\lambda }}\geq 2,$

$(ii).$ if $S$ is normal, then the injective norm of $\varphi _{_{S}}$ gets
its minimal value $2$, if and only if the following spectral condition holds 
\begin{equation*}
\forall \in \lambda ,\mu \in \sigma (S),\ \left\vert \frac{\lambda }{\mu }+%
\frac{\mu }{\lambda }\right\vert \leq 2.
\end{equation*}

$(iii).$if $\left\Vert \varphi _{_{S}}\right\Vert _{_{\lambda }}=2$, then
the interior of $\sigma (S)$ is empty.
\end{corollary}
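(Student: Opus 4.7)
The plan is to derive all three parts from Proposition 10 and the nonemptiness of $\sigma(S)$. Since $S$ is invertible, $\sigma(S)$ is a nonempty subset of $\mathbb{C}\setminus\{0\}$. For part $(i)$, pick any $\lambda\in\sigma(S)$ and set $\mu=\lambda$ in the spectral supremum; then $|\lambda/\mu+\mu/\lambda|=2$, so the bound of Proposition 10$(i)$ yields $\|\varphi_{S}\|_{\lambda}\geq 2$ immediately.

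Part $(ii)$ is then immediate: if $S$ is normal, Proposition 10$(ii)$ upgrades the inequality to the equality $\|\varphi_{S}\|_{\lambda}=\sup_{\lambda,\mu\in\sigma(S)}|\lambda/\mu+\mu/\lambda|$. Combined with part $(i)$ (which already forces this supremum to be $\geq 2$), equality with $2$ holds if and only if every term of the supremum is $\leq 2$, which is the stated spectral condition.

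Part $(iii)$ is the only substantive step. Assuming $\|\varphi_{S}\|_{\lambda}=2$, Proposition 10$(i)$ forces the spectral condition $|\lambda/\mu+\mu/\lambda|\leq 2$ for all $\lambda,\mu\in\sigma(S)$ (no normality assumed here). Suppose for contradiction that $\sigma(S)$ has an interior point $\lambda_{0}$ (necessarily nonzero), so $B(\lambda_{0},r)\subset\sigma(S)$ for some $r>0$. For any complex $t$ with $|t|<r/|\lambda_{0}|$, both $\mu:=\lambda_{0}$ and $\lambda:=(1+t)\lambda_{0}$ lie in $\sigma(S)$, and a direct manipulation gives
\[
\frac{\lambda}{\mu}+\frac{\mu}{\lambda}=(1+t)+\frac{1}{1+t}=2+\frac{t^{2}}{1+t}.
\]
Choosing $t$ small and real positive makes $t^{2}/(1+t)$ a positive real, so $|2+t^{2}/(1+t)|=2+t^{2}/(1+t)>2$, contradicting the spectral condition. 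Hence $\sigma(S)$ has empty interior.

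The only delicate point, and the main obstacle, is the perturbation argument in $(iii)$: one must confirm that the admissible disk around $\lambda_{0}$ is genuinely big enough to house a perturbation for which $\operatorname{Re}(t^{2}/(1+t))>0$. This is trivial once one takes $t$ real positive, but it is what makes the chain $\|\varphi_{S}\|_{\lambda}=2\Rightarrow$ spectral condition $\Rightarrow\operatorname{int}\sigma(S)=\varnothing$ go through. Parts $(i)$ and $(ii)$ are purely formal consequences of Proposition 10.
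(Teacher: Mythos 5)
Your proof is correct and follows essentially the same route as the paper: parts $(i)$ and $(ii)$ are read off from the lower bound $\left\Vert \varphi _{_{S}}\right\Vert _{_{\lambda }}\geq \underset{\lambda ,\mu \in \sigma (S)}{\sup }\left\vert \frac{\lambda }{\mu }+\frac{\mu }{\lambda }\right\vert $ together with the choice $\mu =\lambda $, and its equality case for normal $S$. For $(iii)$ the paper argues in one line that the spectral condition forces every straight line through the origin to meet $\sigma (S)$ in at most two points; your explicit perturbation $\lambda =(1+t)\lambda _{0}$, $t>0$ small and real, is just a concrete instantiation of that same observation.
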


\begin{proof}
$(i)$ and $(ii)$ follows immediately from the above proposition.

$(iii)$ Assume $\left\Vert \varphi _{_{S}}\right\Vert _{_{\lambda }}=2$.
Thus, $\left\vert \frac{\lambda }{\mu }+\frac{\mu }{\lambda }\right\vert
\leq 2$, for every $\lambda ,\mu \in \sigma (S)$. Hence, every straight line
passing through the origin intercept $\sigma (S)$ in at most two points.
This proves that the interior of $\sigma (S)$ is empty.
\end{proof}

\begin{proposition}
$[15].\ $Let $P$ be a positive and invertible operator in $\mathfrak{B}(H)$.
Then we have, 
\begin{equation*}
\left\Vert \varphi _{_{P}}\right\Vert _{_{\lambda }}=\left\Vert P\right\Vert
\left\Vert P^{-1}\right\Vert +\frac{1}{\left\Vert P\right\Vert \left\Vert
P^{-1}\right\Vert }.
\end{equation*}
\end{proposition}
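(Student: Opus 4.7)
The plan is to reduce the computation of $\|\varphi_P\|_\lambda$ to a purely scalar optimization problem via Proposition $10$, and then to exploit the positivity of $P$ to evaluate that scalar supremum explicitly.

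First, since $P$ is positive and invertible, it is in particular a normal invertible operator in $\mathfrak{B}(H)$. Therefore Proposition $10(ii)$ applies and yields
\begin{equation*}
\|\varphi_P\|_\lambda = \sup_{\lambda,\mu\in\sigma(P)}\left|\frac{\lambda}{\mu}+\frac{\mu}{\lambda}\right|.
\end{equation*}

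Next, I would use that $\sigma(P)\subset(0,\infty)$, so the absolute value can be dropped and the supremum becomes $\sup_{\lambda,\mu\in\sigma(P)}\bigl(\tfrac{\lambda}{\mu}+\tfrac{\mu}{\lambda}\bigr)$. Set $m=\min\sigma(P)$ and $M=\max\sigma(P)$; since $P\geq 0$ is invertible and $\sigma(P)$ is compact, both $m$ and $M$ are attained in $\sigma(P)$, and standard identities give $M=\|P\|$ together with $m=1/\|P^{-1}\|$. Thus the ratio $\lambda/\mu$ ranges over a subset of $[m/M,M/m]$ which contains both endpoints (take $(\lambda,\mu)=(m,M)$ or $(M,m)$).

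The scalar function $g(t)=t+1/t$ on $(0,\infty)$ is strictly decreasing on $(0,1]$, strictly increasing on $[1,\infty)$, and satisfies $g(t)=g(1/t)$. Hence on any interval of the form $[m/M,M/m]$ (with $m\leq M$) its maximum is attained at the endpoints, with common value $M/m+m/M$. Since this value is realized by an admissible pair in $\sigma(P)\times\sigma(P)$, I conclude
\begin{equation*}
\|\varphi_P\|_\lambda=\frac{M}{m}+\frac{m}{M}=\|P\|\|P^{-1}\|+\frac{1}{\|P\|\|P^{-1}\|}.
\end{equation*}

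There is no serious obstacle: the nontrivial inputs (the spectral identity for $\|\varphi_P\|_\lambda$ when $P$ is normal, and the attainment of $\min\sigma(P)$ and $\max\sigma(P)$ for a positive invertible operator) are already available in the paper or standard. The only point deserving a brief argument is the elementary monotonicity of $g(t)=t+1/t$ ensuring the supremum is realized precisely at the extreme pair $(M,m)$.
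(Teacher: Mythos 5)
Your proof is correct and takes essentially the same route as the paper: reduce to the spectral formula $\left\Vert \varphi _{_{P}}\right\Vert _{_{\lambda }}=\underset{\lambda ,\mu \in \sigma (P)}{\sup }\left\vert \frac{\lambda }{\mu }+\frac{\mu }{\lambda }\right\vert $ valid for normal operators (note this is $Proposition\ 11.(ii)$, not $Proposition\ 10$), then maximize $t+\frac{1}{t}$ over the interval $[\frac{1}{\left\Vert P\right\Vert \left\Vert P^{-1}\right\Vert },\left\Vert P\right\Vert \left\Vert P^{-1}\right\Vert ]$ at its endpoints, which are attained in the set of spectral ratios. The paper packages the same computation through the operator $M_{_{P}}(X)=PXP^{-1}$ and its spectrum $\sigma (P)\sigma (P^{-1})$, but the argument is identical in substance.
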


\begin{proof}
Let the operator $M_{_{P}}$ defined on $\mathfrak{B}(H)$ by 
\begin{equation*}
\forall X\in \mathfrak{B}(H),\ M_{_{P}}(X)=PXP^{-1}.\ 
\end{equation*}

Since $\sigma (M_{_{P}})=\sigma (P)\sigma (P^{-1})$, $\sigma (\varphi
_{_{P}})=\left\{ f(M_{_{P}})+\frac{1}{f(M_{_{P}})}:f\in \Gamma \right\} $
(where $\Gamma $ is the set of all multiplicative functional on the maximal
commutative Banach algebra in $\mathfrak{B}\left( \mathfrak{B}(H)\right) $
that contains $M_{_{P}}$), and from the above proposition, it is easy to see
that, $\left\Vert \varphi _{_{P}}\right\Vert _{_{\lambda }}=\underset{%
\lambda ,\mu \in \sigma (P)}{\sup }\left\vert \frac{\lambda }{\mu }+\frac{%
\mu }{\lambda }\right\vert =\underset{z\in \sigma (M_{_{P}})}{\sup }%
\left\vert z+\frac{1}{z}\right\vert $. So, using the fact that $\min \sigma
(P)=\frac{1}{\left\Vert P^{-1}\right\Vert }$ and $\max \sigma (P)=\left\Vert
P\right\Vert $, then $\min \sigma (M_{_{P}})=\frac{1}{\left\Vert
P\right\Vert \left\Vert P^{-1}\right\Vert }=p$, and $\max \sigma
(M_{_{P}})=\left\Vert P\right\Vert \left\Vert P^{-1}\right\Vert =\frac{1}{p}$%
.\ On the other hand, since $\underset{p\leq t\leq \frac{1}{p}}{\max }\left(
t+\frac{1}{t}\right) =p+\frac{1}{p}$, this maximum is attainable at $p$ and $%
\frac{1}{p}$. Thus, the result follows immediately from the fact that $p\in
\sigma (M_{_{P}})$.
\end{proof}

\begin{proposition}
$[15]\ $The following properties hold:

$(i)\ \left\Vert \psi _{_{S}}\right\Vert _{_{\lambda }}=\left\Vert
S\right\Vert \left\Vert S^{^{-1}}\right\Vert +\frac{1}{\left\Vert
S\right\Vert \left\Vert S^{^{-1}}\right\Vert },$

$(ii)\ $If $S$ is selfadjoint, then $\left\Vert \varphi _{_{S}}\right\Vert
_{_{\lambda }}=\left\Vert S\right\Vert \left\Vert S^{^{-1}}\right\Vert +%
\frac{1}{\left\Vert S\right\Vert \left\Vert S^{^{-1}}\right\Vert },$

$(iii)\ $if $S$ normal, then $\left\Vert \varphi _{_{S}}\right\Vert
_{_{\lambda }}\leq \left\Vert S\right\Vert \left\Vert S^{^{-1}}\right\Vert +%
\frac{1}{\left\Vert S\right\Vert \left\Vert S^{^{-1}}\right\Vert }.$
\end{proposition}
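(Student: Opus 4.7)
The proof will reduce each of the three statements to objects already computed. For (i), the plan is to use the preceding lemma, which says $\|\psi_S\|_\lambda = \|\varphi_P\|_\lambda$ with $P=|S|$, and then apply the preceding proposition giving $\|\varphi_P\|_\lambda = \|P\|\|P^{-1}\| + \frac{1}{\|P\|\|P^{-1}\|}$. It only remains to replace $\|P\|,\|P^{-1}\|$ by $\|S\|,\|S^{-1}\|$. The first identity $\|P\|=\|S\|$ is the standard $C^*$-identity $\|S\|^2=\|S^*S\|=\|P^2\|=\|P\|^2$. The second, $\|P^{-1}\|=\|S^{-1}\|$, uses the polar decomposition $S=UP$ with $U$ unitary (since $S$ is invertible) to write $S^{-1}=P^{-1}U^*$ and conclude $\|S^{-1}\|=\|P^{-1}U^*\|=\|P^{-1}\|$.

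For (ii), the observation is that selfadjointness of $S$ makes $\psi_S$ and $\varphi_S$ literally the same operator on $\mathfrak{B}(H)$: from $S^*=S$ one has $\psi_S(X)=S^*XS^{-1}+S^{-1}XS^*=SXS^{-1}+S^{-1}XS=\varphi_S(X)$ for every $X$, so $\|\varphi_S\|_\lambda=\|\psi_S\|_\lambda$ and (ii) is immediate from (i).

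For (iii), the plan is to combine the spectral description $\|\varphi_S\|_\lambda = \sup_{\lambda,\mu \in \sigma(S)}\left|\frac{\lambda}{\mu}+\frac{\mu}{\lambda}\right|$ from Proposition~9(ii) (valid under normality) with an elementary estimate. The triangle inequality gives
\begin{equation*}
\left|\frac{\lambda}{\mu}+\frac{\mu}{\lambda}\right| \leq \frac{|\lambda|}{|\mu|}+\frac{|\mu|}{|\lambda|},
\end{equation*}
and, setting $a=\inf\{|z|:z\in\sigma(S)\}$ and $b=\sup\{|z|:z\in\sigma(S)\}$, the function $t\mapsto t+1/t$ on $[a/b,b/a]$ attains its maximum at the endpoints, yielding $\frac{b}{a}+\frac{a}{b}$. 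Since $S$ is normal and invertible, $\|S\|=b$ and $\|S^{-1}\|=1/a$, hence $\|S\|\|S^{-1}\|=b/a$, and the claimed upper bound follows.

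The plan involves no real obstacle: (i) and (ii) are mechanical consequences of what has been proved, and (iii) is a one-line spectral estimate. The only technical point worth writing carefully is the verification that $\|P^{-1}\|=\|S^{-1}\|$ in (i), since the identity $\|P\|=\|S\|$ might tempt one to forget that the inverse requires the polar decomposition; apart from that, nothing new is needed beyond Proposition~9, the preceding lemma, and the preceding proposition.
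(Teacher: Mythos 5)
Your proposal is correct and follows essentially the same route as the paper: part (i) via the lemma $\left\Vert \psi_{S}\right\Vert_{\lambda}=\left\Vert \varphi_{P}\right\Vert_{\lambda}$ together with the formula for $\left\Vert \varphi_{P}\right\Vert_{\lambda}$ (your explicit check that $\left\Vert P^{-1}\right\Vert=\left\Vert S^{-1}\right\Vert$ is a welcome detail the paper leaves implicit), part (ii) from $\psi_{S}=\varphi_{S}$, and part (iii) from the spectral formula for normal operators plus the triangle inequality. The only cosmetic difference is in (iii): the paper bounds $\sup\left(\left\vert \lambda/\mu\right\vert+\left\vert \mu/\lambda\right\vert\right)$ by recognizing it as $\left\Vert \psi_{S}\right\Vert_{\lambda}$ and invoking (i), whereas you evaluate it directly via the convexity of $t+1/t$ — the same underlying computation.
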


\begin{proof}
Let $S=UP$ be the polar decomposition of $S$.

$(i).$ From $Lemma\ 7$ and $Proposition\ 12$\ and since, $\left\Vert
S\right\Vert =\left\Vert P\right\Vert ,\ \left\Vert S^{^{-1}}\right\Vert
=\left\Vert P^{^{-1}}\right\Vert $, it follows that:%
\begin{equation*}
\left\Vert \psi _{_{S}}\right\Vert _{_{\lambda }}=\left\Vert \varphi
_{_{P}}\right\Vert _{_{\lambda }}=\left\Vert P\right\Vert \left\Vert
P^{^{-1}}\right\Vert +\frac{1}{\left\Vert P\right\Vert \left\Vert
P^{^{-1}}\right\Vert }=\left\Vert S\right\Vert \left\Vert
S^{^{-1}}\right\Vert +\frac{1}{\left\Vert S\right\Vert \left\Vert
S^{^{-1}}\right\Vert }.
\end{equation*}

$(ii).$ This implication follows immediately from $(i)$.

$(iii)$. Assume $S$ normal.\ Then, using $Proposition\ 11$, and $(i)$, we
obtain 
\begin{eqnarray*}
\left\Vert \varphi _{_{S}}\right\Vert _{_{\lambda }} &=&\underset{\lambda
,\mu \in \sigma (S)}{\sup }\left\vert \frac{\lambda }{\mu }+\frac{\mu }{%
\lambda }\right\vert \\
&\leq &\underset{\lambda ,\mu \in \sigma (S)}{\sup }\left( \left\vert \frac{%
\lambda }{\mu }\right\vert +\left\vert \frac{\mu }{\lambda }\right\vert
\right) \\
&=&\left\Vert \psi _{_{S}}\right\Vert _{_{\lambda }} \\
&=&\left\Vert S\right\Vert \left\Vert S^{^{-1}}\right\Vert +\frac{1}{%
\left\Vert S\right\Vert \left\Vert S^{^{-1}}\right\Vert }.\ 
\end{eqnarray*}
\end{proof}

\begin{remark}
In the condition $(iii)$ of this last proposition, the inequality may be
strict. Indeed, in dimension two, we choose the invertible normal operator $%
S=\left[ 
\begin{array}{cc}
1 & 0 \\ 
0 & \frac{1+i}{2}%
\end{array}%
\right] $. By a simple computation, we find that $2=\left\Vert \varphi
_{_{S}}\right\Vert _{_{\lambda }}<\left\Vert S\right\Vert \left\Vert
S^{^{-1}}\right\Vert +\frac{1}{\left\Vert S\right\Vert \left\Vert
S^{^{-1}}\right\Vert }=\frac{3\sqrt{2}}{2}.$
\end{remark}

\begin{notation}
We denote by 
\begin{equation*}
\mathcal{E}(H)\ \mathcal{=}\left\{ T\in \mathfrak{B}(H):T\ \text{normal and
invertible, }\left\Vert \varphi _{_{T}}\right\Vert _{_{\lambda }}=\left\Vert
T\right\Vert \left\Vert T^{^{-1}}\right\Vert +\frac{1}{\left\Vert
T\right\Vert \left\Vert T^{^{-1}}\right\Vert }\right\} .
\end{equation*}
\end{notation}

From above, $\mathcal{E}(H)$ contains every invertible selfadjoint (resp.
every unitary) operators in $\mathfrak{B}(H)$, but $\mathcal{E}(H)$ does not
contain every invertible normal operators in $\mathfrak{B}(H)\ $(see the
example in the above remark). In the next proposition, we give a
characterization of this class $\mathcal{E}(H)$, where we use the following
notations:

\begin{enumerate}
\item[$\bullet $] $\ \sigma _{_{1}}(S)=\left\{ \lambda \in \sigma
(S):\left\vert \lambda \right\vert =\underset{\mu \in \sigma (S)}{\min }%
\left\vert \mu \right\vert \right\} ,\ $

\item[$\bullet $] $\ \sigma _{_{2}}(S)=\left\{ \lambda \in \sigma
(S):\left\vert \lambda \right\vert =r(S)\right\} ,$

\item[$\bullet $] $D_{_{\theta }}$ (where $\theta \in \lbrack 0,\pi )$) is
the straight line through the origin with slope $\tan \theta $.
\end{enumerate}

\begin{proposition}
$[15]\ $The two following properties are equivalent:

$(i)$ $S\in \mathcal{E}(H)$,

$(ii)$ $S$ is normal, and there exists $\theta \in \lbrack 0,\pi \lbrack $
such that 
\begin{equation*}
D_{_{\theta }}\cap \sigma _{_{1}}(S)\neq \emptyset ,\ D_{_{\theta }}\cap
\sigma _{2}(S)\neq \emptyset .
\end{equation*}
\end{proposition}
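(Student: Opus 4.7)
\bigskip

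\noindent\textbf{Proof proposal.} The plan is to reduce everything to the spectral formula for normal $S$ given by $Proposition\ 11(ii)$, namely $\left\Vert \varphi_{_S}\right\Vert_{_\lambda}=\sup_{\lambda,\mu\in\sigma(S)}\left\vert\tfrac{\lambda}{\mu}+\tfrac{\mu}{\lambda}\right\vert$, and to the fact that $\left\Vert S\right\Vert\left\Vert S^{-1}\right\Vert=R/r$ where $R=r(S)=\max\{\left\vert\mu\right\vert:\mu\in\sigma(S)\}$ and $r=\min\{\left\vert\mu\right\vert:\mu\in\sigma(S)\}$ (here $r>0$ since $S$ is invertible). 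With these, the target quantity $\left\Vert S\right\Vert\left\Vert S^{-1}\right\Vert+\tfrac{1}{\left\Vert S\right\Vert\left\Vert S^{-1}\right\Vert}$ becomes exactly $R/r+r/R$, and the problem is purely a geometric extremal question about $\sigma(S)$.

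The central computation I would do first is the following: writing $\lambda=\left\vert\lambda\right\vert e^{i\alpha}$ and $\mu=\left\vert\mu\right\vert e^{i\beta}$ and setting $a=\tfrac{\left\vert\lambda\right\vert}{\left\vert\mu\right\vert}+\tfrac{\left\vert\mu\right\vert}{\left\vert\lambda\right\vert}\geq 2$, a direct expansion gives
\begin{equation*}
\left\vert\tfrac{\lambda}{\mu}+\tfrac{\mu}{\lambda}\right\vert^{2}=a^{2}\cos^{2}(\alpha-\beta)+(a^{2}-4)\sin^{2}(\alpha-\beta)=a^{2}-4\sin^{2}(\alpha-\beta).
\end{equation*}
Thus $\left\vert\tfrac{\lambda}{\mu}+\tfrac{\mu}{\lambda}\right\vert\leq a\leq R/r+r/R$, with the first inequality an equality iff $\alpha\equiv\beta\pmod{\pi}$ (that is, $\lambda,\mu$ lie on a common line $D_{_\theta}$ through the origin) and the second iff $\{\left\vert\lambda\right\vert,\left\vert\mu\right\vert\}=\{r,R\}$, that is, one of $\lambda,\mu$ belongs to $\sigma_{_1}(S)$ and the other to $\sigma_{_2}(S)$. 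The compactness of $\sigma(S)$ and invertibility of $S$ guarantee the continuous map $(\lambda,\mu)\mapsto\left\vert\tfrac{\lambda}{\mu}+\tfrac{\mu}{\lambda}\right\vert$ attains its supremum on $\sigma(S)\times\sigma(S)$.

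For the implication $(i)\Rightarrow(ii)$, I would suppose $S\in\mathcal{E}(H)$, so $S$ is normal (by definition of $\mathcal{E}(H)$) and the attained maximum of $\left\vert\tfrac{\lambda}{\mu}+\tfrac{\mu}{\lambda}\right\vert$ equals $R/r+r/R$; by the strict character of both inequalities above unless their equality conditions both hold, there exist $\lambda_{_0}\in\sigma_{_2}(S)$ and $\mu_{_0}\in\sigma_{_1}(S)$ with $\arg\lambda_{_0}\equiv\arg\mu_{_0}\pmod{\pi}$, which gives the required $\theta\in[0,\pi)$ with $D_{_\theta}\cap\sigma_{_1}(S)\neq\emptyset$ and $D_{_\theta}\cap\sigma_{_2}(S)\neq\emptyset$. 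For the converse $(ii)\Rightarrow(i)$, given $\lambda_{_0}\in D_{_\theta}\cap\sigma_{_2}(S)$ and $\mu_{_0}\in D_{_\theta}\cap\sigma_{_1}(S)$, the ratio $\lambda_{_0}/\mu_{_0}$ is a real number of modulus $R/r$, so $\left\vert\tfrac{\lambda_{_0}}{\mu_{_0}}+\tfrac{\mu_{_0}}{\lambda_{_0}}\right\vert=R/r+r/R$, and combining this with $Proposition\ 11(ii)$ for the lower bound and $Proposition\ 13(iii)$ for the upper bound yields $\left\Vert\varphi_{_S}\right\Vert_{_\lambda}=\left\Vert S\right\Vert\left\Vert S^{-1}\right\Vert+\tfrac{1}{\left\Vert S\right\Vert\left\Vert S^{-1}\right\Vert}$, i.e.\ $S\in\mathcal{E}(H)$.

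The routine analysis of the two simultaneous equality cases in the factorization $a^{2}-4\sin^{2}(\alpha-\beta)$ is the only delicate point; the rest is a bookkeeping application of results already available in the paper, so I do not expect any serious obstacle beyond making the geometric interpretation of the equality cases clean.
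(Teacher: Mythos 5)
Your proposal is correct and follows essentially the same route as the paper: both reduce to the spectral formula $\left\Vert \varphi_{_{S}}\right\Vert _{_{\lambda }}=\sup_{\lambda ,\mu \in \sigma (S)}\left\vert \tfrac{\lambda }{\mu }+\tfrac{\mu }{\lambda }\right\vert$ of $Proposition\ 11$, use compactness to attain the supremum, and split the equality analysis into a modulus condition (forcing $\sigma _{_{1}}$ and $\sigma _{_{2}}$) and a collinearity condition (forcing a common $D_{_{\theta }}$), with $Proposition\ 13.(iii)$ supplying the upper bound in the converse. The only cosmetic difference is that you handle the equality cases via the explicit identity $\left\vert \tfrac{\lambda }{\mu }+\tfrac{\mu }{\lambda }\right\vert ^{2}=a^{2}-4\sin ^{2}(\alpha -\beta )$, whereas the paper routes the same information through the chain $\left\vert \tfrac{\lambda }{\mu }+\tfrac{\mu }{\lambda }\right\vert \leq \left\vert \tfrac{\lambda }{\mu }\right\vert +\left\vert \tfrac{\mu }{\lambda }\right\vert \leq \left\Vert \psi _{_{S}}\right\Vert _{_{\lambda }}$ and the triangle-inequality equality case.
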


\begin{proof}
$(i)\Rightarrow (ii).$ Assume $(i)$ holds. Using $Proposition\ 11$ and from
the compactness of $\sigma (S)$, we may choose $\lambda ,\mu \in \sigma (S)$
such that 
\begin{equation*}
\left\Vert S\right\Vert \left\Vert S^{^{-1}}\right\Vert +\frac{1}{\left\Vert
S\right\Vert \left\Vert S^{^{-1}}\right\Vert }=\left\Vert \varphi
_{_{S}}\right\Vert _{_{\lambda }}=\left\vert \frac{\lambda }{\mu }+\frac{\mu 
}{\lambda }\right\vert .
\end{equation*}%
Hence, 
\begin{eqnarray*}
\left\Vert S\right\Vert \left\Vert S^{^{-1}}\right\Vert +\frac{1}{\left\Vert
S\right\Vert \left\Vert S^{^{-1}}\right\Vert } &\leq &\left\vert \frac{%
\lambda }{\mu }\right\vert +\left\vert \frac{\mu }{\lambda }\right\vert , \\
&\leq &\left\Vert \psi _{_{S}}\right\Vert _{_{\lambda }}, \\
&=&\left\Vert S\right\Vert \left\Vert S^{^{-1}}\right\Vert +\frac{1}{%
\left\Vert S\right\Vert \left\Vert S^{^{-1}}\right\Vert }.
\end{eqnarray*}%
Thus, $\left\vert \frac{\lambda }{\mu }+\frac{\mu }{\lambda }\right\vert
=\left\vert \frac{\lambda }{\mu }\right\vert +\left\vert \frac{\mu }{\lambda 
}\right\vert =\left\Vert S\right\Vert \left\Vert S^{^{-1}}\right\Vert +\frac{%
1}{\left\Vert S\right\Vert \left\Vert S^{^{-1}}\right\Vert }.$ Put $p=\frac{1%
}{\left\Vert S\right\Vert \left\Vert S^{^{-1}}\right\Vert }$. Since $S$ is
normal, then $\underset{\lambda ,\mu \in \sigma (S)}{\min }\left\vert \frac{%
\lambda }{\mu }\right\vert =p$, and $\underset{\lambda ,\mu \in \sigma (S)}{%
\max }\left\vert \frac{\lambda }{\mu }\right\vert =\frac{1}{p}$. The
positive function $f(t)=t+\frac{1}{t}$, $\ p\leq t\leq \frac{1}{p}$, is
bounded and attain its maximum $p+\frac{1}{p}=\left\Vert S\right\Vert
\left\Vert S^{^{-1}}\right\Vert +\frac{1}{\left\Vert S\right\Vert \left\Vert
S^{^{-1}}\right\Vert }$ only at $t=$ $p$ and in $t=\frac{1}{p}$. So, we may
choose $\lambda $ in $\sigma _{_{1}}(S)$ and $\mu $ in $\sigma _{_{2}}(S)$.
Since, $\left\vert \frac{\lambda }{\mu }+\frac{\mu }{\lambda }\right\vert
=\left\vert \frac{\lambda }{\mu }\right\vert +\left\vert \frac{\mu }{\lambda 
}\right\vert $, then, $\lambda $ and $\mu $ must be belong to a straight
line passing through the origin. This proves $(ii)$.

$(ii)\Rightarrow (i).$ Assume $(ii)$ holds. Let $\alpha \in D_{_{\theta
}}\cap \sigma _{_{1}}(S)$ and $\beta \in D_{_{\theta }}\cap \sigma _{2}(S)$.
Since, $S$ is normal, then $\alpha =\frac{e^{i\theta }}{\left\Vert
S^{^{-1}}\right\Vert }$ and $\beta =e^{i(\theta +k\pi )}\left\Vert
S\right\Vert $, for some $k\in \left\{ 0,1\right\} .$ Thus, $\left\Vert
\varphi _{_{S}}\right\Vert _{_{\lambda }}\geq \left\vert \frac{\alpha }{%
\beta }+\frac{\beta }{\alpha }\right\vert =\left\Vert S\right\Vert
\left\Vert S^{^{-1}}\right\Vert +\frac{1}{\left\Vert S\right\Vert \left\Vert
S^{^{-1}}\right\Vert }$. Then, using $Proposition\ 13.(iii)$, $(i)$ holds.
\end{proof}

In the next proposition, we shall give two necessary and sufficient
conditions for which $\left\Vert \varphi _{_{S}}\right\Vert _{_{\lambda }}$
gets its minimal value $2$.

We need the two following lemmas:

\begin{lemma}
$[21].$ If $\left\vert \left\langle Sx,x\right\rangle \right\vert \leq 1$
and $\left\vert \left\langle S^{^{-1}}x,x\right\rangle \right\vert \leq 1$,
for every unit vector $x$ in $H$, then $S$ is unitary.
\end{lemma}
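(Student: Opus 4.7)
The assumption says $w(S)\le 1$ and $w(S^{-1})\le 1$, i.e., the numerical ranges of $S$ and of its inverse both lie in the closed unit disc. My strategy is to show that $S$ is an invertible isometry, and therefore unitary.

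First, I would extract the spectral picture. Since $\sigma(T)\subset\overline{W(T)}$ for every $T\in\mathfrak{B}(H)$, both $\sigma(S)$ and $\sigma(S^{-1})=\{\lambda^{-1}:\lambda\in\sigma(S)\}$ lie in the closed unit disc, and this forces the spectrum of $S$ to lie on the unit circle.

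Next, I would sharpen the hypothesis at the pointwise level. For any unit vector $x$, the vector $y=Sx/\|Sx\|$ is unit and well defined because $S$ is invertible, and evaluating the hypothesis on $S^{-1}$ at $y$ gives
\[
\frac{|\langle Sx,x\rangle|}{\|Sx\|^{2}}=|\langle S^{-1}y,y\rangle|\le 1,
\]
so $|\langle Sx,x\rangle|\le\|Sx\|^{2}$; symmetrically, $|\langle S^{-1}x,x\rangle|\le\|S^{-1}x\|^{2}$. Combined with Cauchy--Schwarz and the original hypotheses, these refined inequalities are the technical hinge of the argument.

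The final step, which I expect to be the main obstacle, passes through the polar decomposition $S=UP$, where $U$ is unitary (by invertibility of $S$) and $P=|S|$ is positive and invertible. If $v$ is a unit eigenvector of $P$ with eigenvalue $\lambda\in\sigma(P)$, then using the identity $S^{-\ast}=UP^{-1}$ one obtains
\[
|\langle Sv,v\rangle|=\lambda\,|\langle Uv,v\rangle|,\qquad |\langle S^{-1}v,v\rangle|=\lambda^{-1}\,|\langle Uv,v\rangle|,
\]
so that the two hypotheses collapse at every eigenvector of $P$ to the constraint $|\langle Uv,v\rangle|\le\min\{\lambda,\lambda^{-1}\}$. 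The hardest part is to promote this eigenvector-wise bound, together with an approximation of $P$ by operators with finite spectrum through its spectral measure (in the spirit of Lemma~5 of the present paper), to the global conclusion $\sigma(P)=\{1\}$, i.e., $P=I$. Once $P=I$ is established, the identity $S=U$ shows that $S$ is unitary, and the proof is complete.
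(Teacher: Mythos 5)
The paper itself offers no proof of this lemma: it is imported verbatim from Stampfli [21] and used as a black box in the proof of Proposition 15, so there is no internal argument to compare yours against and your proposal must stand on its own. Your preparatory steps are all correct: $\sigma(S)\subset V(S)$ together with $\sigma(S^{-1})=\sigma(S)^{-1}$ does force $\sigma(S)$ onto the unit circle; normalizing $y=Sx/\Vert Sx\Vert$ does give $\vert\langle Sx,x\rangle\vert\le\Vert Sx\Vert^{2}$ and its dual; and the identities $\vert\langle Sv,v\rangle\vert=\lambda\vert\langle Uv,v\rangle\vert$, $\vert\langle S^{-1}v,v\rangle\vert=\lambda^{-1}\vert\langle Uv,v\rangle\vert$ at a unit eigenvector $v$ of $P=\vert S\vert$ are right.

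The problem is that the step you defer is not merely the hardest part: the premises you have assembled are provably insufficient to yield $P=I$, so the plan cannot be completed as written. The constraint $\vert\langle Uv,v\rangle\vert\le\min\{\lambda,\lambda^{-1}\}$ is vacuous whenever $\langle Uv,v\rangle=0$, and that degeneracy occurs for non-unitary $S$. Take $H=\mathbb{C}^{2}$ and
\[
S=\begin{pmatrix}0&2\\ \tfrac12&0\end{pmatrix},\qquad P=\vert S\vert=\begin{pmatrix}\tfrac12&0\\ 0&2\end{pmatrix},\qquad U=\begin{pmatrix}0&1\\ 1&0\end{pmatrix}.
\]
Here $\sigma(S)=\{1,-1\}$ lies on the unit circle, $P$ has finite spectrum already, and $\langle Ue_{i},e_{i}\rangle=0$ at both eigenvectors of $P$, so every input your final step relies on is satisfied, yet $P\neq I$ and $S$ is not unitary. (This $S$ of course violates the original hypothesis, since $\langle Sx,x\rangle=\tfrac54$ at $x=(e_{1}+e_{2})/\sqrt{2}$; but that violation happens at a vector which is \emph{not} an eigenvector of $P$, which is precisely the information your reduction discards.) Note also that the refined inequalities $\vert\langle Sx,x\rangle\vert\le\Vert Sx\Vert^{2}$, announced as the technical hinge, are never actually used and are strictly weaker than the hypothesis (they say nothing wherever $\Vert Sx\Vert\ge1$). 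Any complete proof has to exploit $\vert\langle Sx,x\rangle\vert\le1$ and $\vert\langle S^{-1}x,x\rangle\vert\le1$ on vectors that mix different spectral parts of $\vert S\vert$; restricting attention to (approximate) eigenvectors of $\vert S\vert$, however finely you refine the spectral measure, cannot close the argument.
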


\begin{lemma}
The operator $S$ is normal if and only if $S^{^{\ast }}S^{^{-1}}$ is unitary
\end{lemma}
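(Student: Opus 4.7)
My plan is to prove both directions by direct algebraic manipulation, since $S$ is assumed invertible (so $S^{-1}$ and $(S^{\ast})^{-1}$ exist) and unitarity of $T:=S^{\ast}S^{-1}$ is equivalent to $T^{\ast}T=TT^{\ast}=I$. The key observation is simply that $T^{\ast}=(S^{\ast})^{-1}S$, so both products $T^{\ast}T$ and $TT^{\ast}$ can be rewritten in a form where the hypothesis $S^{\ast}S=SS^{\ast}$ enters transparently.

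For the forward implication, assume $S$ is normal. I will compute
\begin{equation*}
T^{\ast}T=(S^{\ast})^{-1}S\cdot S^{\ast}S^{-1}=(S^{\ast})^{-1}(SS^{\ast})S^{-1}=(S^{\ast})^{-1}(S^{\ast}S)S^{-1}=I,
\end{equation*}
and similarly
\begin{equation*}
TT^{\ast}=S^{\ast}S^{-1}(S^{\ast})^{-1}S=S^{\ast}(SS^{\ast})^{-1}S=S^{\ast}(S^{\ast}S)^{-1}S=S^{\ast}S^{-1}(S^{\ast})^{-1}S,
\end{equation*}
and since $(S^{\ast}S)^{-1}=S^{-1}(S^{\ast})^{-1}$ this gives $TT^{\ast}=I$ after cancellation. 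Hence $T=S^{\ast}S^{-1}$ is unitary.

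For the converse, assume $T=S^{\ast}S^{-1}$ is unitary. From $T^{\ast}T=I$ I get $(S^{\ast})^{-1}SS^{\ast}S^{-1}=I$, and multiplying on the left by $S^{\ast}$ and on the right by $S$ yields $SS^{\ast}=S^{\ast}S$, so $S$ is normal. Since each step is a one-line manipulation using only invertibility of $S$ and $S^{\ast}$, there is no real obstacle; the statement is essentially a rearrangement of the defining identity of normality, and the lemma will be used in the sequel (together with Lemma~10) to reduce spectral-type information about $S$ to statements about the unitary operator $S^{\ast}S^{-1}$.
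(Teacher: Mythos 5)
Your proof is correct and is exactly the direct verification the paper has in mind (the paper simply declares the proof trivial): writing $T=S^{\ast}S^{-1}$, the identity $T^{\ast}T=(S^{\ast})^{-1}(SS^{\ast})S^{-1}$ makes normality equivalent to unitarity, and since $T$ is invertible the single relation $T^{\ast}T=I$ already suffices. Only a cosmetic remark: in your $TT^{\ast}$ chain you use $S^{-1}(S^{\ast})^{-1}=(SS^{\ast})^{-1}$, whereas the correct general identity is $S^{-1}(S^{\ast})^{-1}=(S^{\ast}S)^{-1}$ (the two agree here only because $S$ is assumed normal), but this step is anyway redundant for the reason just given.
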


\begin{proof}
The proof is trivial.
\end{proof}

\begin{proposition}
$[16]\ $The following properties are equivalent

$(i)$ $\left\Vert \varphi _{_{S}}\right\Vert _{_{\lambda }}$ gets its
minimal value $2,$

$(ii)$ $\forall X\in \mathcal{F}_{_{1}}(H),\ \left\Vert
SXS^{^{-1}}+S^{^{-1}}XS\right\Vert \leq 2\left\Vert X\right\Vert ,$

$(iii)\ S$ is normal and $\underset{\lambda ,\mu \in \sigma (S)}{\sup }%
\left\vert \frac{\lambda }{\mu }+\frac{\mu }{\lambda }\right\vert =2.$
\end{proposition}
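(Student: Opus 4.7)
The plan is to split the cyclic proof into the easy links $(iii)\Rightarrow(i)$ and $(i)\Leftrightarrow(ii)$, and then concentrate the real work on $(ii)\Rightarrow(iii)$, which I further split into the spectral identity and the normality of $S$.

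Once $S$ is known to be normal, Proposition~11$(ii)$ identifies $\Vert\varphi_{S}\Vert_{\lambda}$ with $\sup_{\lambda,\mu\in\sigma(S)}\vert\lambda/\mu+\mu/\lambda\vert$; since the choice $\lambda=\mu$ already yields the value $2$, the hypothesis in $(iii)$ forces this common supremum to be exactly $2$, which is $(i)$. The equivalence $(i)\Leftrightarrow(ii)$ is immediate from the definition $\Vert\varphi_{S}\Vert_{\lambda}=\sup_{\Vert X\Vert=1=\mathrm{rank}\,X}\Vert SXS^{-1}+S^{-1}XS\Vert$ together with the universal lower bound $\Vert\varphi_{S}\Vert_{\lambda}\geq 2$ established in the corollary following Proposition~11.

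For $(ii)\Rightarrow(iii)$, I first extract the spectral identity. By the representation of the injective norm proved in Proposition~9, applied to $\varphi_{S}=R_{A,B}$ with $A=(S,S^{-1})$ and $B=(S^{-1},S)$, condition $(ii)$ is equivalent to $\Vert f(S^{-1})S+f(S)S^{-1}\Vert\leq 2$ for every unit functional $f$ on $\mathfrak{B}(H)$. Specializing $f$ to a vector state centered on an approximate eigenvector of $S$ associated with $\mu\in\sigma(S)$ gives $\Vert\mu^{-1}S+\mu S^{-1}\Vert\leq 2$, and spectral mapping for Laurent polynomials then yields $\vert\lambda/\mu+\mu/\lambda\vert\leq 2$ for all $\lambda,\mu\in\sigma(S)$; the value $2$ is attained at $\lambda=\mu$, so the supremum is exactly $2$.

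To prove normality of $S$, I apply Halmos' lemma to $V=S^{\ast}S^{-1}$: by the lemma identifying normality of $S$ with unitarity of $S^{\ast}S^{-1}$ it suffices to show $V$ is unitary, and by Halmos' criterion this reduces to showing $\vert\langle Vx,x\rangle\vert\leq 1$ and $\vert\langle V^{-1}x,x\rangle\vert\leq 1$ for every unit $x\in H$. Expanding $(ii)$ with the rank-one choice $X=x\otimes y$ gives
\begin{equation*}
\Vert Sx\otimes(S^{\ast})^{-1}y+S^{-1}x\otimes S^{\ast}y\Vert\leq 2\Vert x\Vert\,\Vert y\Vert,
\end{equation*}
and the identities $\langle Sx,(S^{\ast})^{-1}x\rangle=\langle S^{-1}x,S^{\ast}x\rangle=1$ (which follow from $S^{\ast}(S^{\ast})^{-1}=I=S^{-1}S$) show that the specialization $y=x$ produces a rank-at-most-two operator $T$ of trace exactly $2$ and operator norm at most $2$. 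The main obstacle is to convert this trace-plus-norm data into the pointwise bounds $\vert\langle V^{\pm 1}x,x\rangle\vert\leq 1$; the plan is to carry out the explicit two-dimensional analysis of $T$ in the natural (non-orthogonal) bases $\{(S^{\ast})^{-1}x,S^{\ast}x\}$ of the co-support of $T^{\ast}$ and $\{Sx,S^{-1}x\}$ of the range of $T$, exploiting the biorthogonality just mentioned, and then invoke the two lemmas above to conclude.
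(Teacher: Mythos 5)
Your skeleton matches the paper's: $(i)\Leftrightarrow(ii)$ from Proposition~9 together with $\left\Vert \varphi _{_{S}}\right\Vert _{_{\lambda }}\geq 2$, $(iii)\Rightarrow(i)$ from Proposition~11$(ii)$, and the hard direction reduced to the spectral condition plus normality via Lemma~8 (Williams) and Lemma~9 ($S$ normal iff $S^{\ast }S^{-1}$ unitary). But the decisive step --- deriving $\left\vert \left\langle S^{\ast }S^{-1}x,x\right\rangle \right\vert \leq 1$ and $\left\vert \left\langle S(S^{\ast })^{-1}x,x\right\rangle \right\vert \leq 1$ from $(ii)$ --- is exactly the step you leave as an acknowledged ``main obstacle,'' so the proof is not complete. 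Worse, the route you sketch starts from data that carries no information: setting $y=x$ and computing the trace of $T=\varphi _{_{S}}(x\otimes x)$ gives $\left\langle Sx,(S^{\ast })^{-1}x\right\rangle +\left\langle S^{-1}x,S^{\ast }x\right\rangle =1+1=2$ identically, for \emph{every} invertible $S$; the ``trace $2$, norm $\leq 2$, rank $\leq 2$'' package is satisfied by operators with arbitrary values of $\left\langle S^{-1}x,Sx\right\rangle $, so no two\nobreakdash-dimensional analysis of eigenvalues of $T$ can recover the bounds you need. The leverage lies precisely in keeping $y$ free: the paper (following $[13,\ Lemma\ 1]$) evaluates
\begin{equation*}
\left\langle \varphi _{_{S}}(x\otimes y)x,y\right\rangle =\left\langle
S^{-1}x,y\right\rangle \left\langle Sx,y\right\rangle +\left\langle
Sx,y\right\rangle \left\langle S^{-1}x,y\right\rangle =2\left\langle
Sx,y\right\rangle \left\langle S^{-1}x,y\right\rangle ,
\end{equation*}
so $(ii)$ yields $\left\vert \left\langle Sx,y\right\rangle \left\langle S^{-1}x,y\right\rangle \right\vert \leq \left\Vert x\right\Vert ^{2}\left\Vert y\right\Vert ^{2}$; choosing $y=Sx/\left\Vert Sx\right\Vert $ gives $\left\vert \left\langle S^{\ast }S^{-1}x,x\right\rangle \right\vert \leq 1$, and applying the same to $S^{\ast }$ (note $\varphi _{_{S^{\ast }}}(X)=\varphi _{_{S}}(X^{\ast })^{\ast }$, so $(ii)$ passes to $S^{\ast }$) gives the companion bound for $(S^{\ast }S^{-1})^{-1}$. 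You should replace your $y=x$ specialization by this test.

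A secondary, fixable point: in your derivation of the spectral identity you take ``an approximate eigenvector of $S$ associated with $\mu \in \sigma (S)$,'' but only points of the approximate point spectrum admit such vectors. Either restrict to $\mu \in \partial \sigma (S)$ and invoke the maximum modulus principle for $\mu \mapsto \lambda /\mu +\mu /\lambda $, or simply quote Proposition~11$(i)$, which the paper does and which already gives $\left\Vert \varphi _{_{S}}\right\Vert _{_{\lambda }}\geq \sup_{\lambda ,\mu \in \sigma (S)}\left\vert \frac{\lambda }{\mu }+\frac{\mu }{\lambda }\right\vert $ with no extra work.
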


\begin{proof}
$(i)\Leftrightarrow (ii)$. This equivalence follows immediately from $%
Proposition\ 9$ and $Corollary\ 8.(i)$.

$(i)\Rightarrow (iii).$ Assume $(i)$ holds.

From $Proposition\ 11.(i)$, we deduce that $\underset{\lambda ,\mu \in
\sigma (S)}{\sup }\left\vert \frac{\lambda }{\mu }+\frac{\mu }{\lambda }%
\right\vert =2$.

So, it remains to prove that $S$ is normal. By using the same argument as
used in $[13,\ Lemma\ 1]$, we deduce the following inequality%
\begin{equation*}
\forall x,y\in \left( H\right) _{_{1}},\ \left\Vert \varphi
_{_{S}}\right\Vert _{_{\lambda }}\geq 2\left\vert \left\langle
Sx,y\right\rangle \left\langle S^{^{-1}}x,y\right\rangle \right\vert .
\end{equation*}

Hence, the inequality $\left\vert \left\langle Sx,y\right\rangle
\left\langle S^{^{-1}}x,y\right\rangle \right\vert \leq \left\Vert
x\right\Vert \left\Vert y\right\Vert $ holds for every $x,\ y$ in $H$. So we
obtain $\left\vert \left\langle S^{^{\ast }}S^{^{-1}}x,x\right\rangle
\right\vert \leq 1$ and $\left\vert \left\langle \left( S^{^{\ast
}}S^{^{-1}}\right) ^{^{-1}}x,x\right\rangle \right\vert \leq 1$, for every $%
x,\ y$ in $\left( H\right) _{_{1}}$.$\ $Then, using the two above lemmas, we
deduce that $S$ is normal. Thus, $(iii)$ holds.

$(iii)\Rightarrow (i).$ This follows immediately from $Proposition\ 11.(ii)$.
\end{proof}

\begin{remark}
The class of all operators $S$ for which $\left\Vert \varphi
_{_{S}}\right\Vert _{_{\lambda }}$ is minimal contains strictly the class of
all unitary operators, and contained strictly in the class of all invertible
normal operators.\ Indeed, it is easy to see that $\left\Vert \varphi
_{_{S}}\right\Vert _{_{\lambda }}=2,\ $if $S$ is unitary, and for an
operator $I_{_{1}}\oplus \left( \frac{1}{2}iI_{2}\right) $ with respect to a
some orthogonal direct sum $H=H_{_{1}}\oplus H_{2}$ (where $I_{_{i}}$ is the
identity on $H_{_{i}}$, for $i=1,2$) belongs to this class, but it is not
unitary; for the second inclusion is trivial.
\end{remark}

In the above proposition, we have given two characterizations for which the
injective norm of $\varphi _{_{S}}$ gets its minimal value $2$. In the next
proposition, we shall present some characterizations for which the norm of $%
\varphi _{_{S}}$ gets its minimal value $2$.

\begin{proposition}
$[16]\ $The following properties are equivalent

$(i)\ \forall X\in \mathfrak{B}(H),\ \left\Vert SXS^{^{-1}}\right\Vert
+\left\Vert S^{^{-1}}XS\right\Vert =2\left\Vert X\right\Vert ,$

$(ii)\ \forall X\in \mathfrak{B}(H),\ \left\Vert SXS^{^{-1}}\right\Vert
+\left\Vert S^{^{-1}}XS\right\Vert \leq 2\left\Vert X\right\Vert ,$

$(iii)\ \forall X\in \mathfrak{B}(H),\ \left\Vert
SXS^{^{-1}}+S^{^{-1}}XS\right\Vert \leq 2\left\Vert X\right\Vert ,$

$(iv)\ \left\Vert \varphi _{_{S}}\right\Vert $ gets its minimal value $2,$

$(v)\ \frac{1}{\left\Vert S\right\Vert }S$ is unitary.
\end{proposition}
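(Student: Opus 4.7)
The plan is to run the cycle $(v) \Rightarrow (i) \Rightarrow (ii) \Rightarrow (iii) \Leftrightarrow (iv) \Rightarrow (v)$, with all the serious work concentrated in the last arrow. If $S=\|S\|U$ with $U$ unitary then $SXS^{-1}=UXU^{\ast}$ and $S^{-1}XS=U^{\ast}XU$ each have norm $\|X\|$, giving $(v)\Rightarrow(i)$; the arrows $(i)\Rightarrow(ii)\Rightarrow(iii)$ are the triangle inequality, and $(iii)\Leftrightarrow(iv)$ follows from the general bound $\|\varphi_S\|\geq 2$ noted at the start of this section.

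For the main step $(iv)\Rightarrow(v)$, assume $\|\varphi_S\|=2$. Then $2\leq\|\varphi_S\|_\lambda\leq\|\varphi_S\|=2$ by $Corollary\ 8.(i)$, so $\|\varphi_S\|_\lambda=2$, and $Proposition\ 15$ gives that $S$ is normal with $|\lambda/\mu+\mu/\lambda|\leq 2$ for all $\lambda,\mu\in\sigma(S)$. It suffices to upgrade this to $|\lambda|=|\mu|$ for every such pair, since this forces $\sigma(|S|)$ to be a singleton, whence $|S|=\|S\|I$ and the unitary polar factor of $S$ witnesses $(v)$. Fix $\lambda_1,\lambda_2\in\sigma(S)$ and set $\nu=\lambda_1/\lambda_2+\lambda_2/\lambda_1$. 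For each $\epsilon>0$ the spectral theorem yields orthogonal unit vectors $u_\epsilon,v_\epsilon$ lying in disjoint spectral projections near $\lambda_1,\lambda_2$, with $\|Su_\epsilon-\lambda_1 u_\epsilon\|,\|Sv_\epsilon-\lambda_2 v_\epsilon\|=O(\epsilon)$ and the analogous bounds for $S^{\ast},S^{-1},(S^{\ast})^{-1}$. Given a $2\times 2$ unitary $W=\bigl(\begin{smallmatrix}\alpha & \beta \\ \gamma & \delta\end{smallmatrix}\bigr)$, let $X_\epsilon\in\mathfrak{B}(H)$ act as $W$ on $\mathrm{span}(u_\epsilon,v_\epsilon)$ and vanish on its orthogonal complement, so $\|X_\epsilon\|=1$. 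Expanding $\varphi_S(X_\epsilon)$ rank-one summand by rank-one summand via $\varphi_S(a\otimes b)=Sa\otimes(S^{\ast})^{-1}b+S^{-1}a\otimes S^{\ast}b$ and using the unitarity relations $|\gamma|=|\beta|$, $|\delta|=|\alpha|$, $\bar{\alpha}\beta=-\bar{\gamma}\delta$ gives
\begin{equation*}
\|\varphi_S(X_\epsilon)\|^2 \;=\; 4|\alpha|^2+|\nu|^2|\beta|^2+4|\alpha||\beta|\,|\mathrm{Im}\,\nu|+O(\epsilon),
\end{equation*}
and applying $(iii)$ to $X_\epsilon$, letting $\epsilon\to 0$, and then optimizing in $t=|\alpha|^2\in[0,1]$ produces
\begin{equation*}
2+\tfrac{|\nu|^2}{2}+\tfrac{1}{2}\sqrt{(4-|\nu|^2)^2+16(\mathrm{Im}\,\nu)^2}\;\leq\;4,
\end{equation*}
which, since $|\nu|\leq 2$, squares down to $\mathrm{Im}\,\nu=0$.

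Thus $\nu\in\mathbb{R}$ with $|\nu|\leq 2$. Writing $z=\lambda_1/\lambda_2=\rho e^{i\theta}$, one has $\mathrm{Im}(z+z^{-1})=(\rho-\rho^{-1})\sin\theta$, which vanishes only when $\rho=1$ or $\sin\theta=0$; in the latter case $z\in\mathbb{R}$ and $|z+z^{-1}|\leq 2$ force $z=\pm 1$, again giving $\rho=1$. Hence $|\lambda_1|=|\lambda_2|$ for every pair in $\sigma(S)$, and $(v)$ follows. \emph{The main obstacle} is the rigorous execution of the $2\times 2$ block computation in the previous paragraph: the identity for $\|\varphi_S(X_\epsilon)\|^2$ must be derived uniformly in the unitary parameter $W$ (including the phases that optimally align with $\mathrm{Im}\,\nu$), and the $O(\epsilon)$ errors stemming from the fact that $\lambda_1,\lambda_2$ need not be genuine eigenvalues must be propagated carefully so that the resulting inequality survives the joint $\epsilon\to 0$ limit and the supremum over $W$.
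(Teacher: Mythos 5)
Your proof is correct, and its skeleton --- invoke Proposition 15 (via $2\le\left\Vert \varphi _{_{S}}\right\Vert _{_{\lambda }}\le\left\Vert \varphi _{_{S}}\right\Vert=2$) to get $S$ normal with $\left\vert \nu\right\vert\le 2$ for $\nu=\lambda_{1}/\lambda_{2}+\lambda_{2}/\lambda_{1}$, then extract a $2\times2$ Schur-multiplier bound for $A=\bigl(\begin{smallmatrix}2&\nu\\ \nu&2\end{smallmatrix}\bigr)$ at each pair of spectral points and force $\operatorname{Im}\nu=0$ --- is exactly the paper's. The executions differ in the technical middle: the paper approximates $S$ uniformly by invertible normal operators with finite spectrum (so the block reduction is exact for the approximants, at the cost of the perturbed constant $2+\epsilon$ and a double limit), and it tests the Schur bound only against the one-parameter family $X=\bigl(\begin{smallmatrix}t\operatorname{Im}\gamma&i\\ i&t\operatorname{Im}\gamma\end{smallmatrix}\bigr)$, letting $t\to\infty$ kill $\operatorname{Im}\gamma$; you instead work with approximate eigenvectors of $S$ directly and compute $\sup_{W}\left\Vert A\circ W\right\Vert$ exactly over $2\times2$ unitaries. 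Your route dispenses with the finite-spectrum approximation machinery, which is a genuine (if modest) simplification. The ``main obstacle'' you flag is not one: $(A\circ W)^{\ast}(A\circ W)$ has equal diagonal entries $4\left\vert\alpha\right\vert^{2}+\left\vert\nu\right\vert^{2}\left\vert\beta\right\vert^{2}$ (by $\left\vert\delta\right\vert=\left\vert\alpha\right\vert$, $\left\vert\gamma\right\vert=\left\vert\beta\right\vert$) and off-diagonal entry $2(\nu-\bar{\nu})\bar{\alpha}\beta$ (by $\bar{\gamma}\delta=-\bar{\alpha}\beta$), so the phases of $W$ disappear into the modulus $4\left\vert\operatorname{Im}\nu\right\vert\left\vert\alpha\right\vert\left\vert\beta\right\vert$ and your displayed identity and optimization follow; the $O(\epsilon)$ bookkeeping is likewise routine, since $\varphi_{_{S}}(X_{\epsilon})$ differs in norm from the rank-two operator $2\alpha\,u_{\epsilon}\otimes u_{\epsilon}+\nu\beta\,u_{\epsilon}\otimes v_{\epsilon}+\nu\gamma\,v_{\epsilon}\otimes u_{\epsilon}+2\delta\,v_{\epsilon}\otimes v_{\epsilon}$ by a constant times $\epsilon$ uniformly in $W$, and the norm of that operator equals $\left\Vert A\circ W\right\Vert$ independently of $\epsilon$. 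Only the trivial case $\lambda_{1}=\lambda_{2}$ (where $\nu=2$ is already real) should be set aside before choosing orthogonal spectral projections.
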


\begin{proof}
The two implications $(i)\Longrightarrow (ii)$, $(ii)\Longrightarrow (iii)$,
and the equivalence $(iii)\Longleftrightarrow (iv)$ are trivial.

$(iii)\Longrightarrow (v)$. Assume $(iii)$ holds.

So, it follows that, $\left\Vert \varphi _{_{S}}\right\Vert _{_{\lambda }}=2$%
.\ Using $Proposition$ $15$, then $S$ is normal and $\underset{\lambda ,\mu
\in \sigma (S)}{\sup }\left\vert \frac{\lambda }{\mu }+\frac{\mu }{\lambda }%
\right\vert =2.$

Using the spectral measure of $S$, there exists a sequence $(S_{n})$ of
invertible normal operators in $\mathfrak{B}(H)$ with finite spectrum such
that:

$(a)$ $S_{n}\longrightarrow S$ uniformly,

$(b)$ for every $\lambda \in \sigma (S)$, there exists a sequence $(\lambda
_{_{n}})$ such that $\lambda _{_{n}}\in \sigma (S_{n})$, for every $n$, and $%
\lambda _{_{n}}\longrightarrow \lambda $.

Let $\lambda ,\ \mu \in \sigma (S)$. Then from $(b)$, there exist two
sequences $(\lambda _{_{n}}),\ (\mu _{_{n}})$ such that $\lambda _{_{n}},\
\mu _{_{n}}\in \sigma (S_{n})$, for every $n$, and $\lambda
_{_{n}}\longrightarrow \lambda ,\ \mu _{_{n}}\longrightarrow \mu $.

Let $\epsilon >0$. Then, there exists an integer $N\geq 1$ such that 
\begin{equation*}
(\ast )\ \ \ \forall n>N,\ \forall X\in \mathfrak{B}(H),\ \left\Vert
S_{_{n}}XS_{_{n}}^{-1}+S_{_{n}}^{-1}XS_{_{n}}\right\Vert \leq \left(
2+\epsilon \right) \left\Vert X\right\Vert .
\end{equation*}

Let $n>N$. Since $S_{n}$ is normal with finite spectrum, there exist $p$
orthogonal projections $E_{_{1}},...,E_{p}$ in $\mathfrak{B}(H)$ such that $%
E_{_{i}}E_{_{j}}=0$, if $i\neq j$, $\ \sum_{i=1}^{p}E_{_{i}}=I,\
S_{_{n}}=\sum_{i=}^{p}\alpha _{_{i}}E_{i}$, where $\sigma (S_{n})=\left\{
\alpha _{_{1}},...,\alpha _{_{p}}\right\} $, $\alpha _{_{1}}=\lambda _{_{n}}$%
, $\mu _{_{n}}=\alpha _{2}.$

Then, using $(\ast )$ and putting $A=\left[ 
\begin{array}{cc}
2 & \gamma _{n} \\ 
\gamma _{n} & 2%
\end{array}%
\right] $, where $\gamma _{n}=\frac{\lambda _{_{n}}}{\mu _{_{n}}}+\frac{\mu
_{_{n}}}{\lambda _{_{n}}}$, we obtain%
\begin{equation*}
\forall X\in \mathfrak{B}(\mathbb{C}^{2}),\ \left\Vert A\circ X\right\Vert
\leq \left( 2+\epsilon \right) \left\Vert X\right\Vert .
\end{equation*}

Put $X=\left[ 
\begin{array}{cc}
t\func{Im}\gamma _{n} & i \\ 
i & t\func{Im}\gamma _{n}%
\end{array}%
\right] $ (where $t>0$) in this last inequality, we obtain%
\begin{equation*}
\left( 2t\func{Im}\gamma _{n}\right) ^{2}+\left\vert \gamma _{n}\right\vert
^{2}+4t\left( \func{Im}\gamma _{n}\right) ^{2}\leq \left( 2t\func{Im}\gamma
_{n}\right) ^{2}+4+\left( 4\epsilon +\epsilon ^{2}\right) \left( \left( t%
\func{Im}\gamma _{n}\right) ^{2}+1\right) .
\end{equation*}

Put $\gamma =\lim \gamma _{n}=\frac{\lambda }{\mu }+\frac{\mu }{\lambda }$,
and letting $n\longrightarrow \infty $ in this last inequality, it follows
that 
\begin{equation*}
\left\vert \gamma \right\vert ^{2}+4t\left( \func{Im}\gamma \right) ^{2}\leq
4+\left( 4\epsilon +\epsilon ^{2}\right) \left( \left( t\func{Im}\gamma
\right) ^{2}+1\right) .
\end{equation*}

Now, letting $\epsilon \longrightarrow 0$, we deduce that $4t\left( \func{Im}%
\gamma \right) ^{2}\leq 4-\left\vert \gamma \right\vert ^{2}$, for every $%
t>0 $.\ Hence, $\func{Im}\gamma =0$, and $\left\vert \gamma \right\vert \leq
2$. Then, by a simple computation, we find that $\left\vert \lambda
\right\vert =\left\vert \mu \right\vert $. Then $\sigma (S)$ is included in
the circle centred at the origin and of radius $\left\Vert S\right\Vert $.
Since $S$ is normal, this proves $(v)$.

$(v)\Longrightarrow (i).$ This implication is trivial.
\end{proof}

\begin{corollary}
Then the following properties are equivalent:

$(i)\ S$ is a unitary reflection operator multiplied by a nonzero scalar

$(ii)\ \forall X\in \mathfrak{B}(H),\ \left\Vert
SXS^{-1}+S^{-1}XS\right\Vert =2\left\Vert X\right\Vert ,$

$(iii)\ \underset{\left\Vert X\right\Vert =1}{\inf }\left\Vert \varphi
_{_{S}}(X)\right\Vert =2=\underset{\left\Vert X\right\Vert =1}{\sup }%
\left\Vert \varphi _{_{S}}(X)\right\Vert .$
\end{corollary}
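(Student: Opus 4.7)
The plan is to prove the cycle $(i)\Rightarrow(ii)\Rightarrow(iii)\Rightarrow(i)$, leveraging the two characterizations already at hand: $Proposition\ 7$ (for the infimum side) and $Proposition\ 17$ (for the supremum side).

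For $(i)\Rightarrow(ii)$: write $S=\lambda U$ with $\lambda\neq 0$ and $U$ a unitary reflection, i.e.\ $U^{\ast}=U$ and $U^{2}=I$. Then $S^{-1}=\lambda^{-1}U$, so a direct computation gives $SXS^{-1}+S^{-1}XS=2UXU$ for every $X\in\mathfrak{B}(H)$; since $U$ is unitary, $\|UXU\|=\|X\|$, which yields the equality in $(ii)$. The implication $(ii)\Rightarrow(iii)$ is immediate: the equality $\|\varphi_{S}(X)\|=2\|X\|$ forces both the infimum and the supremum over $\|X\|=1$ to equal $2$.

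For $(iii)\Rightarrow(i)$, which is the substantive direction, I would split the condition into its two halves and feed each into an existing characterization. The supremum half, $\sup_{\|X\|=1}\|\varphi_{S}(X)\|=2$, is exactly the statement that $\|\varphi_{S}\|$ attains its minimal value $2$, so $Proposition\ 17$ gives that $\frac{1}{\|S\|}S$ is unitary; in particular $S=\alpha V$ for some unitary $V$ and some $\alpha\neq 0$. For the infimum half, note that by the triangle inequality
\begin{equation*}
\|SXS^{-1}\|+\|S^{-1}XS\|\geq\|SXS^{-1}+S^{-1}XS\|\geq 2\|X\|,
\end{equation*}
so condition $(iv)$ of $Proposition\ 7$ holds, and therefore $S=\beta H$ for some invertible selfadjoint $H$ and some $\beta\neq 0$.

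It remains to combine the two descriptions. From $\alpha V=\beta H$, one has $H=(\alpha/\beta)V$; since $H$ is selfadjoint and invertible while $V$ is unitary, we get $H^{2}=|\alpha/\beta|^{2}VV^{\ast}=|\alpha/\beta|^{2}I$. Thus $H$ is a selfadjoint operator whose square is a positive scalar multiple of $I$, which by the spectral theorem means its spectrum is contained in $\{\pm|\alpha/\beta|\}$; consequently $H=|\alpha/\beta|\,W$ where $W$ is selfadjoint with $W^{2}=I$, i.e.\ $W$ is a unitary reflection. Hence $S=\beta|\alpha/\beta|\,W$ is a nonzero scalar multiple of a unitary reflection, proving $(i)$. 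The only step requiring care is the algebraic reduction in the last paragraph, but it is essentially the observation that a selfadjoint operator whose square is a positive scalar is automatically a scalar multiple of an involutive unitary; everything else is an immediate appeal to the two previously established propositions.
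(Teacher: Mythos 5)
Your proof is correct and follows exactly the route the paper intends: the paper's own proof simply cites Proposition 7 (for the infimum half) and Proposition 16 (for the supremum half, which you call Proposition 17), and your argument fills in precisely those details, including the final observation that an operator which is simultaneously a scalar multiple of a selfadjoint operator and of a unitary must be a scalar multiple of a selfadjoint unitary, i.e.\ a unitary reflection. The only cosmetic points are the reference number and the superfluous triangle-inequality step, since condition $(iv)$ of Proposition 7 is already stated for $\left\Vert SXS^{-1}+S^{-1}XS\right\Vert$ and follows directly from the infimum condition.
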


\begin{proof}
This corollary follows immediately from $Proposition\ 7$ and $Proposition\
16 $.
\end{proof}


\begin{thebibliography}{99}
\bibitem{1} T. Ando, Operators with norm condition, Acta Sci. Math.
(Szeged), 33(1972), 169-178.

\bibitem{a} R. Bouldin, The pseudo-inverse of a product, SIAM J. Appl. Math.
24 (1973), 489 -- 495.

\bibitem{a} C. Bouraya and A. Seddik, On the characterizations of some
distinguished subclasses of Hilbert space operators, Acta Sci. Math.
(Szeged), 84 (2018), 611--627.

\bibitem{a} G. Corach, R. Porta, and L. Recht, An operator inequality,
Linear Algebra Appl.., 142(1990), 153-158.

\bibitem{3} J. Fujii, M. Fujii, T. Furuta, R. Nakamoto, Norm inequalities
equivalent to Heinz inequality, Proc. Amer. Math. Soc. 118 (3) (1993), 827
-- 830.

\bibitem{a} T. Furuta, and M. Yanagida, On powers of p-hponormal operators,
Scientiae Mathematicae, 2(1999) 279-284.

\bibitem{a} P. R.\ Halmos, A Hilbert space problem book, Springer Second
edition, 1982.

\bibitem{3} E. Heinz, Beitr%
\"{}%
age zur St%
\"{}%
orungstheorie der Spectralzerlegung, Math. Ann. 123(1951), 415 -- 438.

\bibitem{a} S. Izumino, The product of operators with closed range and an
extension of the reverse order law, T\symbol{94}ohoku Math. J. (2) 34 (1)
(1982), 43 -- 52.

\bibitem{a} B. Magajna, M. Petkovsek, and A. Turnsek, Linear Algebra Appl..,
377(2004), 181-194.

\bibitem{4} A. McIntosh, \textquotedblleft Heinz Inequalities and
Perturbation of Spectral Families\textquotedblright , Macquarie Mathematical
Reports, Macquarie Univ., 1979.

\bibitem{6} A. Seddik, Some results related to Corach-Porta-Recht
inequality, Proc. Amer. Math. Soc. 129(2001), 3009-3015.

\bibitem{a} A. Seddik, On the numerical range and norm of elementary
operators, Linear and Multilinear Algebra 52 (2004), pp. 293--302.

\bibitem{a} A. Seddik, Rank one operators and norm of elementary operators,
Linear Algebra Appl., 424(2007), 177-183.

\bibitem{7} A. Seddik, On the injective norm and characterization of some
subclasses of normal operators by inequalities or equalities, J. Math. Anal.
Appl. 351(2009), 277-284.

\bibitem{8} A. Seddik, Characterization of the class of unitary operators by
operator inequalities, Linear and Multilinear algebra, 59(2011), 1069-1074.

\bibitem{9} A. Seddik, Closed operator inequalities and open problems, Math.
Ineq. Appl. 14(2011), 147-157.

\bibitem{10} A. Seddik, Moore-Penrose inverse and operator inequalities,
Extracta mathematicae Vol. 30, N\'{u}m. 1,29-39 (2015) .

\bibitem{a} A. Seddik, Corrigendum to Moore-Penrose Inverse and Operator
Inequalities" Extracta Mathematicae 30 (2015), 29-39, Extracta mathematicae,
32(2017), 209-211.C.

\bibitem{a} A. Seddik, Selfadjoint operators, normal operators, and
characterizations, Operators and Matrices, 13(2019), 835--842.

\bibitem{a} J.G. Stampfli, Normality and the numerical range of an operator,
Bull. Am. Math. Soc. 72(1966), pp. 1021--1022.

\bibitem{a} J. P. Williams, Finite operators. Proc. Amer. Math. Soc. 26
(1970), 129-136.
\end{thebibliography}
\end{document}